\title{Nonlinear weak error expansion of McKean-Vlasov stochastic differential equations}
\author[Jourdain]{Benjamin Jourdain}
\address{CERMICS, ENPC, Institut Polytechnique de Paris, INRIA, Marne-la-Vallée, France}
\email[Jourdain]{benjamin.jourdain@enpc.fr}
\author[Le]{Anh-Dung Le}
\address{CERMICS, ENPC, Institut Polytechnique de Paris, INRIA, Marne-la-Vallée, France}
\email[Le]{leanhdung1994@gmail.com}
\date{\today}
\thanks{This research benefited from the support of the ``Chaire Risques Financiers'', Fondation du Risque}
\begin{document}

\begin{abstract}
According to Talay and Tubaro \cite{talay_expansion_1990}, the weak error between the solution to a stochastic differential equation with smooth coefficients and its Euler-Maruyama scheme can be expanded in powers of the time-step. In the present paper, we generalize this result to the case when the error is measured by a smooth functional on the Wasserstein space of probability measures in place of the linear functional given by the expectation of a smooth function considered in \cite{talay_expansion_1990}. Since this does not complicate our analysis based on the master partial differential equation, we even deal with the McKean-Vlasov case when the coefficients of the stochastic differential equation may depend on its current marginal distribution.
\end{abstract}

\maketitle

\textbf{Keywords:} McKean-Vlasov SDEs, Euler-Maruyama scheme, Talay-Tubaro expansion

\textbf{2020 MSC:} 60H10, 60H35

\tableofcontents

\section{Introduction} \label{intro}
Let $\bF \coloneq \{ \cF_t \}_{ t \in \bR_+ }$ be a filtration on a probability space $(\Omega, \cA, \bP)$ and $\{ B_t \}_{t \in \bR_+}$ be a $d$-dimensional $\bF$-Brownian motion. Let $\sP_2 (\bR^d)$ be the space of Borel probability measures on $\bR^d$ with finite second moment. We endow $\sP_2 (\bR^d)$ with the Wasserstein metric $\sW_2$. We fix $T \in (0, \infty)$ and let $\bT \coloneq [0, T]$. We consider smooth functions
\begin{align}
b & \colon \bT \times \bR^d \times \sP_2 (\bR^d) \to \bR^d , \\
\sigma & \colon \bT \times \bR^d \times \sP_2 (\bR^d) \to \bR^d \otimes \bR^d . 
\end{align}

For a random variable $Y$, we denote by $\lbrbrak Y \rbrbrak$ its probability distribution. For $\nu \in \sP_2 (\bR^d)$, we consider the SDE
\begin{equation} \label{intro:main-sde}
\begin{dcases}
X_t = X_0 + \int_0^t b (s, X_s, \mu_s) \diff s + \int_0^t \sigma (s, X_s, \mu_s) \diff B_s
\qtextq{for} t \in \bT , \\
\nu = \lbrbrak X_0 \rbrbrak \qtextq{and} \mu_s \coloneq \lbrbrak X_s \rbrbrak .
\end{dcases}
\end{equation}

Above, $X_0$ is $\cF_0$-measurable. In \zcref{intro:main-sde}, the SDE involves the marginal distributions of the solution, so it is called \textit{McKean-Vlasov} or \textit{mean-field} SDEs. The existence and uniqueness of the solution to \zcref{intro:main-sde} has been investigated intensively. Its mathematical study was started by McKean \cite{mckean1966class}, who was inspired by Kac’s seminal paper \cite{kac1956foundations} about kinetic theory. For \zcref{intro:main-sde} to have a unique strong solution, it suffices that $b(s, x, \mu)$ and $\sigma(s, x, \mu)$ are Lipschitz in $(x, \mu)$ with affine growth, uniformly in $s$. See e.g. \cite[Proposition 1.2]{jourdain_nonlinear_2008} and \cite[Theorem 4.21]{carmona2018probabilistic}. There are weaker conditions on $b$ and $\sigma$ to ensure the well-posedness of \zcref{intro:main-sde}. The existence and uniqueness were obtained with one-sided global Lipschitz $b$ and global Lipschitz $\sigma$ in \cite{wang_distribution_2018,dos_reis_freidlinwentzell_2019,dos_reis_simulation_2022}; with Lyapunov-type conditions in \cite{mehri_weak_2019,hammersley_weak_2021,liu_existence_2022}; and with local Lipschitz conditions in \cite{kloeden_stochastic_2010,li_strong_2023}. The literature on \zcref{intro:main-sde} has become so vast that we invite the readers to consult the references in those mentioned papers.

The Euler-Maruyama scheme $X^n \coloneq \{ X^{n}_t \}_{ t \in \bT }$ with $n \in \bN^*$ steps for \zcref{intro:main-sde} is defined as follows. Let $\eps_n \coloneq T/n$ and $t_k \coloneq k \eps_n$ for $k \in \llbracket 0, n  \rrbracket$. Let $\tau^n_t \coloneq t_k$ if $t \in [t_k, t_{k+1})$ for some $k \in \llbracket 0, n-1  \rrbracket$. Let
\begin{equation} \label{intro:euler-scheme}
\begin{dcases}
X^{n}_t & \coloneq X_0 + \int_{0}^t b (\tau^{n}_s , X^{n}_{\tau^{n}_s}, \mu^{n}_{\tau^{n}_s}) \diff s + \int_0^t \sigma (\tau^{n}_s , X^{n}_{\tau^{n}_s}, \mu^{n}_{\tau^{n}_s}) \diff B_s , \\
\mu^{n}_{s} & \coloneq \lbrbrak X^{n}_{s} \rbrbrak .
\end{dcases}
\end{equation}

The purpose of this paper is to extend the expansion of $\bE[\phi(X^n_T)]-\bE[\phi(X_T)]$ in powers of $\frac 1n$ obtained by Talay and Tubaro \cite{talay_expansion_1990} when the coefficients of the SDE \zcref{intro:main-sde} do not depend on the marginal distribution $\mu_s$ and $\phi \colon \bR^d \to \bR$ is smooth. Our extension of \cite{talay_expansion_1990} is about the expansion of $\cU (\mu^{n}_T) - \cU (\mu_T)$ when $\cU \colon \sP_2 (\bR^d) \to \bR$ is smooth and the SDE is of McKean-Vlasov type. Our main result, \zcref{main-thm}, states that, given sufficient regularity, there exists a sequence $\{ C_i \}_{i\ge 1}$ of real constants not depending on $n$ such that
\[
\cU (\mu^{n}_T) - \cU (\mu_T) = \sum_{i=1}^m \frac{C_i}{n^i} + \cO \Big ( \frac{1}{n^{m+1}} \Big )
\qtextq{for all} m\in\bN^* ,
\]
where $\cO$ is the big O notation.

Let us give a brief review of the related literature about the expansion $\bE[\phi(X^n_T)]-\bE[\phi(X_T)]$ in powers of $\frac{1}{n}$. Talay and Tubaro \cite{talay_expansion_1990} proved this expansion when $(b, \sigma)$ are smooth with bounded derivatives of any order $\ge 1$ and $\phi$ is smooth with polynomially-growing derivatives of any order. The proof is by the Feynman-Kac PDE associated with \zcref{intro:main-sde}. Since then, many authors have generalized the result in \cite{talay_expansion_1990} in several directions. Using Malliavin calculus, Bally and Talay \cite{bally_law_1996} extended \cite{talay_expansion_1990} to the case when $\phi$ is bounded and measurable, and an additional uniform hypoellipticity condition on the infinitesimal generator of \zcref{intro:main-sde}. They proved in \cite{bally_law_1996b} the first-order expansion for the difference between the probability density functions of $X^n_t$ and $X_t$. The authors in \cite{protter_euler_1997,jacod_approximate_2005} extended \cite{talay_expansion_1990} to the case of Lévy-driven SDEs. Using the parametrix method, Konakov and Mammen \cite{konakov_edgeworth_2002} extended the expansion in \cite{bally_law_1996b} to arbitrary order. Guyon \cite{guyon_euler_2006} extended \cite{talay_expansion_1990} to the case when $\phi$ is a tempered distribution, with an additional uniform ellipticity condition on $\sigma$. The expansion has also been established for Hamiltonian systems \cite{talay2002stochastic} and kinetic Langevin diffusions \cite{kopec_weak_2015,journel_weak_2024}. We aslo refer the readers to the bibliography in those papers.

In the McKean-Vlasov case, the Euler-Maruyama scheme \zcref{intro:euler-scheme} cannot be implemented without approximating the probability measures $\{ \mu^n_{t_k} \}_{k\in \llbracket 0, n  \rrbracket}$. We do not analyze such approximations in the present paper. Nevertheless, in the case when the coefficients $(b, \sigma)$ do not depend on the measure argument and the implementation of the Euler-Maruyama scheme is straightforward, our result already generalizes to nonlinear maps $\cU\colon \sP_2  (\bR^d)\to \bR$ the Talay-Tubaro expansion which only deals with linear ones $\cU(\mu)=\int_{\bR^d}\phi(x)\diff\mu( x)$. Then $\cU(\mu_T)$ may be approximated by $\cU(\frac 1 J\sum_{j=1}^J\delta_{X^{n,j}_T})$ where $\{ X^{n,j} \}_{j\in \llbracket 1, J  \rrbracket}$ are independent copies of $X^n$. Going from standard SDEs with general maps $\cU$ to McKean-Vlasov SDEs does not complicate the proofs.

One method of approximating $\{ \mu_{t} \}_{t \in \bT}$ in \zcref{intro:main-sde} is by interacting particle systems $\{ \bar X^{m,j} \}_{j\in \llbracket 1, m  \rrbracket}$ where $\bar X^{m,j} \coloneq \{ \bar X^{m,j}_t \}_{t \in \bT}$
\begin{equation} \label{intro:IPS}
\bar X^{m, j}_t = X_0^{(j)} + \int_0^t b \Big (s, \bar X^{m, j}_s, \frac{1}{m} \sum_{j=1}^m \delta_{\bar X^{m, j}_s} \Big ) \diff s + \int_0^t \sigma \Big (s, \bar X^{m, j}_s, \frac{1}{m} \sum_{j=1}^m \delta_{\bar X^{m, j}_s} \Big ) \diff B_s^{(j)} .
\end{equation}

Above, $\{ (X_0^{(j)}, B_s^{(j)})_{s \in \bR_+} \}_{j \in \bN^*}$ are i.i.d copies of $(X_0, B_s)_{s \in \bR_+}$. With enough regularity on $(X_0, b, \sigma)$, the random empirical measure $\frac{1}{m} \sum_{j=1}^m \delta_{\bar X^{m, j}_s}$ converges to $\mu_s$ in some suitable sense as $m \to \infty$. This phenomenon is called \textit{propagation of chaos} (see e.g. \cite{burkholder_ecole_1991,chaintron_propagation_2022a,chaintron_propagation_2022b}). The paper \cite{chassagneux_weak_2022} concerns \zcref{intro:IPS} and is about the expansion of $\bE [ \cU ( \frac{1}{m} \sum_{j=1}^m \delta_{\bar X^{m, j}_T} ) ] - \cU ( \mu_T )$ in the number $m$ of particles without time discretization.

To simulate \zcref{intro:main-sde}, we need to further discretize $\bar X^{m,j}$ in time. This is accomplished in \cite{bernou_particle_2022,liu_particle_2024,frikha_convergence_2025} whose objectives are different from ours. The focus of \cite{bernou_particle_2022,liu_particle_2024} is on the convergence rate of the empirical measure $\mu^{n, m}_{t_k}$ at time $t_k$ of the time-discretized particle system to $\mu^n_{t_k}$. The focus of \cite{frikha_convergence_2025} is on estimating both the strong and the weak errors between $\mu^{n,m}_t$ and $\mu_t$. In particular, \cite[Theorem 2.3]{frikha_convergence_2025} states that there exists a constant $C \in \bR_+$ such that
\[
\sup_{t \in \bT} | \bE [\cU (\mu_t^{n, m} ) - \cU (\mu_t) ] | \le C \Big ( \frac{1}{m} +  \frac{1}{n} \Big ) .
\]

The present paper contributes in two aspects.
\begin{itemize}
\item Previous papers on Talay-Tubaro's expansion only concern the linear case when $\cU(\mu)=\int_{\bR^d}\phi(x)\diff\mu( x)$ and $(b, \sigma)$ do not depend on distribution variable. Here we work with smooth but nonlinear $\cU$ and McKean-Vlasov SDEs.

\item Our proof relies on the regularity of the master PDE \zcref{cal:master-PDE:eq2} in both time, space and measure variables. The two previous papers \cite{buckdahn_mean-field_2017,chassagneux_weak_2022} established the higher-order regularity in space and measure variables. As a by product, we establish the higher-order regularity in time for \zcref{cal:master-PDE:eq2}.
\end{itemize}

Throughout the paper, we adopt the following conventions:
\begin{itemize}[parsep=2.5pt]
\item Let $\bN$ be the set of natural numbers including $0$. We denote $\bN^* \coloneq \bN \setminus \{0\}$.

\item For $i, m \in \bN$ with $i \le m$, let $\llbracket i, m \rrbracket \coloneq \{i, i+1, \ldots, m\}$.

\item For $x, y \in \bR^d$, we denote by $x \cdot y$ their dot product. For $x, y \in \bR^d \otimes \bR^m$, we denote by $x : y$ their Frobenius inner product.

\item For $x \in \bR^d \otimes \bR^m$, we denote by $x^\top \in \bR^m \otimes \bR^d$ its transpose. For $x \in \bR^d \otimes \bR^m$ and $y \in \bR^m \otimes \bR^n$, we denote by $xy \in \bR^d \otimes \bR^n$ their matrix multiplication.

\item Let $(\bR^d)^{\otimes m}$ denote the tensor product of $m$ copies of $\bR^d$. For $x \in (\bR^d)^{\otimes m}$ and $i \in \llbracket 1, d \rrbracket^m$, we denote by $[x]_i \in \bR$ its entry.

\item Let $\sP (\bR^d)$ be the space of Borel probability measures on $\bR^d$. For a random variable $Y$, we denote by $\lbrbrak Y \rbrbrak$ its probability distribution.

\item For a measurable map $\psi \colon \bR^m \to \bR^d$ and $\mu \in \sP (\bR^m)$, we denote by $\psi_\sharp \mu \in \sP (\bR^d)$ the push-forward measure of $\mu$ through $\psi$.

\item For $(m, t) \in \bN^* \times \bR_+$, we denote the Lebesgue space $L^2 (\cF_t; \bR^m) \coloneq L^2 (\Omega, \cF_t, \bP; \bR^m)$.

\item For $q\in\bN^*$, let $\Delta^q \coloneq \{(s_1, \ldots, s_q) \in \bT^q ; s_1 \ge \cdots \ge s_q \}$ where we recall that $\bT$ is the closed interval $[0, T]$. We denote by $\intr \Delta^q$ the interior of $\Delta^q$ in $\bR^q$.

\item If $U \colon \Delta^q \times \bR^r \times \sP_2 (\bR^d) \to \bR$ is said to be continuous, we mean that it is jointly continuous in all variables.
\end{itemize}

We assume that $(\Omega, \cA, \bF, \bP)$ satisfies the usual conditions and $(\Omega, \cF_0, \bP)$ is rich enough such that $\sP_2 (\bR^m) = \{ \lbrbrak X \rbrbrak ; X \in L^2 (\cF_0; \bR^m) \}$ for every $m \in \bN^*$.

\section{Calculus on $\sP_2 (\bR^d)$} \label{calculus_on-P2}

For $i \in \{1, 2\}$, let $\pi^i \colon (\bR^d)^2 \to \bR^d$ be the projection of $(\bR^d)^2$ onto its $i$-th component. The set of \textit{transport plans} (or \textit{couplings}) between $\mu, \nu \in \sP (\bR^d)$ is defined as
\[
\Gamma (\mu, \nu) \coloneq \{ \varrho \in \sP ({\bR}^d \times {\bR}^d ) ; \mu = \pi^1_\sharp \varrho \text{ and } \nu = \pi^2_\sharp \varrho \} .
\]

The Wasserstein metric $\sW_2$ between $\mu, \nu \in \sP_2 (\bR^d)$ is defined as
\begin{equation} \label{cal:-Wp-def}
\sW_2 (\mu, \nu) \coloneq \inf_{\varrho \in \Gamma (\mu, \nu)} \Big \{ \int_{\bR^d} |x-y|^2 \diff \varrho (x, y) \Big \}^{1/2} .
\end{equation}

To properly state our results, we recall the notion of measure derivative introduced by Lions in his seminal lectures at the Collège de France \cite{lions2007}. A comprehensive presentation can be found in the monograph of Carmona and Delarue \cite{carmona_probabilistic_2018}.

\subsection{Lions derivatives}

We fix $U \colon \sP_2 (\bR^d) \to \bR$ and $\mu \in \sP_2 (\bR^d)$. The \textit{lift} $\tilde U \colon L^2 (\cF_0 ; \bR^d) \to \bR$ of $U$ is defined as $\tilde U (X) \coloneq U (\lbrbrak X \rbrbrak)$.

\begin{definition} \label{cal:ld-def}
$U$ is said to be \textit{L-differentiable} at $\mu$ if there exists $X \in L^2 (\cF_0 ; \bR^d)$ such that $\mu = \lbrbrak X \rbrbrak$ and that $\tilde U$ is Fréchet differentiable at $X$.
\end{definition}

By Riesz representation theorem, the gradient of $\tilde U$ at $X$ can be identified with a unique element $\nabla \tilde U ( X) \in L^2 (\cF_0 ; \bR^d)$. By \cite[Lemma 3.4]{alfonsi_squared_2020}, if $U$ is L-differentiable at $\mu$, then there exists a function $\partial_\mu U (\mu ; \cdot)$ that belongs to $L^2 (\bR^d, \cB (\bR^d), \mu ; \bR^d)$, that depends only on $\mu$ and such that $\nabla \tilde U ( X) = \partial_\mu U (\mu ; X)$ for any $X \in L^2 (\cF_0; \bR^d)$ with $\lbrbrak X \rbrbrak = \mu$. Here $\partial_\mu U (\mu ; X)$ means the evaluation of $\partial_\mu U (\mu ; \cdot) \colon \bR^d \to \bR^d$ at each realization of $X$.

\begin{definition} \label{cal:ld-def2}
If $U$ is L-differentiable at $\mu$, then its \textit{L-derivative} at $\mu$ is defined as $\partial_\mu U (\mu ; \cdot)$.
\end{definition}

We define higher-order Lions derivatives by induction.

\begin{definition} \label{cal:higher-ld-def}
Let $p \in \bN^*$. The map $U$ is said to be \textit{$(p+1)$-times L-differentiable} at $\mu$ if there exists a neighborhood $O$ of $\mu$ such that
\begin{itemize}
\item $U$ is $p$-times L-differentiable at every $\nu \in O$ with $p$-times L-derivative $\partial_\mu^{p} U (\nu ; \cdot) \colon (\bR^d)^{p} \to (\bR^d)^{\otimes p}$.

\item For every $x \in (\bR^d)^{p}$ and $\{ i_1, \ldots, i_{p} \} \subset \llbracket 1, d  \rrbracket$, the map $O \ni \nu \mapsto [ \partial_\mu^{p} U ]_{(i_1, \ldots, i_{p})} (\nu ; x)$ is L-differentiable at $\mu$ according to \zcref{cal:ld-def}.
\end{itemize}

In this case, the \textit{$(p+1)$-th L-derivative} of $U$ at $\mu$ is defined for all $x \in (\bR^d)^{p} , y \in \bR^d$ and $\{ i_1, \ldots, i_{p+1} \} \subset \llbracket 1, d  \rrbracket$ as
\[
\begin{multlined}[t]
[ \partial_\mu^{p+1} U ]_{(i_1, \ldots, i_{p+1})} (\mu ; x, y) \coloneq [ \partial_\mu \{ [\partial_\mu^{p} U ]_{(i_1, \ldots, i_{p})} (\cdot ; x) \} ]_{i_{p+1}} (\mu ; y) .
\end{multlined}
\]
\end{definition}

\subsection{Functions differentiable in measure}

For brevity, we introduce multi-index notation.

\begin{definition} \label{multi-index}
Let $p, q, r \in \bN$. Let $h = (h_1, \ldots, h_r) \in \bN^r, m = (m_1, \ldots, m_q) \in \bN^q$ and $\ell = (\ell_1, \ldots, \ell_p) \in \bN^p$. An ordered tuple of the form $\alpha = (p, h, m, \ell)$ is called a \textit{multi-index}. Its \textit{order} is defined as
\[
|\alpha| \coloneq p + h_1 + \ldots + h_r + 2 (m_1 + \ldots + m_q) + \ell_1+\cdots+\ell_p .
\]

For $U \colon \Delta^q \times \bR^r \times \sP_2 (\bR^d) \to \bR$, its mixed derivative at $s = (s_1, \ldots, s_q) \in \intr \Delta^q, x = (x_1, \ldots, x_r) \in \bR^r, \mu \in \sP_2 (\bR^d)$ and $v = (v_1, \ldots, v_p) \in (\bR^d)^p$, is defined as
\begin{equation} \label{multi-index:eq1}
\begin{multlined}[t]
\rD^{\alpha} U (s, x, \mu ; v) \coloneq \nabla_{v_p}^{\ell_p} \ldots \nabla_{v_1}^{\ell_1} \partial_\mu^{p} \partial_{x_r}^{h_r} \ldots \partial_{x_1}^{h_1} \partial^{m_q}_{s_q} \ldots \partial^{m_1}_{s_1} U (s, x, \mu ; v ) .
\end{multlined}
\end{equation}

If $q=0$ then $U \colon \bR^r \times \sP_2 (\bR^d) \to \bR$. If $r=0$ then $U \colon \Delta^q \times \sP_2 (\bR^d) \to \bR$. If $q=r=0$ then $U \colon \sP_2 (\bR^d) \to \bR$.
\end{definition}

In the order introduced in \zcref{multi-index}, one derivative in time is worth two derivatives in space or in measure.

\begin{definition} \label{cal:space1}
Let $q, r, k \in \bN$ and $U \colon \Delta^q \times \bR^r \times \sP_2 (\bR^d) \to \bR$. The map $U$ is said to be of class $\sM_b^k$ if, there is a constant $C>0$, for every multi-index $\alpha = (p, h, m, \ell)$ with $|\alpha| \le k$, its mixed derivative $\rD^{\alpha} U \colon (\intr \Delta^q) \times \bR^r \times \sP_2 (\bR^d) \times (\bR^d)^p \to (\bR^d)^{\otimes(p+\ell_1+\cdots+\ell_p)}$ exists and is continuous such that
\begin{itemize}
\item for every $s \in \intr \Delta^q; x, x' \in \bR^r; \mu, \mu' \in \sP_2 (\bR^d)$ and $v, v' \in (\bR^d)^p$:
\begin{equation} \label{cal:space1:ineq1}
\begin{aligned}
| \rD^{\alpha} U (s, x, \mu ; v ) | & \le C , \\
| \rD^{\alpha} U (s, x, \mu ; v ) - \rD^{\alpha} U (s, x', \mu' ; v' ) | & \le C \{ |x-x'| + |v-v'| + \sW_2(\mu, \mu') \} ,
\end{aligned}
\end{equation}

\item for every $(x, \mu, v)  \in \bR^r \times \sP_2 (\bR^d) \times (\bR^d)^p$, the map $s \mapsto \rD^{\alpha} U (s, x, \mu ; v )$ is uniformly continuous on $\intr \Delta^q$.
\end{itemize}
The map $U$ is said to be of class $\sM_b^\infty$ and called \textit{smooth} if it is of class $\sM_b^k$ for every $k \in \bN$.
\end{definition}

In \zcref{cal:space1}, $\rD^{\alpha} U$ has a unique continuous extension to $\Delta^q \times \bR^r \times \sP_2 (\bR^d) \times (\bR^d)^p$ and we always work with this extension. \zcref{cal:space1} extends naturally to the case of $\bR^m$-valued maps, i.e., $U \colon \Delta^q \times \bR^r \times \sP_2 (\bR^d) \to \bR^m$ is of class $\sM^k_b$ if its $i$-th component $[U]_i$ is of class $\sM^k_b$ for every $i \in \llbracket 1, m  \rrbracket$. The class $\sM_b^k$ in \zcref{cal:space1} is inspired by \cite[Definition 2.17]{chassagneux_weak_2022} but is required to have higher-order time derivatives. This is because we need to repeat taking time derivative in the \hyperref[main-thm:prf]{proof} of \zcref{main-thm}. We will prove in \zcref{cal:master-PDE}(2) that the solution to the master PDE inherits this extra time regularity.

We recall and establish some symmetry properties of mixed derivatives, and in particular the second-order ones.

\begin{lemma} \label{cal:Clairaut-lm}
The following statements hold:
\begin{enumerate}
\item \cite[Corollary 5.89]{carmona_probabilistic_2018} Let $U \colon \sP_2 (\bR^d) \to \bR$ be of class $\sM_b^2$. For all $\mu \in \sP_2 (\bR^d)$ and $v_1, v_2 \in \bR^d$, we have that $\nabla_{v_1} \partial_\mu U (\mu; v_1)$ is symmetric and that $\partial_\mu^2 U (\mu; v_1, v_2) = [\partial_\mu^2 U ]^\top (\mu; v_2, v_1)$.

\item Let $U \colon \bR^r \times \sP_2 (\bR^d) \to \bR$ be of class $\sM_b^2$. Then $\nabla_x U (x, \cdot)$ is L-differentiable for every $x \in \bR^r$. Moreover, $\partial_\mu \nabla_x U (x, \mu; v) = [\nabla_x \partial_\mu U]^\top (x, \mu; v)$ for all $(\mu, x, v) \in \sP_2 (\bR^d) \times \bR^r \times \bR^d$.

\item Let $U \colon \bT \times \sP_2 (\bR^d) \to \bR$ be of class $\sM^{2}_b$. Assume that $\partial_t U (t, \cdot)$ is of class $\sM^{2}_b$ for every $t \in \bT$, with the constant $C$ in \zcref{cal:space1:ineq1} uniform in $t$; and that $\partial_\mu \partial_t U$ and $\nabla_v \partial_\mu \partial_t U$ are continuous. Then $\partial_\mu U (\cdot, \mu; v)$ and $\nabla_v \partial_\mu U (\cdot, \mu; v)$ are differentiable. Moreover, $\partial_t \partial_\mu U (t, \mu; v) = \partial_\mu \partial_t U (t, \mu; v)$ and $\partial_t \nabla_v \partial_\mu U (t, \mu; v) = \nabla_v \partial_\mu \partial_t U (t, \mu; v)$ for every $(t, \mu, v) \in \bT \times \sP_2 (\bR^d) \times \bR^d$.
\end{enumerate}
\end{lemma}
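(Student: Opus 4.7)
Part (1) being cited, my plan addresses parts (2) and (3).

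For part (2), I exploit the identity that expresses the difference $U(y, \mu') - U(y, \mu)$ as an integral along a linear interpolation in the measure argument. Given $X, X' \in L^2(\cF_0; \bR^d)$ with $\lbrbrak X \rbrbrak = \mu$ and $\lbrbrak X' \rbrbrak = \mu'$, applying the fundamental theorem of calculus to $s \mapsto U(y, \lbrbrak X + s(X'-X) \rbrbrak)$ (whose $C^1$ regularity follows from $U \in \sM_b^2$ and the chain rule for the lift) yields
\begin{equation*}
U(y, \mu') - U(y, \mu) = \int_0^1 \bE[\partial_\mu U(y, \lbrbrak X + s(X'-X) \rbrbrak; X + s(X'-X)) \cdot (X'-X)] \diff s.
\end{equation*}
I then differentiate in $y$ under the integral sign; this is justified by dominated convergence using the uniform boundedness and Lipschitz continuity of $\nabla_x \partial_\mu U$ provided by $U \in \sM_b^2$. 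The ordinary chain rule gives $\nabla_y [\partial_\mu U(y, \mu; v) \cdot w] = [\nabla_x \partial_\mu U]^\top(y, \mu; v) w$, so setting $y = x$ and $X' = X + Y$ I obtain
\begin{equation*}
\nabla_x U(x, \lbrbrak X+Y \rbrbrak) - \nabla_x U(x, \lbrbrak X \rbrbrak) = \int_0^1 \bE \bigl[ [\nabla_x \partial_\mu U]^\top(x, \lbrbrak X+sY \rbrbrak; X+sY) Y \bigr] \diff s.
\end{equation*}
Expanding the integrand around $Y = 0$, it differs from $\bE[[\nabla_x \partial_\mu U]^\top(x, \mu; X) Y]$ by a quantity controlled via Cauchy--Schwarz by $C \bE[|Y|^2]^{1/2} \|Y\|_{L^2} = o(\|Y\|_{L^2})$, thanks to the Lipschitz continuity of $[\nabla_x \partial_\mu U]^\top$ from $\sM_b^2$. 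This establishes Fréchet differentiability of the lift of each component of $\nabla_x U(x, \cdot)$ at $X$ and identifies the L-derivative as claimed.

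For part (3), I use a Schwarz-type argument based on the fundamental theorem of calculus in time. For $t_0, t \in \bT$, the $\sM_b^2$ regularity of $U$ gives $U(t, \mu) - U(t_0, \mu) = \int_{t_0}^t \partial_r U(r, \mu) \diff r$. I take L-derivatives on both sides. The L-derivative passes under the integral because, by the hypothesis that $\partial_t U(t, \cdot) \in \sM_b^2$ uniformly in $t$ and that $\partial_\mu \partial_t U$ is continuous, the lift $X \mapsto \int_{t_0}^t \partial_r U(r, \lbrbrak X \rbrbrak) \diff r$ is Fréchet differentiable on $L^2(\cF_0; \bR^d)$ with gradient obtained by differentiating under the integral (dominated convergence, using the uniform bound on $\partial_\mu \partial_r U$). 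This yields
\begin{equation*}
\partial_\mu U(t, \mu; v) - \partial_\mu U(t_0, \mu; v) = \int_{t_0}^t \partial_\mu \partial_r U(r, \mu; v) \diff r.
\end{equation*}
Since the integrand is continuous in $r$, the classical FTC gives differentiability of $t \mapsto \partial_\mu U(t, \mu; v)$ on $\intr \bT$, with derivative $\partial_\mu \partial_t U(t, \mu; v)$. For the second identity, I apply $\nabla_v$ to both sides and again pass it under the integral --- now using the continuity and $\sM_b^2$-boundedness of $\nabla_v \partial_\mu \partial_t U$ --- and invoke FTC once more.

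The main obstacle is justifying the two interchanges: $\nabla_x$ under the measure-interpolation integral in part (2), and $\partial_\mu$ (then $\nabla_v \partial_\mu$) under the time integral in part (3). Both reduce to dominated-convergence arguments on the lifts, which is precisely where the uniform $\sM_b^2$-boundedness and joint continuity supplied by the hypotheses are used in an essential way; everything else is a routine application of the L-derivative chain rule and the classical FTC.
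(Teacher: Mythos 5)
Your part (2) is correct and rests on exactly the same hypotheses as the paper's argument (boundedness and Lipschitz continuity of $\nabla_x \partial_\mu U$ and of $\nabla_x U$ supplied by $\sM_b^2$); you simply replace the paper's symmetric second-difference, Schwarz-type computation by the fundamental theorem of calculus along the lifted segment $s \mapsto X + s(X'-X)$ followed by differentiation under the integral in $x$, and both routes end with the same $o(\|Y\|_{L^2})$ remainder estimate via the Lipschitz bound. That is a legitimate, slightly more direct variant.

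In part (3) there is a genuine gap. Your identity $\partial_\mu U(t,\mu;v) - \partial_\mu U(t_0,\mu;v) = \int_{t_0}^t \partial_\mu \partial_r U(r,\mu;v)\,\diff r$ is obtained by identifying two Fréchet gradients of lifts, i.e.\ two L-derivatives, and an L-derivative is only determined up to $\mu$-null sets. So your identity holds for $\mu$-a.e.\ $v$, hence by continuity in $v$ for all $v$ in the support of $\mu$, but it gives nothing for $v \notin \mathrm{supp}(\mu)$; the lemma asserts differentiability of $t \mapsto \partial_\mu U(t,\mu;v)$ and the equality $\partial_t \partial_\mu U = \partial_\mu \partial_t U$ at \emph{every} $v \in \bR^d$, where $\partial_\mu U(t,\mu;\cdot)$ is the continuous (Lipschitz) version fixed by the class $\sM_b^2$. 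The paper closes precisely this point by replacing $X$ with $X + \eps Z$, $Z$ a standard Gaussian independent of $X$, so that the law has full support and the identity holds for all $v$, and then letting $\eps \to 0$ using the Lipschitz continuity in $\sW_2$; you need this (or an equivalent full-support/density step) before applying the FTC in $t$ at a fixed $v$. A secondary, smaller point: to pass the L-derivative under $\int_{t_0}^t \partial_r U(r,\cdot)\,\diff r$, the uniform bound on $\partial_\mu \partial_r U$ that you invoke for dominated convergence is not by itself enough; you need the Lipschitz estimate in $(\mu,v)$ uniformly in $r$ to get a remainder $o(\|Y\|_{L^2})$ uniform in $r$ — this is available from the hypothesis that the $\sM_b^2$ constant of $\partial_t U(t,\cdot)$ is uniform in $t$, so it is a matter of stating the justification precisely rather than a flaw. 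Once the support issue is repaired, your FTC-in-time route, including the $\nabla_v$ step obtained by differentiating the integral identity in $v$, is a valid and in fact tidier alternative to the paper's mixed-difference argument, which also needs the classical Schwarz theorem for the $\nabla_v$ part.
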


We establish smoothness of functions of a particular form.

\begin{lemma} \label{par-form}
Let $q, k \in \bN^*$ with $k \ge 2$ and $U \colon \Delta^{q+1} \to \bR$ be of class $\sM_b^k$. We define $V \colon \Delta^{q} \to \bR$ by $V (s) \coloneq \int_0^{s_q} U (s, t) \diff t$ for every $s=(s_1, \ldots, s_q) \in \Delta^q$. Then $V$ is of class $\sM_b^k$.
\end{lemma}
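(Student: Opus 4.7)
My plan is to eliminate the $s_q$-dependent upper limit via the affine change of variables $t = s_q u$, $u \in [0,1]$, which turns $V$ into a product of $s_q$ and an integral with fixed bounds:
\[
V(s) = s_q W(s), \qquad W(s) \coloneq \int_0^1 U(s_1, \ldots, s_q, s_q u) \diff u.
\]
Since $0 \le s_q u \le s_q$, the point $(s_1, \ldots, s_q, s_q u)$ lies in $\Delta^{q+1}$ for every $u \in [0,1]$, so the integrand is well-defined. It then suffices to prove that $W$ is of class $\sM_b^k$, since $V = s_q \cdot W$ will inherit the class via the Leibniz rule.

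Setting $\tilde U(s, u) \coloneq U(s_1, \ldots, s_q, s_q u)$, I would establish by induction on $|\alpha|$ that, for every multi-index $\alpha = (0, \emptyset, m, \emptyset)$ with $|\alpha| = 2(m_1 + \cdots + m_q) \le k$, the chain rule produces
\[
\rD^\alpha_s \tilde U(s, u) = \sum_{\beta, j} c_{\beta, j}\, u^j \, \rD^\beta U(s_1, \ldots, s_q, s_q u),
\]
a finite sum indexed by multi-indices $\beta$ on $\Delta^{q+1}$ with $|\beta| = |\alpha|$ and integers $j$ with $0 \le j \le m_q$. Each term is jointly continuous in $(s, u) \in \Delta^q \times [0, 1]$ (composition of continuous maps, using that $\rD^\beta U$ extends continuously to $\Delta^{q+1}$ by hypothesis) and is bounded by the constant provided in \zcref{cal:space1:ineq1}. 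Dominated convergence applied to difference quotients then yields
\[
\rD^\alpha W(s) = \int_0^1 \rD^\alpha_s \tilde U(s, u) \diff u,
\]
which is bounded and jointly continuous on $\Delta^q$, hence uniformly continuous on $\intr \Delta^q$ by compactness. The Lipschitz conditions in \zcref{cal:space1:ineq1} are vacuous here because $W$ carries no space, measure, or L-derivative arguments. This shows $W \in \sM_b^k$.

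To conclude, I apply the Leibniz rule to $V = s_q \cdot W$: each $\rD^\alpha V(s)$ is the sum of $s_q \, \rD^\alpha W(s)$ and the lower-order term $m_q \, \rD^{\alpha'} W(s)$ obtained when one $\partial_{s_q}$ falls on the factor $s_q$, with $\alpha'$ obtained from $\alpha$ by reducing $m_q$ by $1$. Both summands are bounded and continuous on $\Delta^q$, so $V$ is of class $\sM_b^k$.

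The main bookkeeping will lie in formalizing the chain-rule induction that produces the expansion of $\rD^\alpha_s \tilde U$, keeping track of the polynomial factors $u^j$ and of the decomposition of each $\beta$ into derivatives in the $q$-th and $(q+1)$-th time slots. The hypothesis $k \ge 2$ is precisely what makes the first time derivative of $V$ (of order $2$ in the $\sM_b$ grading) meaningful: through the chain rule it invokes first-order time derivatives of $U$, which are themselves order-$2$ objects, hence available only from $\sM_b^2$ onward.
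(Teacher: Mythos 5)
Your proposal is correct. The paper itself omits the proof of this lemma (it is declared straightforward), and the natural route it has in mind is direct differentiation of $V(s)=\int_0^{s_q}U(s,t)\diff t$: derivatives in $s_1,\dots,s_{q-1}$ pass under the integral, while each $\partial_{s_q}$ produces, via the Leibniz rule for a variable upper limit, a boundary term $\rD^\beta U(s,s_q)$ of strictly smaller order plus an integral of $\partial_{s_q}U$; since one time derivative costs $2$ in the $\sM_b$ grading and the derivatives of $U$ appearing have order at most $|\alpha|$, everything stays within the class $\sM_b^k$, exactly as in your weight count. Your substitution $t=s_qu$ is a genuinely different (and equally valid) bookkeeping device: it trades the boundary terms for the explicit factor $s_q$, handled by the product rule, and reduces everything to differentiation under an integral with fixed limits, with domination immediate from the uniform bounds in \zcref{cal:space1:ineq1}; the continuity and uniform continuity arguments you give (continuous extension of $\rD^\beta U$ to the compact $\Delta^{q+1}$, hence uniform continuity of the composed integrand) are sound. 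One small point to make explicit in the bookkeeping: your chain-rule expansion of $\rD^\alpha_s\tilde U$ uses equality of mixed time partials such as $\partial_{s_q}\partial_{s_{q+1}}U=\partial_{s_{q+1}}\partial_{s_q}U$, whereas \zcref{cal:space1} only furnishes derivatives in the canonical order $\partial_{s_{q+1}}^{m_{q+1}}\partial_{s_q}^{m_q}\cdots\partial_{s_1}^{m_1}$; this is harmless because the canonical mixed derivatives are continuous and the first-order partials exist, so the Schwarz-type theorem gives the commutation, but it deserves a sentence (the direct variable-upper-limit route only ever produces canonical-order derivatives and avoids this issue altogether). Your closing remark about $k\ge 2$ is not quite the reason the hypothesis is there: for $k\in\{0,1\}$ the time-derivative conditions are vacuous and the statement would hold trivially; the assumption simply matches how the lemma is used.
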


The proof of \zcref{par-form} is straightforward and thus omitted. We establish an analogue of Leibniz integral rule.

\begin{lemma} \label{cal:leibniz}
Let $q, r, k \in \bN$ and $U \colon \Omega \times \Delta^q \times \bR^r \times \sP_2 (\bR^d) \to \bR$ such that
\begin{itemize}
\item For each $(s, x, \mu) \in \Delta^q \times \bR^r \times \sP_2 (\bR^d)$, the map $U (\cdot, s, x, \mu) \colon \Omega \to \bR$ is measurable.
\item For each $\omega \in \Omega$, the map $U (\omega, \cdot) \colon \Delta^q \times \bR^r \times \sP_2 (\bR^d) \to \bR$ is of class $\sM_b^k$ with the constant $C_\omega$ in \zcref{cal:space1:ineq1}.
\item The map $\Omega \ni \omega \mapsto C_\omega$ is $\bP$-integrable.
\end{itemize}

We define $V \colon \Delta^q \times \bR^r \times \sP_2 (\bR^d) \to \bR$ by $V (s, x, \mu) \coloneq \bE [U(\cdot, s, x, \mu)]$. Then $V$ is of class $\sM_b^k$ and for every multi-index $(p,h,m,\ell)$ with $|(p,h,m,\ell)| \le k$
\begin{equation} \label{cal:leibniz:eq1}
\rD^{(p, h, m, \ell)} V (s, x, \mu; v) = \bE [ \rD^{(p, h, m, \ell)} U(\cdot, s, x, \mu ; v)] .
\end{equation}
\end{lemma}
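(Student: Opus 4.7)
} The plan is to induct on the multi-index order $|\alpha|$, showing at each step that the single derivative commutes with the expectation and that the resulting map still meets the assumptions of \zcref{cal:leibniz} with a new integrable bound. This way the three types of derivatives (time, space, measure) are treated one at a time, and iterating up to $k$ handles every mixed derivative appearing in \zcref{multi-index:eq1}.

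For the base case $|\alpha|=0$, continuity and the uniform bound of $V$ follow from dominated convergence applied to the Lipschitz estimate in \zcref{cal:space1:ineq1} together with the $\bP$-integrability of $\omega\mapsto C_\omega$. For a space derivative $\partial_{x_j}$, I would write the difference quotient $h^{-1}\{U(\cdot,s,x+he_j,\mu)-U(\cdot,s,x,\mu)\}$, dominate it by $C_\omega$ using the Lipschitz bound, and pass to the limit inside the expectation; continuity of $\partial_{x_j}V$ and its own Lipschitz/bound properties then follow from the inductive hypothesis applied to $\partial_{x_j} U$. For a time derivative on the interior of $\Delta^q$ I would use the fundamental theorem of calculus $U(\cdot,s+te_i,x,\mu)-U(\cdot,s,x,\mu)=\int_0^t\partial_{s_i}U(\cdot,s+ue_i,x,\mu)\diff u$, apply Fubini, then send $t\to 0$ by dominated convergence; the required uniform continuity in $s$ of $\partial_{s_i}V$ is again inherited from the integrable envelope $C_\omega$.

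The main obstacle is the L-derivative, because differentiation in measure is defined through the lift and a Fréchet limit in $L^2(\cF_0;\bR^d)$, not through a pointwise increment. I would enlarge the probability space to $(\Omega\times\Omega_0,\cA\otimes\cF_0,\bP\otimes\bP_0)$ so that the randomness of $U$ and the lifting variable $X$ are independent, and write $\tilde V(X):=V(s,x,\lbrbrak X\rbrbrak)=\bE_{\bP}[\tilde U(\cdot,s,x,X)]$. For a perturbation $H\in L^2(\cF_0;\bR^d)$, Jensen's inequality and the fundamental theorem of calculus applied to $t\mapsto\tilde U(\omega,s,x,X+tH)$ yield, for every $\omega$,
\[
\tilde U(\omega,s,x,X+H)-\tilde U(\omega,s,x,X)=\bE_{\bP_0}\!\left[\partial_\mu U(\omega,s,x,\lbrbrak X\rbrbrak;X)\cdot H\right]+R_\omega(H),
\]
where the Lipschitz estimate on $\partial_\mu U$ in \zcref{cal:space1:ineq1} combined with $\sW_2(\lbrbrak X+tH\rbrbrak,\lbrbrak X\rbrbrak)\le t\|H\|_{L^2}$ gives $|R_\omega(H)|\le C_\omega\|H\|_{L^2}^2$. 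Taking $\bE_{\bP}$ and using $\bE_{\bP}[C_\omega]<\infty$ shows that $\tilde V$ is Fréchet differentiable at $X$ with gradient $\bE_{\bP}[\partial_\mu U(\cdot,s,x,\lbrbrak X\rbrbrak;X)]$, so $\partial_\mu V$ exists and equals $\bE[\partial_\mu U(\cdot,s,x,\mu;\cdot)]$; the bound and Lipschitz property of $\partial_\mu V$ then follow from the assumptions on $\partial_\mu U$ and inductive step applies to the next order.

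Once each elementary derivative has been shown to commute with $\bE$ and to preserve the structure of the hypothesis (integrable envelope, Lipschitz bounds, time continuity), I would iterate in the order dictated by \zcref{multi-index:eq1}: first all $\partial_{s_i}$, then $\partial_{x_j}$, then $\partial_\mu$, then the $\nabla_{v_k}$ derivatives. At every stage the new integrand is bounded by the same $C_\omega$, so the process terminates after at most $k$ steps and yields both $V\in\sM_b^k$ and the identity \eqref{cal:leibniz:eq1}. The only genuinely non-routine ingredient is the Fréchet expansion described above; all remaining steps are book-keeping through dominated convergence and the multi-index notation of \zcref{multi-index}.
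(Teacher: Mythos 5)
Your proposal is correct, and for the key step it takes a genuinely different route from the paper. The paper first invokes the Leibniz integral rule to reduce to $q=r=0$, so that only the measure variable remains, and then handles the interchange of the L-derivative with the expectation by non-smooth analysis: it shows that the lift $\tilde U_\omega$ has a bounded Lipschitz gradient, deduces strict differentiability and regularity in Clarke's sense, and applies Clarke's interchange theorem (Theorem 2.7.2 together with Proposition 2.2.4) to get $\nabla\tilde V(X)=\bE[\nabla\tilde U(\cdot,X)]$, before concluding by induction on the order exactly as you do. Your argument replaces this black box by a direct first-order Taylor expansion of $t\mapsto\tilde U_\omega(X+tH)$, with the remainder controlled by the Lipschitz estimate on $\partial_\mu U$ in \zcref{cal:space1:ineq1} and the coupling bound $\sW_2(\lbrbrak X+tH\rbrbrak,\lbrbrak X\rbrbrak)\le t\|H\|_{L^2}$, giving $|R_\omega(H)|\lesssim C_\omega\|H\|_{L^2}^2$ and hence Fréchet differentiability of the lifted $\tilde V$ after taking $\bE$ (the mention of Jensen is superfluous). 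This is more elementary and self-contained, and it also makes explicit the treatment of the time and space derivatives that the paper compresses into the phrase ``by Leibniz integral rule''. What the paper's route buys is chiefly economy and the fact that Clarke's framework for integral functionals absorbs the measurability of the gradient in $\omega$; in your induction you should add a word on why $\omega\mapsto\rD^{\alpha}U(\omega,s,x,\mu;v)$ is measurable at each stage (immediate for time/space derivatives as pointwise limits of difference quotients, and needing a short argument for $\partial_\mu$), though the paper's own induction step has the same implicit requirement, so this is a caveat rather than a gap.
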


We will encounter functions induced by integrating extra space variables in $\partial_\mu^k U$ w.r.t $\mu$. We establish the smoothness for these functions.

\begin{lemma} \label{cal:exp}
Let $q, r, k \in \bN$. The following statements hold:
\begin{enumerate}
\item Let $U_1, U_2 \colon \Delta^q \times \bR^{r} \times \sP_2 (\bR^d) \to \bR$ be of class $\sM_b^k$. We define $V \colon \Delta^q \times \bR^{r} \times \sP_2 (\bR^d) \to \bR$ by $V (s, x, \mu) \coloneq U_1 (s, x, \mu) U_2 (s, x, \mu)$. Then $V$ is of class $\sM_b^k$.

\item Let $U \colon \bR^d \times \sP_2 (\bR^d) \to \bR$ be of class $\sM_b^k$. We define $V \colon \sP_2 (\bR^d) \to \bR$ by $V (\mu) \coloneq \bE [U(Y, \mu)]$ where $\lbrbrak Y \rbrbrak = \mu$. Then $V$ is $k$-times L-differentiable. For $v=(v_1, \ldots,v_k) \in (\bR^d)^k$ and $v_{-i} = (v_1, \ldots, v_{i-1}, v_{i+1}, \ldots, v_k)$,
\begin{equation} \label{cal:exp:eq0}
\partial_\mu^k V (\mu; v) = \bE[\partial_\mu^k U(Y, \mu; v)] + \sum_{i=1}^k \partial_\mu^{k-i} \nabla_{v_i} \partial_\mu^{i-1} U(v_i, \mu; v_{-i}) .
\end{equation}

\item Let $U \colon \Delta^q \times (\bR^d)^{r} \times \sP_2 (\bR^d) \to \bR$ be of class $\sM_b^k$. We define $V \colon \Delta^q \times \sP_2 (\bR^d) \to \bR$ by $V (s, \mu) \coloneq \bE [U(s, Y_1, \ldots, Y_{r}, \mu)]$ where $\{ Y_1, \ldots, Y_r \}$ are i.i.d according to $\mu$. Then $V$ is of class $\sM_b^k$.

\item Let $U \colon \Delta^{q} \times \sP_2 (\bR^d) \to \bR$ be of class $\sM_b^{k}$ with $k \ge 2$. Assume that $b$ and $\sigma$ are of class $\sM_b^{k-2}$. We define $V \colon \Delta^{q} \to \bR$ by $V (s) \coloneq U(s, \mu_{s_q})$ for every $s = (s_1, \ldots, s_q)$ where $\{ \mu_t \}_{ t \in \bT }$ is the flow of the marginal distributions of the solution to \zcref{intro:main-sde}. Then $V$ is of class $\sM_b^{k}$.
\end{enumerate}
\end{lemma}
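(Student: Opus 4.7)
The four assertions are treated in sequence. Part (1) follows from the generalized Leibniz rule: for any multi-index $\alpha$ with $|\alpha| \le k$, after commuting mixed derivatives using \zcref{cal:Clairaut-lm} where needed, $\rD^\alpha(U_1 U_2)$ decomposes as a finite sum of products $\rD^{\alpha_1} U_1 \cdot \rD^{\alpha_2} U_2$ with $|\alpha_1| + |\alpha_2| \le k$; boundedness, the Lipschitz estimate via $AB - A'B' = (A-A')B + A'(B-B')$, and time-uniform continuity then follow factor-by-factor. For part (2), I argue by induction on $k$. In the base case $k = 1$, I work with the lift $\tilde V(X) \coloneq \bE[U(X, \lbrbrak X\rbrbrak)]$: splitting
\[
\tilde V(X+\xi) - \tilde V(X) = \bE\bigl[U(X+\xi, \lbrbrak X+\xi\rbrbrak) - U(X+\xi, \lbrbrak X\rbrbrak)\bigr] + \bE\bigl[U(X+\xi, \lbrbrak X\rbrbrak) - U(X, \lbrbrak X\rbrbrak)\bigr],
\]
expanding the first summand via the L-derivative of $\mu \mapsto U(y, \mu)$ (using Fubini with an independent copy $Y'$ of $X$) and the second via $\nabla_x U$, and identifying the $L^2$ gradient yields $\partial_\mu V(\mu; v_1) = \bE[\partial_\mu U(Y, \mu; v_1)] + \nabla_{v_1} U(v_1, \mu)$. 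The inductive step applies $\partial_\mu$ to the level-$k$ formula: invoking the $k = 1$ case with $U$ replaced by $\partial_\mu^k U(\cdot, \cdot; v_1, \ldots, v_k)$ handles the expectation term and produces the new boundary contribution $\nabla_{v_{k+1}} \partial_\mu^k U(v_{k+1}, \mu; v_1, \ldots, v_k)$, while each existing summand acquires one additional $\partial_\mu$, yielding the $(k+1)$-term formula. Part (3) follows by iterating (2) one variable at a time, regularity in $(s, \mu)$ being preserved through (1).

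Part (4) is the crux. I argue by induction on $k$. For $k \in \{0, 1\}$ only $V$ itself is needed: boundedness is immediate and uniform continuity on $\Delta^q$ follows from the Lipschitz-in-$\mu$ property of $U$ combined with the standard bound $\sW_2(\mu_t, \mu_{t'}) \le C\sqrt{|t - t'|}$ valid for McKean-Vlasov flows with $b, \sigma$ Lipschitz. For $k \ge 2$, it suffices to show that $\partial_{s_i} V$ is of class $\sM_b^{k-2}$ for each $i$, since time derivatives are weighted by $2m_i$ in \zcref{multi-index}. For $i < q$ this reads $\partial_{s_i} V(s) = (\partial_{s_i} U)(s, \mu_{s_q})$, with $\partial_{s_i} U$ of class $\sM_b^{k-2}$ in $(s, \mu)$ by hypothesis, so the inductive hypothesis at level $k - 2$ applies directly. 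For $i = q$ I invoke the Itô-type chain rule on $\sP_2(\bR^d)$ along the McKean-Vlasov flow applied to the map $\mu \mapsto U(s, \mu)$, giving
\[
\partial_{s_q} V(s) = \partial_{s_q} U(s, \mu_{s_q}) + \bE\Bigl[b(s_q, X_{s_q}, \mu_{s_q}) \cdot \partial_\mu U(s, \mu_{s_q}; X_{s_q}) + \tfrac{1}{2}(\sigma\sigma^\top)(s_q, X_{s_q}, \mu_{s_q}) : \nabla_v \partial_\mu U(s, \mu_{s_q}; X_{s_q})\Bigr].
\]
The first summand falls under the $i < q$ analysis. The second is $\tilde W(s, \mu_{s_q})$ for $\tilde W(s, \mu) \coloneq \bE[b(s_q, Y, \mu) \cdot \partial_\mu U(s, \mu; Y) + \tfrac{1}{2}(\sigma\sigma^\top)(s_q, Y, \mu) : \nabla_v \partial_\mu U(s, \mu; Y)]$ with $\lbrbrak Y\rbrbrak = \mu$, and $\tilde W$ is of class $\sM_b^{k-2}$ in $(s, \mu)$ by parts (1) and (3), since $b$ and $\sigma\sigma^\top$ are of class $\sM_b^{k-2}$ by hypothesis, while $\partial_\mu U$ and $\nabla_v \partial_\mu U$ are of classes $\sM_b^{k-1} \subset \sM_b^{k-2}$ and $\sM_b^{k-2}$ respectively. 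The inductive hypothesis applied to $\tilde W$ composed with $s_q \mapsto \mu_{s_q}$ closes the argument.

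The principal obstacle is the bookkeeping in part (4): one must verify at each step that the loss of two orders of $\sM_b$ regularity matches the weight $2m_i$ attributed to time derivatives in \zcref{multi-index}, so that the recursion closes at level $k$ rather than degrading. Because the generator $\cL^\mu$ combines a $\partial_\mu$ (cost $1$) with a $\nabla_v \partial_\mu$ (cost $2$), the top-order term saturates this weight exactly. The remaining analytic content is to confirm that the Itô formula on Wasserstein space applies under the hypothesis $U \in \sM_b^2$ alone and that all constants in \zcref{cal:space1:ineq1} can be taken uniform in the remaining time variables throughout the iteration.
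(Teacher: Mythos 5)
Your proposal is correct and follows essentially the same route as the paper: the product/Leibniz argument for (1), differentiation of the lift with a Fubini symmetrization over an independent copy to get $\partial_\mu V(\mu;v)=\bE[\partial_\mu U(Y,\mu;v)]+\nabla_v U(v,\mu)$ and then the same inductive step for (2), iteration of (2) one variable at a time for (3), and for (4) the Wasserstein It\^o formula along the flow combined with (1) and (3) to show the generator term stays in $\sM_b^{k-2}$, your induction on $k$ being just a repackaging of the paper's direct iteration of the operator $\rL^{m_2}$ applied to $\partial_s^{m_1}U$. The only slip is cosmetic: in (3) the joint regularity in the remaining variables comes from the explicit formula of (2) together with differentiation under the expectation (the Leibniz integral rule), not from the product rule (1).
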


The proof of \zcref{cal:exp}(1) is straightforward thus omitted. We recall a generalization of Itô's lemma.

\begin{lemma} \label{cal:Ito-lemma}
\cite[Proposition 5.102]{carmona_probabilistic_2018} Let $\{ b_t \}_{ t \in \bT }, \{ \eta_t \}_{ t \in \bT }, \{ \sigma_t \}_{ t \in \bT }$ and $\{ \gamma_t \}_{ t \in \bT }$ be $\bF$-progressively measurable processes on $(\Omega, \cA, \bF, \bP)$ and with values in $\bR^d, \bR^d, \bR^d \otimes \bR^d$ and $\bR^d \otimes \bR^d$ respectively. We assume that
\begin{align}
\lbrbrak Y_0 \rbrbrak \in \sP_2 (\bR^d)
, \quad
\diff Y_t = b_t \diff t + \sigma_t \diff B_t
, \quad
\diff X_t = \eta_t \diff t + \gamma_t \diff B_t , \\
\bE \Big [ \int_{\bT} \{ |b_t|^2 + |\sigma_t|^4 \} \diff t \Big ] < \infty 
\qtextq{and}
\bP \Big [ \int_{\bT} \{ |\eta_t| + |\gamma_t|^2 \} \diff t < \infty \Big ] = 1 .
\end{align}

Let $U \colon \bT \times \bR^d \times \sP_2 (\bR^d) \to \bR$ be of class $\sM_b^2$. Then, $\bP$-a.s., for all $t \in \bT$, it holds:
\begin{myalign}
& U (t, X_t, \lbrbrak Y_t \rbrbrak ) = U (0, X_0, \lbrbrak Y_0 \rbrbrak ) + \int_0^t [\nabla_x U]^\top (s, X_s, \lbrbrak Y_s \rbrbrak ) \gamma_s \diff B_s \\
& + \int_0^t \bigg \{  \partial_s U (s, X_s, \lbrbrak Y_s \rbrbrak ) + \eta_s \cdot \nabla_x U (s, X_s, \lbrbrak Y_s \rbrbrak ) + \frac{1}{2} \gamma_s \gamma_s^\top : \nabla_x^2 U (s, X_s, \lbrbrak Y_s \rbrbrak ) \\
& + \tilde \bE \Big [ \tilde b_s \cdot \partial_\mu U (s, X_s, \lbrbrak Y_s \rbrbrak ; \tilde Y_s ) + \frac{1}{2} \tilde \sigma_s \tilde \sigma_s^\top : \nabla_v \partial_\mu U (s, X_s, \lbrbrak Y_s \rbrbrak  ; \tilde Y_s ) \Big ] \bigg \} \diff s . \\
\end{myalign}

Above, $(\tilde Y_t, \tilde b_t, \tilde \sigma_t)_{t \in \bT}$ is a copy of $(Y_t, b_t, \sigma_t)_{t \in \bT}$ defined on a copy $(\tilde \Omega, \tilde \cA, \tilde \bP, \tilde \bE)$ of $(\Omega, \cA, \bP, \bE)$.
\end{lemma}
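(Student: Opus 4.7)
\medskip

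\noindent\textbf{Proof plan for Lemma~\ref{cal:Ito-lemma}.} Since the statement is taken verbatim from \cite[Proposition 5.102]{carmona_probabilistic_2018}, my plan is to reproduce their strategy, which reduces the measure-valued Itô formula to the classical multidimensional one applied to an empirical approximation of the law of $Y_s$. First I would enlarge the filtered probability space so as to support $N \in \bN^*$ independent copies $\{(Y^i_t, b^i_t, \sigma^i_t, B^i_t)_{t\in\bT}\}_{i=1}^N$ of $(Y, b, \sigma, B)$ with the $B^i$ also independent of $B$; then I would form the (random) empirical measure $\hat\mu^N_t \coloneq \frac{1}{N}\sum_{i=1}^N \delta_{Y^i_t}$ and define $U_N \colon \bT \times \bR^d \times (\bR^d)^N \to \bR$ by
\[
U_N(t, x, y^1, \ldots, y^N) \coloneq U\Big(t, x, \tfrac{1}{N}\textstyle\sum_{i=1}^N \delta_{y^i}\Big).
\]

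Using the standard identification of L-derivatives on empirical measures, which gives $\nabla_{y^i} U_N = \frac{1}{N}\partial_\mu U(\cdot;y^i)$ and $\nabla_{y^i}^2 U_N = \frac{1}{N}\nabla_v\partial_\mu U(\cdot;y^i) + \frac{1}{N^2}\partial_\mu^2 U(\cdot;y^i,y^i)$, with mixed partials in distinct indices of order $1/N^2$, I would apply the classical Itô formula to $U_N(t, X_t, Y^1_t, \ldots, Y^N_t)$. This produces: the original drift/trace terms in $x$; a martingale $\int_0^t \nabla_x U^\top \gamma_s \diff B_s$ coming from $\diff X_t$; orthogonal martingales driven by the $B^i$ whose quadratic variations are bounded by $\sum_{i=1}^N \frac{1}{N^2}\bE|\sigma^i_s|^2 \diff s = O(1/N)$; a vanishing $O(1/N)$ contribution from the $\frac{1}{N^2}\partial_\mu^2 U(\cdot;y^i,y^i)$ terms; and the two significant sums $\frac{1}{N}\sum_i b^i_s\cdot \partial_\mu U(s,X_s,\hat\mu^N_s;Y^i_s)$ and $\frac{1}{2N}\sum_i \sigma^i_s(\sigma^i_s)^\top : \nabla_v\partial_\mu U(s,X_s,\hat\mu^N_s;Y^i_s)$.

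It then remains to let $N\to\infty$. By the strong law of large numbers, $\hat\mu^N_s \to \lbrbrak Y_s \rbrbrak$ in $\sW_2$ almost surely for each $s$, and the class $\sM_b^2$ assumption (boundedness plus Lipschitz continuity in $(x,\mu,v)$ of $\partial_\mu U$ and $\nabla_v\partial_\mu U$) together with the integrability of $b^i_s,\sigma^i_s$ furnishes a conditional propagation-of-chaos / LLN argument showing that the two surviving sums converge in $L^1(\Omega\times\bT)$ to $\tilde\bE[\tilde b_s \cdot \partial_\mu U(s,X_s,\lbrbrak Y_s\rbrbrak;\tilde Y_s)]$ and $\frac{1}{2}\tilde\bE[\tilde\sigma_s\tilde\sigma_s^\top : \nabla_v\partial_\mu U(s,X_s,\lbrbrak Y_s\rbrbrak;\tilde Y_s)]$ respectively. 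The martingale $\int_0^t\nabla_x U^\top\gamma_s\diff B_s$ is unaffected by the approximation since it depends only on $\hat\mu^N_s$ through $U$ evaluated at it, and by continuity of $\nabla_x U$ in $\mu$ the integrand also converges and the BDG inequality passes the limit under the integral.

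The main obstacle is the limit passage in these drift/trace terms uniformly in $s\in\bT$ in $L^1$: one must combine the uniform boundedness and Lipschitz regularity in $\sW_2$ built into $\sM_b^2$ with $L^2$ bounds on $(b_s,\sigma_s)$ supplied by the hypotheses to obtain a dominated-convergence argument after showing $\sW_2(\hat\mu^N_s, \lbrbrak Y_s\rbrbrak)\to 0$ in $L^1(\Omega\times\bT)$. Once this is secured, the announced identity follows by taking the a.s.\ limit $N\to\infty$ in the finite-dimensional Itô expansion.
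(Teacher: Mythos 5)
The paper gives no proof of this lemma: it is quoted verbatim from \cite[Proposition 5.102]{carmona_probabilistic_2018} and used as a black box, so there is no in-paper argument to compare against. Your empirical-projection sketch is the standard route to this statement in the literature and is essentially sound: the class $\sM_b^2$ does supply exactly the bounded, Lipschitz derivatives ($\partial_t U$, $\nabla_x U$, $\nabla^2_x U$, $\partial_\mu U$, $\nabla_v\partial_\mu U$, $\partial^2_\mu U$) needed for the finite-dimensional projection $U_N$ to be smooth with the derivative identities you state, and the law-of-large-numbers passage $N\to\infty$ is the expected mechanism. Two points would need care in a full write-up: first, $\eta$ and $\gamma$ are only assumed pathwise integrable ($\bP[\int_\bT\{|\eta_t|+|\gamma_t|^2\}\diff t<\infty]=1$), so the convergence of the stochastic integral $\int_0^t[\nabla_x U]^\top(s,X_s,\hat\mu^N_s)\gamma_s\diff B_s$ cannot be run directly through BDG in $L^2$ and requires a localization by stopping times (the drift terms are then handled pathwise by dominated convergence, using boundedness of the derivatives); second, the ``$\bP$-a.s.\ for all $t$'' conclusion should be obtained by establishing the identity for each fixed $t$ (or along a countable dense set) and invoking continuity in $t$ of both sides, rather than by an a.s.\ uniform limit of the whole expansion. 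With these adjustments your plan reproduces the known proof of the cited result.
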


We establish a generalization of the master PDE.

\begin{theorem} \label{cal:master-PDE}
Let $q, k \in \bN$ with $k \ge 2$ and $\tilde \cU \colon \Delta^{q} \times \sP_2 (\bR^d) \to \bR$. Let $(b, \sigma, \tilde \cU)$ be of class $\sM_b^k$. For $(r, \mu) \in [0, T) \times \sP_2 (\bR^d)$, we consider the SDE
\begin{equation} \label{cal:master-PDE:eq1}
\begin{dcases}
 X_t^{r, \mu} & = X_r^{r, \mu} + \int_r^t b (s, X_s^{r, \mu}, \lbrbrak X_s^{r, \mu} \rbrbrak) \diff s + \int_r^t \sigma (s, X_s^{r, \mu}, \lbrbrak X_s^{r, \mu} \rbrbrak) \diff B_s
\qtextq{for} t \in  [r, T] , \\
\lbrbrak X_r^{r, \mu} \rbrbrak & = \mu ,
\end{dcases}
\end{equation}

We define $\tilde \cV \colon \Delta^{q+1} \times \sP_2 (\bR^d) \to \bR$ by $\tilde \cV (s, r, \mu) \coloneq \tilde \cU (s, \lbrbrak X_{s_q}^{r, \mu} \rbrbrak)$ for all $s = (s_1, \ldots, s_{q}) \in \Delta^{q}$ and $(r, \mu) \in [0, s_q] \times \sP_2 (\bR^d)$. Then the following three statements hold:

\begin{enumerate}
\item The SDE \zcref{cal:master-PDE:eq1} has a unique weak solution and thus $\tilde \cV$ is well-defined.

\item The map $\tilde \cV$ is of class $\sM_b^k$.

\item For all $s = (s_1, \ldots, s_{q}) \in \Delta^q$ and $(r, \mu) \in (0, s_q) \times \sP_2 (\bR^d)$,
\begin{equation} \label{cal:master-PDE:eq2}
\begin{dcases}
\begin{multlined}[t]
\partial_r \tilde \cV (s, r, \mu) + \int_{\bR^d} \Big [ b (r, v, \mu) \cdot \partial_\mu \tilde \cV (s, r, \mu ; v) + \frac{1}{2} a (r, v, \mu) : \nabla_v \partial_\mu \tilde \cV (s, r, \mu ; v) \Big ] \diff \mu (v) = 0 , \\
\end{multlined} \\
\tilde \cV(s, s_q, \mu) = \tilde \cU (s, \mu) .
\end{dcases}
\end{equation}
\end{enumerate}
\end{theorem}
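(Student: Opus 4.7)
I plan to handle the three statements in order, with the bulk of the work in (2); part (1) is classical well-posedness and part (3) will follow from Itô's formula applied along the flow once enough regularity of $\tilde \cV$ is in hand.

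\textbf{Part (1).} Since $(b, \sigma)$ are of class $\sM_b^k$ with $k \ge 2$, they are bounded and globally Lipschitz in $(x, \mu)$ uniformly in time. A Picard fixed-point argument on $C([r, T]; L^2(\Omega; \bR^d))$ (cf.\ \cite[Theorem 4.21]{carmona2018probabilistic}) gives strong well-posedness of \eqref{cal:master-PDE:eq1}. Pathwise uniqueness implies uniqueness in law, so $\lbrbrak X^{r, \mu}_{s_q} \rbrbrak$ depends only on $\mu$ and not on the chosen representative $X^{r,\mu}_r$, which makes $\tilde \cV$ well-defined.

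\textbf{Part (2).} I decompose $\tilde \cV (s, r, \mu) = \tilde \cU (s, \Phi(r, s_q, \mu))$, where $\Phi(r, t, \mu) \coloneq \lbrbrak X^{r, \mu}_t \rbrbrak$ is the McKean--Vlasov semigroup. Lions-differentiability of $\mu \mapsto \Phi(r, t, \mu)$ up to order $k$, with joint continuity of the derivatives in $(r, t, \mu)$, is provided by the techniques of \cite{buckdahn_mean-field_2017} and the refinements in \cite[\S 4]{chassagneux_weak_2022} applied to our coefficients. Composing with the smooth $\tilde \cU$ and invoking \zcref{cal:exp}(1)--(3) yields regularity in the $s_1,\dots,s_{q-1}$, $\mu$, and $v$-variables. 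For the time variable $r$, first-order differentiability comes from \zcref{cal:Ito-lemma} applied to $r' \mapsto \tilde \cV (s, r', \lbrbrak X^{r, \mu}_{r'} \rbrbrak)$, which is constant on $[r, s_q]$ by the flow property. The resulting expression for $\partial_r \tilde \cV$ (the identity in part (3)) is then used as a recursion: differentiating it iteratively in $r$ expresses $\partial_r^m \tilde \cV$ in terms of mixed derivatives of order $2m$ in $(v, \mu)$ of $\tilde \cV$, which fits inside the $\sM_b^k$ budget precisely because \zcref{multi-index} weights a time derivative as two spatial ones. At each iteration, the uniform continuity and boundedness required by \zcref{cal:space1} are verified through \zcref{cal:exp}(1),(3), which closes the bootstrap. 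Time regularity in $s_q$ is handled analogously via the Kolmogorov backward equation obtained by differentiating $s_q \mapsto \tilde \cU(s, \Phi(r, s_q, \mu))$ through the generator of the decoupled SDE.

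\textbf{Part (3) and main obstacle.} Once (2) is in place, the flow property $\Phi(r, s_q, \mu) = \Phi(r', s_q, \Phi(r, r', \mu))$ for $r \le r' \le s_q$ shows that $r' \mapsto \tilde \cV (s, r', \lbrbrak X^{r, \mu}_{r'} \rbrbrak)$ is constant on $[r, s_q]$. \zcref{cal:Ito-lemma} then applies with $X_t$ taken trivial and $Y_t = X^{r, \mu}_t$; the stochastic integral vanishes for lack of an $x$-argument, and the finite-variation integrand must vanish at every $r' \in (r, s_q)$. Evaluating at $r' = r$ and using $\lbrbrak X^{r, \mu}_r \rbrbrak = \mu$ rewrites the tilded expectation as an integral against $\mu$ and yields \eqref{cal:master-PDE:eq2}. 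The main obstacle is the higher-order time regularity in (2): the works \cite{buckdahn_mean-field_2017,chassagneux_weak_2022} stop at the order needed to write down the first-order PDE once, whereas here I need arbitrarily high orders in $r$. Treating the master PDE itself as a recursive device and carefully tracking, through \zcref{cal:exp}, that each iteration preserves the $\sM_b^{k-2\ell}$ structure with the joint continuity in $s$ and the uniform Lipschitz constants demanded by \zcref{cal:space1}, is the technical heart of the argument.
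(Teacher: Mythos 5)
Your overall route coincides with the paper's for the genuinely new ingredient: both use the master PDE itself as a recursion for the $r$-derivatives, exploiting that one time derivative counts as two space/measure derivatives in the $\sM_b^k$ bookkeeping, and both obtain part (3) from the constancy of $r' \mapsto \tilde \cV (s, r', \lbrbrak X^{r,\mu}_{r'} \rbrbrak)$ together with \zcref{cal:Ito-lemma}. The gap is in part (2), which is the heart of the theorem. You invoke ``Lions-differentiability of $\mu \mapsto \Phi(r,t,\mu)$ up to order $k$ \dots provided by the techniques of \cite{buckdahn_mean-field_2017} and \cite{chassagneux_weak_2022} applied to our coefficients'', but this is not available at the stated generality: \cite{buckdahn_mean-field_2017} stops at second order in the measure, and \cite{chassagneux_weak_2022} treats $d=1$ with $b=0$ and $\sigma$ depending only on $(t,\mu)$. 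The paper itself does not close this in general either --- it proves the $\sM_b^k$ regularity of $\tilde\cV$ only in two simplified settings (coefficients independent of the measure, or independent of the space variable, $d=1$) --- and the technical core there is an explicit representation $\rD^{(p,0,0,\ell)}\tilde\sV(t,r,\lbrbrak\xi\rbrbrak;v)=H_{(p,\ell)}(t,\lbrbrak \mathbf{Y}^r_t(\xi;v)\rbrbrak)$, where $\mathbf{Y}^r$ solves an auxiliary high-dimensional SDE built from Kunita flow derivatives and the measure-derivative processes of \cite{buckdahn_mean-field_2017,chassagneux_weak_2022}. That representation is what delivers the uniform bounds and Lipschitz estimates demanded by \zcref{cal:space1}, and it is also the only vehicle for the higher-order $s_q$-derivatives: your phrase ``Kolmogorov backward equation through the generator of the decoupled SDE'' cannot be run directly on the mixed derivatives $\nabla_v^\ell\partial_\mu^p\tilde\sV$ --- one must first know that each such derivative is a smooth functional of the law of an SDE before iterating the generator via \zcref{cal:Ito-lemma} and \zcref{cal:exp}(4). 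Your sketch treats precisely this step as a citation, so the proposal does not actually prove (2) even in the restricted settings the paper handles.

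Two further points in your time-regularity argument need repair. First, deducing first-order $r$-differentiability ``from \zcref{cal:Ito-lemma} applied to $r' \mapsto \tilde \cV (s, r', \lbrbrak X^{r,\mu}_{r'} \rbrbrak)$'' is circular: that lemma presupposes the time derivative of the integrand, i.e.\ exactly the differentiability in $r$ you are trying to establish; one must instead freeze the time slot (flow property plus Itô applied to $\mu' \mapsto \tilde\cV(s,r',\mu')$ along the flow, or cite \cite[Theorem 2.18]{chassagneux_weak_2022} as the paper does) before dividing by $r'-r$. Second, differentiating the master PDE repeatedly in $r$ requires interchanging $\partial_r$ with $\partial_\mu$ and with $\nabla_v\partial_\mu$ applied to lower-order $r$-derivatives of $\tilde\cV$; the paper proves this interchange separately (\zcref{cal:Clairaut-lm}(3)) under hypotheses that must be verified at each stage of the induction, and your bootstrap is silent on it.
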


The well-posedness of \zcref{cal:master-PDE:eq1} under the given conditions is now a classical fact. See e.g. \cite[Proposition 1.2]{jourdain_nonlinear_2008} and \cite[Theorem 4.21]{carmona2018probabilistic}. The fact that $\tilde \sV$ solves \zcref{cal:master-PDE:eq2} is established in \cite[Theorem 7.2]{buckdahn_mean-field_2017}. Therefore, in \zcref{cal:master-PDE:prf}, we only need to prove the fact that $\tilde \cV$ is of class $\sM^k_b$. This is done in two simplified settings: in the first case, considered by \cite{chassagneux_weak_2022} in their proof to avoid too complicated expressions, the coefficients $(b, \sigma)$ do not depend on spatial variable; and in the second case, important in view of applications, the coefficients $(b, \sigma)$ do not depend on distribution variable.

\cite[Proposition 6.1]{chaudru_de_raynal_well-posedness_2022} considers a more general form where $\tilde \sU$ also depends on spatial variable and the right-hand side of the first equation in \zcref{cal:master-PDE:eq2} is a function. Let us mention some approaches to prove the smoothness of the solution to \zcref{cal:master-PDE:eq2}. \cite{crisan_smoothing_2018} employs Malliavin calculus and considers $\tilde \sU$ that also depends on spatial variable. \cite{chaudru_de_raynal_strong_2020} uses parametrix method and considers $b, \sigma$ of integral form, i.e.,
\[
b(t, x, \mu) = \int_{\bR^d} \tilde b (t, x, y) \diff \mu (y) \qtextq{and} \sigma (t, x, \mu) = \int_{\bR^d} \tilde \sigma (t, x, y) \diff \mu (y),
\]
for some functions $\tilde b \colon \bT \times \bR^d \times \bR^d \to \bR^d$ and $\tilde \sigma \colon \bT \times \bR^d \times \bR^d \to \bR^d \otimes \bR^d$. \cite{buckdahn_mean-field_2017,chassagneux_probabilistic_2022} use a variational approach and prove the smoothness of $\tilde \cV$ in measure up to the second order. \cite{chassagneux_weak_2022} generalizes the result of \cite{buckdahn_mean-field_2017} to arbitrary order. The new ingredient in \zcref{cal:master-PDE} is that $\tilde \cV$ inherits higher-order regularity in time from $b$ and $\sigma$.

For ease of reading, the proofs in this section are deferred until \zcref{calculus_on-P2:proof}.

\section{Main result}

Our main result is

\begin{theorem} \label{main-thm}
Assume that $(b, \sigma, \cU)$ are of class $\sM_b^\infty$. Then there exists a sequence $\{ C_i \}_{i \ge 1} \subset \bR$ such that for every $n, m \in \bN^*$,
\begin{equation} \label{main-thm:cnst}
\cU (\mu^{n}_T) - \cU (\mu_T) = \sum_{i=1}^m \frac{C_i}{n^i} + \cO \Big ( \frac{1}{n^{m+1}} \Big ) .
\end{equation}
\end{theorem}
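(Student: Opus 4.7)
The plan is to adapt the Talay--Tubaro argument \cite{talay_expansion_1990} to the nonlinear McKean--Vlasov setting, with \zcref{cal:master-PDE} playing the role of the backward Kolmogorov equation. I would first define the master functional $\cV \colon \bT \times \sP_2(\bR^d) \to \bR$ by $\cV(r, \mu) \coloneq \cU(\lbrbrak X_T^{r, \mu} \rbrbrak)$. Since $(b, \sigma, \cU)$ are of class $\sM_b^\infty$, \zcref{cal:master-PDE} (with $q = 0$, $\tilde \cU = \cU$ and arbitrary $k$) ensures $\cV \in \sM_b^\infty$ and
\[
\partial_r \cV(r, \mu) + \int_{\bR^d} \Big[ b(r, v, \mu) \cdot \partial_\mu \cV(r, \mu ; v) + \tfrac{1}{2} a(r, v, \mu) : \nabla_v \partial_\mu \cV(r, \mu ; v) \Big] \diff \mu(v) = 0, \qquad \cV(T, \cdot) = \cU,
\]
with $a = \sigma \sigma^\top$. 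Writing $\cU(\mu^n_T) - \cU(\mu_T) = \cV(T, \mu^n_T) - \cV(0, \nu)$ and applying \zcref{cal:Ito-lemma} to $(t, x, \mu) \mapsto \cV(t, \mu)$ along $X_t = Y_t = X^n_t$ (so that the $x$-derivatives and Brownian integral vanish), then substituting the master PDE to cancel $\partial_s \cV$, I would obtain the starting identity
\[
\cU(\mu^n_T) - \cU(\mu_T) = \int_0^T \bE[\Phi^n_s] \diff s,
\]
where $\Phi^n_s \coloneq (b_{\tau^n_s} - b_s) \cdot \partial_\mu \cV(s, \mu^n_s ; X^n_s) + \tfrac{1}{2}(a_{\tau^n_s} - a_s) : \nabla_v \partial_\mu \cV(s, \mu^n_s ; X^n_s)$ with $b_s = b(s, X^n_s, \mu^n_s)$, $b_{\tau^n_s} = b(\tau^n_s, X^n_{\tau^n_s}, \mu^n_{\tau^n_s})$ (similarly for $a$). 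By construction $\bE[\Phi^n_{t_k}] = 0$ at every grid point.

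To extract the leading coefficient $C_1/n$, for $s \in [t_k, t_{k+1}]$ I would treat $\bE[\Phi^n_s]$ as the expectation of a smooth function of $(r, X^n_r, \mu^n_r)$ with $\cF_{t_k}$-measurable parameters $(t_k, X^n_{t_k}, \mu^n_{t_k})$, and apply \zcref{cal:Ito-lemma} a second time together with the master PDE to write $\bE[\Phi^n_s] = \int_{t_k}^s \bE[\Psi^n_{r,k}] \diff r$ for a smooth multivariable integrand $\Psi^n_{r,k}$. After Fubini and a Taylor expansion of $\bE[\Psi^n_{r,k}]$ around $r = t_k$, the leading piece becomes
\[
\sum_{k=0}^{n-1} \frac{\eps_n^2}{2} \bE[\Psi^n_{t_k, k}] = \frac{\eps_n}{2} \sum_k \eps_n \bE[\Psi^n_{t_k, k}] = \frac{\eps_n}{2} \int_0^T \bE[\Psi(t, X_t, \mu_t)] \diff t + \cO(n^{-2}),
\]
where the last equality uses a Riemann-sum estimate and standard strong convergence of the Euler scheme to the McKean--Vlasov SDE. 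This identifies $C_1$ in closed form and leaves an $\cO(n^{-2})$ residual of the same integrated type.

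To reach arbitrary order $m$ I would then proceed by induction: the residual is again an integrated expectation of a smooth functional (living on a simplex $\Delta^q$ with $q \ge 1$ because of the accumulated time-integrations) that vanishes at the grid points, and the same Itô-plus-master-PDE mechanism produces $C_2/n^2$, and so on. Stability of the class $\sM_b^\infty$ under the operations that appear (products, integration against $\mu$, i.i.d.\ sampling, composition with the measure flow $s \mapsto \mu_s$, and time integration on simplices) is supplied by \zcref{cal:exp} and \zcref{par-form}, while the higher-order \emph{time} regularity of $\cV$ established in \zcref{cal:master-PDE}(2) is essential, because each induction step consumes a time derivative. The main obstacle is precisely this combinatorial and regularity-tracking bookkeeping: each iteration generates new Lions derivatives $\partial_\mu^p \cV$, new sampling variables $v_1, \dots, v_p$ and new multi-time integrated expressions, and one must verify that the construction stays within $\sM_b^\infty$ on $\Delta^q \times \bR^r \times \sP_2(\bR^d)$ uniformly in $n$, while ensuring that the remainder at step $m$ is $\cO(n^{-(m+1)})$.
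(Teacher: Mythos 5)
Your overall strategy (master functional $\cV(r,\mu)=\cU(\lbrbrak X_T^{r,\mu}\rbrbrak)$, Itô along the scheme, cancellation by the master PDE, iteration in time) is the same skeleton as the paper's, and your starting identity $\cU(\mu^n_T)-\cU(\mu_T)=\int_0^T\bE[\Phi^n_s]\diff s$ with $\bE[\Phi^n_{t_k}]=0$ is correct. The genuine gap is in the step where you identify coefficients that do not depend on $n$. Your Riemann sum $\eps_n\sum_k\bE[\Psi^n_{t_k,k}]$ involves the scheme's law $\mu^n_{t_k}$ (and $X^n_{t_k}$), and you propose to replace it by $\int_0^T\bE[\Psi(t,X_t,\mu_t)]\diff t$ up to $\cO(n^{-1})$ using ``standard strong convergence of the Euler scheme''. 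Strong convergence gives $\sW_2(\mu^n_{t_k},\mu_{t_k})=\cO(n^{-1/2})$ in general, so with a Lipschitz $\Psi$ this substitution costs $\cO(n^{-1/2})$ per term and leaves a residual $\cO(n^{-3/2})$ after the prefactor $\eps_n/2$ --- already too weak to match \zcref{main-thm:cnst} at $m=1$, which requires $\cO(n^{-2})$. What is actually needed is a \emph{weak}-error bound, i.e.\ a statement of the very same type as the theorem being proven, applied to the new functional $\varphi$; and at orders $m\ge 2$ it is not enough to bound this difference, it must itself be expanded to full order. This is precisely the recursive mechanism in the paper's proof: the differences $\varphi_{(i_1,\ldots,i_\ell)}(\tau^n_{s_1},\ldots,\tau^n_{s_\ell},\mu^n_{\tau^n_{s_\ell}})-\varphi_{(i_1,\ldots,i_\ell)}(\tau^n_{s_1},\ldots,\tau^n_{s_\ell},\mu_{\tau^n_{s_\ell}})$ are re-expanded by \zcref{main-thm:proof:repre}(3) (this is why that proposition, and \zcref{cal:master-PDE}, are formulated for functionals on $\Delta^q\times\sP_2(\bR^d)$ with general $q$), and the recursion terminates because each level carries an extra factor $(\bar\tau^n_t-t)^{i}$. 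Your induction sketch asserts the residual ``vanishes at the grid points'' and can be handled by the same Itô-plus-PDE mechanism, but the residual created by swapping $\mu^n$ for $\mu$ is not of that form; it is a weak-error term in the measure argument, and you give no mechanism to expand it.

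Two further points you leave implicit, which the paper must (and does) handle. First, after all replacements you still have time integrals against the sawtooth weights $(\bar\tau^n_t-t)^{i}$ evaluated at $\tau^n_t$; extracting $n$-independent constants from these requires an Euler--Maclaurin-type expansion, which is Lemma \zcref{main-thm:proof:int} (a Taylor expansion around $t_k$ alone only gives the leading term plus an unstructured remainder). Second, your iterated integrands $\Psi^n_{r,k}$ depend on the scheme both through the current state and through the frozen data at $t_k$; showing that, at $r=t_k$, the iterated generator collapses to a smooth function of $(t_k,\mu^n_{t_k})$ alone and independent of $n$ requires the representation through i.i.d.\ copies (property \hyperlink{main-thm:proof:sm-rp}{\textbf{(P)}} and \zcref{cal:time-diff2} in the paper), not just membership of $\sM_b^\infty$ under products and sampling. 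Without these two ingredients and, above all, without the recursive expansion of the $\mu^n$-versus-$\mu$ differences, the proposal does not yield constants $C_i$ independent of $n$ with an $\cO(n^{-(m+1)})$ remainder.
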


The remaining of this section is dedicated to the proof of \zcref{main-thm}. We recall from \zcref{intro} that $\eps_n = T/n$ and $t_k = k \eps_n$ for every $k \in \llbracket 0, n \rrbracket$. For each $k \in \llbracket 0, n-1 \rrbracket$, we denote
\begin{equation}\label{main-thm:proof:eq1}
\begin{alignedat}{3} 
\rI_k & \coloneq [t_k, t_{k+1}] , & \quad b_k & \coloneq b(t_k, X^n_{t_k}, \mu^n_{t_k}) , \\
\sigma_k & \coloneq \sigma (t_k, X^n_{t_k}, \mu^n_{t_k}) , & \quad a_k & \coloneq a(t_k, X^n_{t_k} , \mu^n_{t_k}) .
\end{alignedat}
\end{equation}

By \zcref{intro:euler-scheme}, we have on the interval $\rI_k$ that
\[
\diff X^n_t = b_k \diff t + \sigma_k \diff B_t .
\]

For $U \colon \rI_k \times \sP_2 (\bR^d) \to \bR$ of class $\sM_b^2$, we define $\rL_{t_k} U \colon \rI_k \times \sP_2 (\bR^d) \to \bR$ by
\begin{equation} \label{main-thm:proof:diff-oper}
\begin{multlined}[t]
\rL_{t_k} U (t, \mu) \coloneq \partial_t U (t, \mu) + \bE \Big [ b_k \cdot \partial_\mu U (t, \mu ; X^n_t) + \frac{1}{2} a_k : \nabla_{v} \partial_\mu U (t, \mu ; X^n_t) \Big ] .
\end{multlined}
\end{equation}

For $(q, k) \in \bN^* \times \llbracket 0, n \rrbracket$, we denote
\[
\Delta^q_{t_k} \coloneq \{ s = (s_1, \ldots, s_q) \in \Delta^q ; s_q \ge t_k \} .
\]

The next proposition gives an expansion of $\tilde \cU (\tilde s,t_K, \mu^{n}_{t_K}) - \tilde \cU (\tilde s,t_K,\mu_{t_K})$ when $\tilde \cU \colon \Delta^{q} \times \sP_2 (\bR^d) \to \bR$, $K\in\llbracket 0, n \rrbracket$ and the variable $\tilde s$, not needed when $q=1$ otherwise belongs to $\Delta^{q-1}_{t_K}$ (we use the convention that there is no variable $\tilde s\in\Delta^{q-1}_{t_K}$ when $q=1$). We even deal with the case when $\tilde \cU \colon \sP_2 (\bR^d) \to \bR$ by using the similar convention that there is no variable $s\in\Delta^q$ when $q=0$.

Let $\bar \tau^n_t \coloneq t_{k+1}$ if $t \in [t_k, t_{k+1})$ for some $k \in \llbracket 0, n-1  \rrbracket$.

\begin{proposition} \label{main-thm:proof:repre}
Let $q \in \bN$ and $\tilde \cU \colon \Delta^{q} \times \sP_2 (\bR^d)  \to \bR$. Assume that $(b, \sigma, \tilde \cU)$ are of class $\sM_b^\infty$. We define $\tilde \cV \colon \Delta^{q+1} \times \sP_2 (\bR^d)  \to \bR$ by $\tilde \cV (s, t, \mu) \coloneq \tilde \cU (s, \lbrbrak X_{s_q}^{t, \mu} \rbrbrak)$ for $s = (s_1, \ldots, s_{q}) \in \Delta^{q}$ and $(t, \mu) \in [0, s_q] \times \sP_2 (\bR^d)$ with convention $s_0=T$ in case $q=0$. For $(m,k)\in \bN^*\times\llbracket 0, n-1 \rrbracket$, we denote by $\rL^m_{t_k} \coloneq \rL_{t_k} \circ \cdots \circ \rL_{t_k}$ the $m$-times composition of $\rL_{t_k}$. For $s \in \Delta^q_{t_{k+1}}$, we denote by $\rL_{t_k}^m \tilde \cV (s, t, \mu)$ the value at $(t, \mu)\in\rI_k \times \sP_2(\bR^d)$ of the application of $\rL_{t_k}^m$ on $\tilde \cV (s, \cdot, \cdot)$. Then
\begin{enumerate}
\item  For each $(m,k,s)\in \bN^*\times\llbracket 0, n-1 \rrbracket \times \Delta^q_{t_{k+1}}$, the map $\rI_k \times \sP_2(\bR^d)\ni(t,\mu)\mapsto\rL^m_{t_k} \tilde \cV (s,t,\mu)$ is smooth and $\rL_{t_k} \tilde \cV (s, t_k, \mu^n_{t_k}) = 0$.

\item For each $m \in \bN^*$, there exists a map $\varphi_m \colon \Delta^{q+1} \times \sP_2 (\bR^d)  \to \bR$ that is of class $\sM_b^\infty$; depends on $(q, b, \sigma, \tilde \cU)$ but not on $n$; and satisfies  $\varphi_{m} (s, t_k, \mu^n_{t_k})=\frac{1}{m!}\rL_{t_k}^{m+1} \tilde \cV  (s, t_k, \mu^n_{t_k})$ for every $(k, s) \in\llbracket 0, n-1 \rrbracket \times \Delta^q_{t_{k+1}}$.
\item When $q\ge1$, for each $(m, K, \tilde s) \in \bN \times \llbracket 0, n \rrbracket \times \Delta^{q-1}_{t_K}$,
\begin{myalign}
& \tilde \cU (\tilde s,t_K, \mu^{n}_{t_K}) - \tilde \cU (\tilde s,t_K,\mu_{t_K}) \\
& = \begin{myaligned}[t] \label{main-thm:proof:repre:eq1}
& \sum_{i=1}^{m} \int_{0}^{t_K} (\bar \tau^n_t - t)^i \varphi_{i}  (\tilde s,t_K,\tau^n_t, \mu^n_{\tau^n_t}) \diff t \\
& + \frac{1}{(m+1)!}\int_{0}^{t_K} (\bar \tau^n_t - t)^{m+1} \rL_{\tau^n_t}^{m+2}\tilde \cV(\tilde s,t_K, t, \mu^n_t) \diff t  ,
\end{myaligned}
\end{myalign} 
with the convention that there is no summation over $i$ on the right-hand side when $m=0$. When $q=0$,
\begin{myalign} 
& \tilde \cU (\mu^{n}_{T}) - \tilde \cU (\mu_{T}) \\
& = \begin{myaligned}[t] \label{main-thm:proof:repre:eq1bis}
& \sum_{i=1}^{m} \int_{0}^{t_K} (\bar \tau^n_t - t)^i \varphi_{i}  (\tau^n_t, \mu^n_{\tau^n_t}) \diff t \\
& + \frac{1}{(m+1)!}\int_{0}^{t_K} (\bar \tau^n_t - t)^{m+1} \rL_{\tau^n_t}^{m+2}\tilde \cV(t, \mu^n_t) \diff t.
\end{myaligned}
\end{myalign}  
\end{enumerate}
\end{proposition}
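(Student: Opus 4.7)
The proof combines a telescoping rewrite on the time grid with iterated It\^o--Taylor expansions on each $\rI_k$; the master PDE \zcref{cal:master-PDE:eq2} provides the first-order cancellation at grid nodes, and the technical content of part (2) is isolated by lifting the $n$-dependent operator $\rL_{t_k}$ to one depending only on deterministic parameters.

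\textbf{Part (1).} By \zcref{cal:master-PDE}(2), $\tilde\cV(s,\cdot,\cdot)\in\sM_b^\infty$. Each application of $\rL_{t_k}$ as in \zcref{main-thm:proof:diff-oper} takes one time and two $v$--$\mu$ derivatives of its argument and then an expectation against the joint law of $(X^n_{t_k},B_r-B_{t_k})$, which is smooth in $r$ because $X^n_r$ is an affine function of that pair. Combined with $b,\sigma\in\sM_b^\infty$ and \zcref{cal:leibniz} for differentiation under $\bE$, this preserves $\sM_b^\infty$-regularity, and induction on $m$ yields smoothness of $\rL_{t_k}^m\tilde\cV(s,\cdot,\cdot)$ on $\rI_k\times\sP_2(\bR^d)$. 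Evaluating at $(t_k,\mu^n_{t_k})$ and using $X^n_{t_k}\sim\mu^n_{t_k}$, one has $\bE[b_k\cdot\partial_\mu\tilde\cV(s,t_k,\mu^n_{t_k};X^n_{t_k})]=\int_{\bR^d}b(t_k,v,\mu^n_{t_k})\cdot\partial_\mu\tilde\cV(s,t_k,\mu^n_{t_k};v)\diff\mu^n_{t_k}(v)$ and analogously for the second-order term, so $\rL_{t_k}\tilde\cV(s,t_k,\mu^n_{t_k})=0$ is exactly the master PDE \zcref{cal:master-PDE:eq2} at $(t_k,\mu^n_{t_k})$.

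\textbf{Part (2).} I would lift $\rL_{t_k}$ to the deterministically parametrised operator
\[
\mathcal{L}^{\tau,\nu}U(r,\mu):=\partial_r U(r,\mu)+\int_{\bR^d}\bE\Big[b(\tau,y,\nu)\cdot\partial_\mu U(r,\mu;\Xi^{\tau,\nu,r}_y)+\tfrac12 a(\tau,y,\nu):\nabla_v\partial_\mu U(r,\mu;\Xi^{\tau,\nu,r}_y)\Big]\diff\nu(y),
\]
with $\Xi^{\tau,\nu,r}_y:=y+b(\tau,y,\nu)(r-\tau)+\sigma(\tau,y,\nu)\sqrt{r-\tau}\,W$ and $W$ a standard Gaussian on $\bR^d$ independent of $B$, so that $\rL_{t_k}U(r,\mu)=\mathcal{L}^{t_k,\mu^n_{t_k}}U(r,\mu)$ on $\rI_k$. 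Setting $\bar\varphi_m(s,\tau,\nu,r,\mu):=\tfrac1{m!}(\mathcal{L}^{\tau,\nu})^{m+1}\tilde\cV(s,r,\mu)$ and $\varphi_m(s,\tau,\nu):=\bar\varphi_m(s,\tau,\nu,\tau,\nu)$, the $n$-independence of $\varphi_m$ and the identity $\varphi_m(s,t_k,\mu^n_{t_k})=\tfrac1{m!}\rL_{t_k}^{m+1}\tilde\cV(s,t_k,\mu^n_{t_k})$ are built in by construction; what remains is the joint $\sM_b^\infty$-regularity of $\bar\varphi_m$ in $(s,\tau,\nu,r,\mu)$, which I would prove by induction on $m$ from \zcref{cal:master-PDE}(2), using \zcref{cal:leibniz} for differentiation under $\bE$, \zcref{cal:exp}(1)--(3) for products and integration against $\nu$, and \zcref{cal:Clairaut-lm} for exchange of derivatives. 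This induction is the \emph{main obstacle}, because $\nu$ plays three simultaneous roles --- integration measure of $y$, coefficient argument of $b(\tau,y,\nu)$ and $\sigma(\tau,y,\nu)$, and distribution argument of $\tilde\cV$ --- so each L-derivative in $\nu$ decomposes (as in \zcref{cal:exp}(2)) into several contributions whose regularity must be tracked term by term.

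\textbf{Part (3).} Because the solution of \zcref{cal:master-PDE:eq1} started from $(0,\nu)$ has law $\mu_{t_K}$ at $t_K$, one telescopes
\[
\tilde\cU(\tilde s,t_K,\mu^n_{t_K})-\tilde\cU(\tilde s,t_K,\mu_{t_K})=\sum_{k=0}^{K-1}\big[\tilde\cV(\tilde s,t_K,t_{k+1},\mu^n_{t_{k+1}})-\tilde\cV(\tilde s,t_K,t_k,\mu^n_{t_k})\big].
\]
On $\rI_k$, $\diff X^n_r=b_k\diff r+\sigma_k\diff B_r$; \zcref{cal:Ito-lemma} applied to $(r,\mu)\mapsto\tilde\cV(\tilde s,t_K,r,\mu)$ (trivially $x$-independent) along $Y_r=X^n_r$ gives $\frac{d}{dr}\tilde\cV(\tilde s,t_K,r,\mu^n_r)=\rL_{t_k}\tilde\cV(\tilde s,t_K,r,\mu^n_r)$, and iterating the same lemma on the smooth functions $\rL_{t_k}^j\tilde\cV$ from part (1) yields $\frac{d^j}{dr^j}\tilde\cV(\tilde s,t_K,r,\mu^n_r)=\rL_{t_k}^j\tilde\cV(\tilde s,t_K,r,\mu^n_r)$ on $\rI_k$. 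Taylor's theorem with integral remainder at order $m+1$ around $r=t_k$, combined with the cancellation $\rL_{t_k}\tilde\cV(\tilde s,t_K,t_k,\mu^n_{t_k})=0$ from part (1), produces
\[
\tilde\cV(\tilde s,t_K,t_{k+1},\mu^n_{t_{k+1}})-\tilde\cV(\tilde s,t_K,t_k,\mu^n_{t_k})=\sum_{j=2}^{m+1}\frac{\eps_n^j}{j!}\rL_{t_k}^j\tilde\cV(\tilde s,t_K,t_k,\mu^n_{t_k})+\frac{1}{(m+1)!}\int_{t_k}^{t_{k+1}}(t_{k+1}-u)^{m+1}\rL_{t_k}^{m+2}\tilde\cV(\tilde s,t_K,u,\mu^n_u)\diff u.
\]
Summing over $k$, reindexing $i=j-1$, using $\sum_{k=0}^{K-1}\tfrac{\eps_n^{i+1}}{i+1}g(t_k)=\int_0^{t_K}(\bar\tau^n_t-t)^ig(\tau^n_t)\diff t$, and applying part (2) to rewrite $\tfrac1{i!}\rL_{t_k}^{i+1}\tilde\cV(\tilde s,t_K,t_k,\mu^n_{t_k})$ as $\varphi_i(\tilde s,t_K,t_k,\mu^n_{t_k})$ yields \zcref{main-thm:proof:repre:eq1}; the $q=0$ case \zcref{main-thm:proof:repre:eq1bis} is identical with $\tilde s$ suppressed.
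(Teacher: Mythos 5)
Your parts (1) and (3) are correct and follow essentially the paper's route: telescoping $\tilde\cV(\tilde s,t_K,\cdot,\mu^n_\cdot)$ over the grid, using \zcref{cal:Ito-lemma} to identify $\frac{\diff^j}{\diff r^j}\tilde\cV(\tilde s,t_K,r,\mu^n_r)=\rL^j_{t_k}\tilde\cV(\tilde s,t_K,r,\mu^n_r)$ on $\rI_k$, and invoking the master-PDE cancellation $\rL_{t_k}\tilde\cV(s,t_k,\mu^n_{t_k})=0$ from \zcref{cal:master-PDE}(3). Your Taylor formula with integral remainder is just a repackaging of the paper's induction: the identity \zcref{initrec} is precisely the inductive step of that Taylor expansion, and your bookkeeping (reindexing $i=j-1$, the factor $\frac{1}{i!}$ versus $\frac{1}{(i+1)!}$, and the conversion $\sum_{k}\frac{\eps_n^{i+1}}{i+1}g(t_k)=\int_0^{t_K}(\bar\tau^n_t-t)^i g(\tau^n_t)\diff t$) is accurate and reproduces \zcref{main-thm:proof:repre:eq1} and \zcref{main-thm:proof:repre:eq1bis}.

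Part (2) is where you diverge and where the genuine gap sits. Your parametrised operator $\mathcal{L}^{\tau,\nu}$ built from the explicit Gaussian Euler kernel $\Xi^{\tau,\nu,r}_y$ is a legitimate alternative to the paper's device (the paper instead proves by induction the representation that $\rL_{t_k}^{m+1}\tilde\cV(s,t,\mu)$ equals a finite sum of terms $\bE[\ell(t_k,Y^{[m+1]}_{t_k},\mu^n_{t_k})\,\upsilon(s,t,Y^{[m+1]}_t,\mu)]$ with $n$-independent smooth $\ell,\upsilon$ and i.i.d.\ copies $Y^{[m+1]}$ of the Euler scheme, and then sets $t=t_k$); with your construction the $n$-independence and the grid identity are indeed automatic. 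But the substantive content of part (2) is precisely the $\sM_b^\infty$ regularity of $\varphi_m$, and this is what you defer (``which I would prove by induction''). Two points there are not routine and are left unaddressed: first, iterating $\mathcal{L}^{\tau,\nu}$ requires showing that each iterate is again smooth in $(r,\mu)$, and in particular that $r\mapsto\int\bE\big[\,\cdot\,(\Xi^{\tau,\nu,r}_y)\big]\diff\nu(y)$ is smooth in $r$ up to $r=\tau$ despite the $\sqrt{r-\tau}$ factor; this needs the It\^o/generator identity that the paper isolates as \zcref{cal:time-diff2} (your appeal in part (1) to ``smooth in $r$ because $X^n_r$ is an affine function'' of $(X^n_{t_k},B_r-B_{t_k})$ glosses over the same point). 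Second, your $\varphi_m(s,\tau,\nu)=\bar\varphi_m(s,\tau,\nu,\tau,\nu)$ restricts a function of two measure arguments to the diagonal, and the L-differentiability of $\nu\mapsto\bar\varphi_m(s,\tau,\nu,\tau,\nu)$ requires a chain-rule argument of the kind carried out in the proof of \zcref{cal:exp}(2)--(3); this too must be tracked through the induction, since $\nu$ enters simultaneously as integration measure, as coefficient argument and as the frozen distribution. Both issues are fillable with the paper's toolbox (and the iterates of $\mathcal{L}^{\tau,\nu}$ have exactly the nested-integral structure that the paper's families $F_{m+1}$ encode), but as written part (2) is a plan rather than a proof, so the argument is incomplete at its technical core.
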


Note that the equality $\rL_{t_k} \tilde \cV (s, t_k, \mu^n_{t_k}) = 0$ in \zcref{main-thm:proof:repre}(1), which is a consequence of the master PDE stated in \zcref{cal:master-PDE}(3), explains why there is no term with $i=0$ in the expansion \zcref{main-thm:proof:repre:eq1}, i.e., no term with order $0$ in $\eps_n$ as seen from the next result. \zcref{main-thm:proof:int} is the key to expand time integrals involving $(\tau^n_t,\bar\tau^n_t)$ like those in the summation from $1$ to $m$ on the right-hand side of \zcref{main-thm:proof:repre:eq1} into powers of $\eps_n$ with coefficients not depending on $n$.

\begin{lemma} \label{main-thm:proof:int}
There exist constants $\{ \beta_j \}_{j \in{\mathbb N}} \subset \bR$ with $\beta_0 = 1$  such that for every $(m,K, i) \in \bN^*\times \llbracket 1, n \rrbracket \times \bN$ and every $f \colon \bT \to \bR$ $m$-times continuously differentiable,
\begin{equation} \label{main-thm:proof:int:eq0}
(i+1) \int_0^{t_K} (\bar \tau^n_t - t)^i f (\tau^n_t) \diff t = \sum_{j=0}^{m-1} \beta_j \eps_n^{i+j} \int_0^{t_K} \partial_t^{j} f (t) \diff t + \cO ( \eps_n^{i+m} ) .
\end{equation}

Above, we use the convention that $\partial_t^{0} f = f$.
\end{lemma}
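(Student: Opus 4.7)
The plan is to reduce the claim to an Euler--Maclaurin-type expansion for a left Riemann sum and then establish that expansion by induction on $m$.

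First, since $\tau^n_t=t_k$ and $\bar \tau^n_t=t_{k+1}$ are both constant on $[t_k,t_{k+1})$,
\[
\int_{t_k}^{t_{k+1}}(\bar \tau^n_t-t)^i f(\tau^n_t)\,\diff t = f(t_k)\int_{t_k}^{t_{k+1}}(t_{k+1}-t)^i\,\diff t = \frac{\eps_n^{i+1}}{i+1}\,f(t_k).
\]
Summing over $k\in\llbracket 0,K-1\rrbracket$, the left-hand side of \zcref{main-thm:proof:int:eq0} equals $\eps_n^i \bigl(\eps_n\sum_{k=0}^{K-1}f(t_k)\bigr)$. The claim is therefore equivalent to exhibiting universal constants $(\beta_j)_{j\in\bN}$ with $\beta_0=1$ such that for every $m\in\bN^*$ and every $m$-times continuously differentiable $f\colon\bT\to\bR$,
\[
\eps_n\sum_{k=0}^{K-1}f(t_k) = \sum_{j=0}^{m-1}\beta_j\,\eps_n^j\int_0^{t_K}\partial_t^jf(t)\,\diff t + \cO(\eps_n^m),
\]
with the $\cO$-constant depending on $f$, $T$, $m$ but uniform in $K\in\llbracket 1,n\rrbracket$.

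I would prove this by induction on $m$. The case $m=1$ follows from the elementary estimate $|\eps_n f(t_k)-\int_{t_k}^{t_{k+1}}f(t)\,\diff t|\le\eps_n^2\|\partial_tf\|_\infty$ summed over at most $T/\eps_n$ intervals. For the inductive step $m\to m+1$, I would apply Taylor's theorem with integral remainder to $f$ around $t_k$ on each $[t_k,t_{k+1}]$, integrate in $t$, and sum over $k$ to obtain
\[
\eps_n\sum_{k=0}^{K-1}f(t_k) = \int_0^{t_K}f(t)\,\diff t - \sum_{j=1}^{m}\frac{\eps_n^{j}}{(j+1)!}\Big(\eps_n\sum_{k=0}^{K-1}\partial_t^jf(t_k)\Big) + \cO(\eps_n^{m+1}).
\]
Then I would plug the inductive hypothesis into each inner Riemann sum, applied at order $m+1-j\le m$ so that the accumulated error remains $\cO(\eps_n^{m+1})$, and regroup powers of $\eps_n$. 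Equating coefficients of $\eps_n^{r}\int_0^{t_K}\partial_t^rf(t)\,\diff t$ for $r\in\llbracket 1,m\rrbracket$ forces
\[
\beta_r = -\sum_{j=1}^{r}\frac{\beta_{r-j}}{(j+1)!},
\]
which, together with $\beta_0=1$, is a linear recursion determining the full sequence $(\beta_j)_{j\in\bN}$ uniquely and independently of $f$, $n$, $K$, and $m$ (one recovers $\beta_j=B_j/j!$ with $B_j$ the Bernoulli numbers, though this identification is not needed).

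The main (and only non-routine) obstacle is verifying this internal consistency: the value of $\beta_r$ produced when one applies the scheme at induction step $m+1$ must coincide with the value produced at any earlier step $m'+1\ge r+1$. This is immediate from the recursion, which depends only on $r$ and on the constants $\beta_0,\ldots,\beta_{r-1}$ determined at previous levels. The remainder bookkeeping is then straightforward, since the Taylor remainder per interval at order $m+1$ is $\cO(\eps_n^{m+2}\|\partial_t^{m+1}f\|_\infty)$ and there are at most $T/\eps_n$ intervals, giving a total error of $\cO(\eps_n^{m+1})$.
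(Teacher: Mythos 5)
Your proposal is correct, but it takes a genuinely different route from the paper. Both arguments start the same way: the exact computation $\int_{t_k}^{t_{k+1}}(\bar\tau^n_t-t)^i f(\tau^n_t)\diff t=\frac{\eps_n^{i+1}}{i+1}f(t_k)$ reduces the claim to an Euler--Maclaurin-type expansion of the left Riemann sum $\eps_n\sum_{k=0}^{K-1}f(t_k)$ (in the paper this appears as Step 1, recast via Fubini as $\frac{\eps_n^i}{i+1}\{\int_0^{t_K}f-\int_0^{t_K}(\bar\tau^n_t-t)f'\}$). From there the paper works with the periodic sawtooth $h_n(t)=\bar\tau^n_t-t$ and the operator $\rL h_n(t)=\int_0^t\{\overline{h_n}-h_n(s)\}\diff s$ of \zcref{prd-fnc}: repeated integration by parts against the iterated primitives $\rL^{j}h_n$ produces the coefficients as the scaled means $\overline{\rL^{j}h_n}=\alpha_j\eps_n^{j+1}$, and the remainder is controlled by $\|\rL^{m-1}h_n\|_\infty=\cO(\eps_n^m)$ --- essentially the periodized-Bernoulli-polynomial proof of Euler--Maclaurin, which yields the coefficients by an explicit closed construction and needs no consistency check. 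You instead Taylor-expand $f$ on each subinterval, bootstrap the inner Riemann sums of $\partial_t^jf$ through the inductive hypothesis, and determine the coefficients by the recursion $\beta_r=-\sum_{j=1}^{r}\beta_{r-j}/(j+1)!$, whose well-posedness (depending only on lower indices) settles the consistency across induction levels; this is more elementary and self-contained (no auxiliary lemma on periodic functions) and directly exhibits the Bernoulli-number structure $\beta_j=B_j/j!$. Your bookkeeping is sound: the per-interval Taylor remainder $\cO(\eps_n^{m+2})$ over at most $T/\eps_n$ intervals, and the inner errors $\cO(\eps_n^{m+1-j})$ weighted by $\eps_n^j/(j+1)!$, are uniform in $K$, and the $\eps_n^i$ prefactor handles the dependence on $i$. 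Note that your value $\beta_1=-\tfrac12$ is indeed what the paper's own Step 2 produces (the coefficient there is $-\overline{h_n}/\eps_n=-\alpha_0$), and in any case the lemma only asserts existence of the sequence with $\beta_0=1$, so this is consistent.
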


\begin{proof}[Proof of \zcref{main-thm}] \label{main-thm:prf}
Applying \zcref{main-thm:proof:repre}(3) with $q=0$ and $\tilde \cU = \cU$, we get smooth functions $\varphi_{(i_1)} \colon \bT \times \sP_2 (\bR^d)  \to \bR$ for $i_1\in\llbracket 1, m \rrbracket$ that do not depend on $n$ and such that
\begin{myalign}
& \cU (\mu^{n}_T) - \cU (\mu_T) \\
& = \cO ( \eps_n^{m+1} ) + \sum_{i_1=1}^{m} \int_{0}^{T} (\bar \tau^n_{s_1} - s_1)^{i_1} \varphi_{(i_1)} (\tau^n_{s_1}, \mu^n_{\tau^n_{s_1}}) \diff s_1 \label{main-thm:proof:int6} \\
& = \begin{myaligned}[t]
& \cO ( \eps_n^{m+1} ) + \sum_{i_1=1}^{m} \int_{0}^{T} (\bar \tau^n_{s_1} - s_1)^{i_1} \varphi_{(i_1)} (\tau^n_{s_1}, \mu_{\tau^n_{s_1}}) \diff s_1 \\
& + \sum_{i_1=1}^{m} \int_{0}^{T} (\bar \tau^n_{s_1} - s_1)^{i_1} \{ \varphi_{(i_1)} (\tau^n_{s_1}, \mu^n_{\tau^n_{s_1}}) - \varphi_{(i_1)} (\tau^n_{s_1}, \mu_{\tau^n_{s_1}}) \} \diff s_1 .
\end{myaligned} 
\end{myalign}

For $i_1\in\llbracket 1, m \rrbracket$, applying \zcref{main-thm:proof:repre}(3) with $(q, t_K, s) = (1, \tau^n_{s_1}, \tau^n_{s_1})$ and $\tilde \cU(s,\mu) = \varphi_{(i_1)}(s,\mu)$, we get smooth functions $\varphi_{(i_1, i_2)} \colon \Delta^2 \times \sP_2 \to \bR$ that do not depend on $n$ such that
\begin{align}
\begin{multlined}[t]
\varphi_{(i_1)} (\tau^n_{s_1}, \mu^n_{\tau^n_{s_1}}) - \varphi_{(i_1)} (\tau^n_{s_1}, \mu_{\tau^n_{s_1}}) \\
= \cO ( \eps_n^{m-i_1+1} ) + \sum_{i_2=1}^{m-i_1} \int_{0}^{\tau^n_{s_1}} (\bar \tau^n_{s_2} - s_1)^{i_2} \varphi_{(i_1, i_2)} (\tau^n_{s_1}, \tau^n_{s_2}, \mu^n_{\tau^n_{s_2}}) \diff s_2,
\end{multlined} 
\end{align}
where there is no term of order $0$ in $\eps_n$ on the right-hand side. When $i_1=m$, there is no sum over $i_2$ in the right-hand-side and $\int_{0}^{T} (\bar \tau^n_{s_1} - s_1)^{m} \{ \varphi_{(m)} (\tau^n_{s_1}, \mu^n_{\tau^n_{s_1}}) - \varphi_{(m)} (\tau^n_{s_1}, \mu_{\tau^n_{s_1}}) \} \diff s_1= \cO ( \eps_n^{m+1} )$. We deduce that
\begin{myalign}
& \cU (\mu^{n}_T) - \cU (\mu_T) = \cO ( \eps_n^{m+1} ) + \sum_{i_1=1}^{m} \int_{0}^{T} (\bar \tau^n_{s_1} - s_1)^{i_1} \varphi_{(i_1)} (\tau^n_{s_1}, \mu_{\tau^n_{s_1}}) \diff s_1 \\
& + \sum_{i_1=1}^{m-1} \sum_{i_2=1}^{m-i_1} \int_{0}^{T} (\bar \tau^n_{s_1} - s_1)^{i_1} \int_{0}^{\tau^n_{s_1}} (\bar \tau^n_{s_2} - s_2)^{i_2} \varphi_{(i_1,i_2)} (\tau^n_{s_1},\tau^n_{s_2}, \mu^n_{\tau^n_{s_2}}) \diff s_2 \diff s_1.
\end{myalign}

Thus
\begin{myalign}
& \cU (\mu^{n}_T) - \cU (\mu_T) \\
& =  \begin{myaligned}[t]
& \cO ( \eps_n^{m+1} ) + \sum_{p=1}^m \sum_{j=1}^2 \sum_{\substack{i_1, \ldots, i_j \in \bN^* \\ i_1 + \cdots + i_j=p}} \int_{0}^{T} (\bar \tau^n_{s_1} - s_1)^{i_1} \ldots \int_{0}^{\tau^n_{s_{j-1}}} (\bar \tau^n_{s_j} - s_j)^{i_j} \\
& \times \varphi_{(i_1,\ldots,i_j)} (\tau^n_{s_1}, \ldots, \tau^n_{s_j}, \mu_{\tau^n_{s_{j}}}) \diff s_j \ldots \diff s_1 \\
& + \sum_{p=2}^m \sum_{\substack{i_1, i_2 \in \bN^* \\ i_1 + i_2=p}} \int_{0}^{T} (\bar \tau^n_{s_1} - s_1)^{i_1} \int_{0}^{\tau^n_{s_1}} (\bar \tau^n_{s_2} - s_2)^{i_2} \\
& \times \{ \varphi_{(i_1, i_2)} (\tau^n_{s_1}, \tau^n_{s_2}, \mu^n_{\tau^n_{s_2}}) - \varphi_{(i_1,i_2)} (\tau^n_{s_1}, \tau^n_{s_2}, \mu_{\tau^n_{s_2}}) \} \diff s_2 \diff s_1 .
\end{myaligned} 
\end{myalign}

We repeat the above procedure by using \zcref{main-thm:proof:repre}(3) to expand $\varphi_{(i_1, \ldots, i_\ell)} (\tau^n_{s_1}, \ldots, \tau^n_{s_\ell}, \mu^n_{\tau^n_{s_\ell}}) - \varphi_{(i_1, \ldots, i_\ell)} (\tau^n_{s_1}, \ldots, \tau^n_{s_\ell}, \mu_{\tau^n_{s_\ell}})$ when $i_1, \ldots, i_\ell \in \bN^*$ are such that $i_1 + \cdots + i_\ell \le m-1$ (i.e. as long as $(\bar \tau^n_{s_1} - s_1)^{i_1}\times\cdots\times (\bar \tau^n_{s_\ell} - s_\ell)^{i_\ell}$ does not provide the desired order $m$ in $\eps_n$) and to put in the $\cO ( \eps_n^{m+1} )$ term the contribution of this difference when $i_1+\cdots+i_\ell=m$. We get smooth functions $\varphi_{(i_1, \ldots, i_j)} \colon \Delta^j \times \sP_2 (\bR^d)  \to \bR$ that are independent of $n$ and such that
\begin{myalign} 
& \cU (\mu^{n}_T) - \cU (\mu_T) \\
& = \begin{myaligned}[t]
& \cO ( \eps_n^{m+1} ) + \sum_{p=1}^m \sum_{j=1}^p \sum_{\substack{i_1,\ldots,i_j \in \bN^* \\ i_1+\cdots+i_j=p}} \int_{0}^{T} (\bar \tau^n_{s_1} - s_1)^{i_1}
\ldots \int_{0}^{\tau^n_{s_{j-1}}} (\bar \tau^n_{s_j} - s_j)^{i_j} \\
& \times \varphi_{(i_1,\ldots,i_j)} (\tau^n_{s_1},\ldots,\tau^n_{s_j}, \mu_{\tau^n_{s_{j}}}) \diff s_j \ldots \diff s_1 .
\end{myaligned}\label{main-thm:proof:int3}
\end{myalign}

By \zcref{main-thm:proof:int}, 
\begin{align}
\int_{0}^{T} (\bar \tau^n_{t} - t)^{i_1} \varphi_{i_1} (\tau^n_t,\mu_{\tau^n_t}) \diff t = \cO ( \eps_n^{m-p+i_1+1} ) + \sum_{k_1=0}^{m-p}\eps_n^{i_1+k_1}\phi_{(i_1; k_1)},
\end{align}
where, for $k_1 \in \llbracket 0, m-p \rrbracket$, 
\begin{align}
\phi_{(i_1;k_1)} \coloneq \frac{\beta_{k_1}}{i_1+1} \int_0^{T} \diff_t^{k_1} \varphi_{(i_1)} (t,\mu_t) \diff t,\label{defphiik}
\end{align}
with $\diff_t^{k_1} \varphi_{(i_1)} (t,\mu_t)$ denoting the $k_1$-th order derivative of the function $t\mapsto \varphi_{(i_1)} (t,\mu_t)$. When $j\ge 2$, still by \zcref{main-thm:proof:int}, 
\begin{myalign}
& \int_{0}^{\tau^n_{s_{j-1}}} (\bar \tau^n_{t} - t)^{i_j} \varphi_{(i_1, \ldots, i_j)} (\tau^n_{s_1}, \ldots, \tau^n_{s_{j-1}}, \tau^n_t, \mu_{\tau^n_t}) \diff t \\
& = \cO ( \eps_n^{m-p+i_j+1} ) + \sum_{k_j=0}^{m-p}\eps_n^{i_j+k_j}\phi_{(i_1, \ldots, i_j; k_j)} (\tau^n_{s_1}, \ldots, \tau^n_{s_{j-1}}) ,
\end{myalign}
where, for $k_j \in \llbracket 0, m-p \rrbracket$, $\phi_{(i_1, \ldots, i_j; k_j)} \colon \Delta^{j-1} \to \bR$ is defined as
\begin{align}
\phi_{(i_1, \ldots, i_j; k_j)} (s_1, \ldots, s_{j-1}) \coloneq \frac{\beta_{k_j}}{i_j+1} \int_0^{s_{j-1}} \diff_t^{k_j} \varphi_{(i_1, \ldots, i_j)} (s_1, \ldots, s_{j-1}, t,\mu_t) \diff t,
\end{align}
where $\diff_t^{k_j} \varphi_{(i_1, \ldots, i_j)} (s_1, \ldots, s_{j-1}, t,\mu_t)$ denotes the $k_j$-th order derivative of the function $t\mapsto\varphi_{(i_1, \ldots, i_j)} (s_1, \ldots, s_{j-1}, t,\mu_t)$. The functions $\phi_{(i_1, \ldots, i_j; k_j)}$ are smooth according to \zcref{cal:exp}(4) and \zcref{par-form}.

To sum up, we are able to get rid of the integral w.r.t $s_j$ in \zcref{main-thm:proof:int3}. Moreover, we have transferred implicit dependence of $I$ on $(\bar \tau^n_t,\tau^n_t)$ to explicit dependence on $\eps_n$ and functions $\{ \phi_{(i_1, \ldots, i_j; k_j)} \}_{ k_j \in \llbracket 0, m-p \rrbracket }$ (not depending on $n$). Along the way, we have a residual of order $\cO ( \eps_n^{m-p+i_j+1} )$. This residual multiplied by the remaining iterated integral will be of order $\cO (\eps_n^{m+1})$. Hence
\begin{myalign} \label{main-thm:proof:int4}
& \cU (\mu^{n}_T) - \cU (\mu_T) \\
&= \begin{myaligned}[t]
& \cO ( \eps_n^{m+1} ) +\sum_{p=1}^m\sum_{k_1=0}^{m-p}\eps_n^{p+k_1}\phi_{(p;k_1)}\\
& +  \sum_{p=1}^m \sum_{j=2}^p \sum_{\substack{i_1,\ldots,i_j \in \bN^* \\ i_1+\cdots+i_j=p}} \sum_{k_j=0}^{m-p} \eps_n^{i_j+k_j} \int_{0}^{T} (\bar \tau^n_{s_1} - s_1)^{i_1} \ldots \\
& \ldots \int_{0}^{\tau^n_{s_{j-2}}} (\bar \tau^n_{s_{j-1}} - s_{j-1})^{i_{j-1}} \phi_{(i_1,\ldots,i_j;k_j)} (\tau^n_{s_1},\ldots,\tau^n_{s_{j-1}}) \diff s_{j-1} \ldots \diff s_1 .
\end{myaligned}
\end{myalign}

For $j\ge 2$, we define by backward induction 
\begin{align}
\phi_{(i_1, \ldots, i_j; k_j, \ldots, k_\ell)}(s_1,\ldots,s_{\ell-1}) & \coloneq \begin{multlined}[t]
\frac{\beta_{k_\ell}}{i_\ell+1}\int_0^{s_{\ell -1}}\partial^{k_\ell}_t\phi_{(i_1, \ldots, i_j; k_j, \ldots, k_{\ell+1})}(s_1,\ldots,s_{\ell-1},t)\diff t \\
\textq{when} \ell\in\llbracket 2, j-1 \rrbracket ,
\end{multlined} \\
\phi_{(i_1, \ldots, i_j; k_j, \ldots, k_1)} & \coloneq \frac{\beta_{k_1}}{i_1+1}\int_0^{T}\partial^{k_1}_t\phi_{(i_1, \ldots, i_j; k_j, \ldots, k_{2})}(t)\diff t ,
\end{align}
when for $\ell\in\llbracket 1, j-1 \rrbracket$, $k_1,\ldots,k_\ell\in{\mathbb N}$ satisfy $k_1+\cdots+k_\ell\le m-(i_1+\cdots+i_j)$.

Repeating the above procedure to get rid of the integrals with respect to $s_{j-1},\ldots,s_1$, we obtain
\begin{myalign} 
& \cU (\mu^{n}_T) - \cU (\mu_T) \\
& = \begin{myaligned}[t]
& \cO ( \eps_n^{m+1} ) + \sum_{p=1}^m \sum_{j=1}^p \sum_{\substack{i_1,\ldots,i_j \in \bN^* \\ i_1+\cdots+i_j=p}} \sum_{k_j=0}^{m-p} \eps_n^{i_j+k_j} \sum_{k_{j-1}=0}^{m-p-k_j} \eps_n^{i_{j-1}+k_{j-1}} \ldots \\
& \ldots \sum_{k_{1}=0}^{m-p-(k_j+\cdots+k_2)} \eps_n^{i_{1}+k_{1}}\phi_{(i_1, \ldots, i_j; k_j, \ldots, k_1)} 
\end{myaligned} \\
& = \cO ( \eps_n^{m+1} ) + \sum_{p=1}^m \sum_{q=0}^{m-p} \eps_n^{p+q} \sum_{j=1}^p \sum_{\substack{i_1,\ldots,i_j \in \bN^* \\ i_1+\cdots+i_j=p}} \sum_{\substack{k_1,\ldots,k_j \in \bN \\ k_1+\cdots+k_j=q}} \phi_{(i_1, \ldots, i_j; k_j, \ldots, k_1)} \\
& = \cO \left( \frac 1{n^{m+1}}\right) + \sum_{r=1}^m \frac{T^r}{n^r}\sum_{p=1}^r  \sum_{j=1}^p \sum_{\substack{i_1,\ldots,i_j \in \bN^* \\ i_1+\cdots+i_j=p}} \sum_{\substack{k_1,\ldots,k_j \in \bN \\ k_1+\cdots+k_j=r-p}} \phi_{(i_1, \ldots, i_j; k_j, \ldots, k_1)} .\label{main-thm:proof:int5}
\end{myalign}

This is the desired expansion.
\end{proof}

\subsection{Proof of \zcref{main-thm:proof:repre}}

The proof relies on the following result where, for $r\in\bN^*$, we denote $Y^{[r]}_t \coloneq (Y^{(1)}_t, \ldots , Y^{(r)}_t)$ where $\{ (Y^{(i)}_t, B^{(i)}_t)_{t \in \bT} \}_{i \ge 1}$ are i.i.d copies of $(X^n_t, B_t)_{t \in \bT }$. For $(k, i) \in \llbracket 0, n-1  \rrbracket \times \bN^*$, we construct $(b^{(i)}_k, \sigma^{(i)}_k, a^{(i)}_k)$ by replacing $X^n_{t_k}$ in \zcref{main-thm:proof:eq1} with $Y^{(i)}_{t_k}$. 

\begin{lemma} \label{cal:time-diff2}
Let $(q, r, k) \in \bN \times \bN \times \llbracket 0, n-1 \rrbracket$ with $q \ge 2$. Let $U \colon \rI_k \times (\bR^d)^{2r} \times \sP_2 (\bR^d)  \to \bR$ be of class $\sM_b^{q}$. Assume that $(b, \sigma)$ are of class $\sM_b^{q-2}$. We define $V \colon \rI_k \times \sP_2 (\bR^d)  \to \bR$ by $V(t, \mu) \coloneq \bE [U(t, Y^{[r]}_t, Y^{[r]}_{t_k}, \mu)]$. Then $V$ is of class $\sM_b^{q}$. With notation $U(t, y_1, \ldots, y_r, x_1, \ldots, x_r, \mu)$, we have
\[
\partial_t V(t, \mu) = \begin{myaligned}[t]
& \bE \Big [ \partial_t U(t, Y^{[r]}_t, Y^{[r]}_{t_k}, \mu) + \sum_{i=1}^r b_k^{(i)} \cdot \nabla_{y_i} U(t, Y^{[r]}_t, Y^{[r]}_{t_k}, \mu) \\
& + \frac{1}{2} \sum_{i=1}^r a_k^{(i)} : \nabla_{y_i}^2 U(t, Y^{[r]}_t, Y^{[r]}_{t_k}, \mu) \Big ] .
\end{myaligned}
\]
\end{lemma}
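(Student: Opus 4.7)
The plan is to derive the formula for $\partial_t V$ by a direct application of Itô's lemma, and to obtain the $\sM_b^q$ regularity by combining the Leibniz-type rule of \zcref{cal:leibniz} for measure derivatives with an induction on $q$ that uses the formula itself to bootstrap time regularity.

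First, I would fix $\mu \in \sP_2(\bR^d)$ and apply the standard multidimensional Itô formula to the process $U(t, Y^{[r]}_t, Y^{[r]}_{t_k}, \mu)$ on $\rI_k$. On $\rI_k$ one has $\diff Y^{(i)}_t = b^{(i)}_k \diff t + \sigma^{(i)}_k \diff B^{(i)}_t$ with $\cF_{t_k}$-measurable coefficients; the Brownian motions $\{B^{(i)}\}$ are independent so cross-terms in the quadratic variation vanish, $\mu$ is a deterministic parameter, and $U$ is $C^{1,2}$ in $(t, y_1, \ldots, y_r)$ since $U \in \sM_b^q$ with $q \ge 2$. Taking expectation kills the stochastic integrals, giving
\[
V(t, \mu) - V(t_k, \mu) = \int_{t_k}^t \bE\Big[\partial_s U + \sum_{i=1}^r b^{(i)}_k \cdot \nabla_{y_i} U + \tfrac{1}{2}\sum_{i=1}^r a^{(i)}_k : \nabla_{y_i}^2 U\Big] \diff s ,
\]
evaluated at $(s, Y^{[r]}_s, Y^{[r]}_{t_k}, \mu)$, and differentiating in $t$ yields the claimed expression for $\partial_t V$.

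Next, I would establish the regularity. For pure measure derivatives at fixed $t$, the family $\omega \mapsto U(t, Y^{[r]}_t(\omega), Y^{[r]}_{t_k}(\omega), \cdot)$ is of class $\sM_b^q$ with a constant $C_\omega = C$ uniform in $\omega$ (as $U \in \sM_b^q$ has globally bounded Lipschitz derivatives), so \zcref{cal:leibniz} gives $V(t, \cdot) \in \sM_b^q$ with $\rD^{(p, (), 0, \ell)} V(t, \mu; v) = \bE[\rD^{(p, (), 0, \ell)} U(t, Y^{[r]}_t, Y^{[r]}_{t_k}, \mu; v)]$. The full joint $\sM_b^q$ regularity in $(t, \mu)$ then follows by induction on $q \ge 2$. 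The base case $q = 2$ is immediate from the above together with the formula for $\partial_t V$, which is bounded and continuous because $b^{(i)}_k, a^{(i)}_k$ are bounded and $\partial_t U, \nabla_{y_i} U, \nabla_{y_i}^2 U$ are continuous bounded. For the inductive step, rewrite
\[
\partial_t V(t, \mu) = \bE[\tilde U(t, Y^{[r]}_t, Y^{[r]}_{t_k}, \mu)]
\]
where
\[
\tilde U(t, y_1, \ldots, y_r, x_1, \ldots, x_r, \mu) \coloneq \partial_t U + \sum_i b(t_k, x_i, \mu^n_{t_k}) \cdot \nabla_{y_i} U + \tfrac{1}{2} \sum_i a(t_k, x_i, \mu^n_{t_k}) : \nabla_{y_i}^2 U .
\]
Since $b, \sigma \in \sM_b^{q-2}$ and $t_k, \mu^n_{t_k}$ are frozen, the factors $b(t_k, \cdot, \mu^n_{t_k})$ and $a(t_k, \cdot, \mu^n_{t_k})$ are of class $\sM_b^{q-2}$ in $x_i$; combined with $U \in \sM_b^q$ (hence $\partial_t U, \nabla_{y_i} U, \nabla_{y_i}^2 U \in \sM_b^{q-2}$), \zcref{cal:exp}(1) yields $\tilde U \in \sM_b^{q-2}$. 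The induction hypothesis applied to $\tilde U$ gives $\partial_t V \in \sM_b^{q-2}$, which combined with the pure measure-regularity of $V$ and the commutation of $\partial_t$ with measure derivatives from \zcref{cal:Clairaut-lm}(3) delivers $V \in \sM_b^q$.

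The main obstacle is the bookkeeping across the induction: one must verify at each step that the mixed time/measure derivatives of $V$ equal the expected expressions, that uniform bounds propagate, and that the commutation of $\partial_t$ with $\partial_\mu$ is legitimate. The key structural observation making the induction close is that $b^{(i)}_k$ and $a^{(i)}_k$ are smooth functions of the already-present auxiliary variables $Y^{(i)}_{t_k}$ (with $t_k$ and $\mu^n_{t_k}$ frozen): the class of representations $(t, \mu) \mapsto \bE[W(t, Y^{[r]}_t, Y^{[r]}_{t_k}, \mu)]$ is therefore stable under $\partial_t$, at the cost of dropping two orders of $\sM_b$ regularity per time derivative, which is precisely the budget built into the definition of $\sM_b^q$.
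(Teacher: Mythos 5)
Your proposal is correct and follows essentially the paper's own route: Itô's lemma on $\rI_k$ with the coefficients frozen at $t_k$ to produce the generator formula, plus the Leibniz rule \zcref{cal:leibniz} for the measure derivatives, losing two orders of $\sM_b$ regularity per time derivative; the paper merely organizes the bootstrap by iterating the operator $\rL^m$ on the measure-derivative kernels $W_{\mu,v}=\partial_\mu^p U(\cdot,\cdot,\cdot,\mu;v)$ with $(\mu,v)$ frozen as parameters, whereas you keep $\mu$ as a live argument of $\tilde U$ and induct on $q$. The only point to tidy is the start of your induction: for $q-2\in\{0,1\}$ the lemma itself is not available as an induction hypothesis, but the required $\sM_b^{q-2}$ regularity of $(t,\mu)\mapsto\bE[\tilde U(t,Y^{[r]}_t,Y^{[r]}_{t_k},\mu)]$ then involves no time derivatives and follows from \zcref{cal:leibniz} together with the $L^2$-continuity of $t\mapsto Y^{[r]}_t$, exactly as in your measure-only step.
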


\begin{proof}
By \zcref{cal:leibniz}, $\sP_2(\bR^d)\ni\mu \mapsto V(t, \mu)$ is of class $\sM_b^{q}$, and it holds for $(p, v) \in \llbracket 1, q \rrbracket \times (\bR^d)^p$ that $\partial_\mu^p V (t, \mu; v) = \bE [ W_{\mu,v} (t, Y^{[r]}_t, Y^{[r]}_{t_k})]$ where $W_{\mu,v} \colon \rI_k \times (\bR^d)^{2r} \to (\bR^d)^{\otimes p}$ defined as $W_{\mu,v} (t, y, x) \coloneq \partial_\mu^p U (t, y, x, \mu; v)$ is such that $(\mu,v, t, y, x) \mapsto W_{\mu,v} (t, y, x)$ is of class $\sM_b^{q-p}$.

Let us assume that $p\le q-2$. To deal with the time derivatives, we define $\rL W_{\mu,v} \colon \rI_k \times (\bR^d)^{2r} \to (\bR^d)^{\otimes p}$ by $[\rL W_{\mu,v}]_i \coloneq \rL [W_{\mu,v}]_i$ for $i \in \llbracket 1, d \rrbracket^p$, i.e., component-wise with the operator $\rL$ acting on $W \colon \rI_k \times (\bR^d)^{2r} \to \bR$ of class $\sM_b^2$ by
\[
\rL W (t, y, x) \coloneq \begin{myaligned}[t]
& \partial_t W (t, y, x) + \sum_{i=1}^r b (t_k, x_i, \mu^n_{t_k}) \cdot \nabla_{y_i} W (t, y, x) \\
& + \frac{1}{2} \sum_{i=1}^r a(t_k, x_i, \mu^n_{t_k}) : \nabla_{y_i}^2 W (t, y, x) ,
\end{myaligned}
\]
where $y= (y_1, \ldots, y_r)$ and $x = (x_1, \ldots, x_r)$ belong to $(\bR^d)^r$. Then $(\mu,v,t,y,x)\mapsto\rL W_{\mu,v} (t,y,x)$ is of class $\sM_b^{q-p-2}$. Applying Itô's lemma and taking expectations, we get that 
\[
\forall t\in\rI_k,\;\partial_\mu^p V (t, \mu; v)=\partial_\mu^p V (t_k, \mu; v)+\int_{t_k}^t\bE [ \rL W_{\mu,v} (s, Y^{[r]}_s, Y^{[r]}_{t_k}) ] \diff s ,
\]
with $\rI_k\ni s\mapsto \bE [ \rL W_{\mu,v} (s, Y^{[r]}_s, Y^{[r]}_{t_k}) ]$ continuous. Hence $\rI_k\ni t\mapsto \partial_\mu^p V (t, \mu; v)$ is continuously differentiable and such that $\partial_t\partial_\mu^p V (t, \mu; v)=\bE [ \rL W_{\mu,v} (t, Y^{[r]}_t, Y^{[r]}_{t_k}) ]$.

When $m\le \lfloor \frac{q-p}2\rfloor$, we repeat the above procedure by denoting by $\rL^m$ the $m$-times composition of $\rL$. We get that $(\mu,v,t,y,x)\mapsto\rL^m W_{\mu,v} (t,y,x)$ is of class $\sM_b^{q-p-2m}$ and $\rI_k\ni t\mapsto \partial_\mu^p V (t, \mu; v)$ is $m$-times continuously differentiable with $\partial^m_t\partial_\mu^p V (t, \mu; v)=\bE [ \rL^m W_{\mu,v} (t, Y^{[r]}_t, Y^{[r]}_{t_k}) ]$. The required smoothness of $\partial_t^m \partial_\mu^p V (t, \mu; v) $ in $v$ follows from Leibniz integral rule. This completes the proof.
\end{proof}

\begin{proof}[Proof of \zcref{main-thm:proof:repre}] \label{main-thm:proof:repre:prf}
\begin{enumerate}
\item By \zcref{cal:master-PDE}(3) and \zcref{cal:time-diff2}, $\rI_k \times \sP_2(\bR^d) \ni (t,\mu) \mapsto\rL^m_{t_k} \tilde \cV (s,t,\mu)$ is smooth and $\rL_{t_k} \tilde \cV (s, t_k, \mu^n_{t_k}) = 0$.

\item We are going to check by induction on $n\in\bN$ the following smoothness-representation properties that will be called \MakeLinkTarget*{main-thm:proof:sm-rp}\textbf{(P)}:  there exists a finite collection $F_{m+1}$ of pairs $(\ell, \upsilon)$ with
\begin{itemize}
\item the functions $\ell \colon \bT \times (\bR^d)^{m+1} \times \sP_2 (\bR^d)  \to \bR$ and $\upsilon \colon \Delta^{q+1} \times (\bR^d)^{m+1} \times \sP_2 (\bR^d)  \to \bR$ depend on $(m, b, \sigma, \tilde \cU)$ but not on $n$ and are of class $\sM_b^\infty$,

\item for every $k\in \bN$  with $k \le n-1$ and $(s, t, \mu) \in \Delta^q_{t_{k+1}} \times \rI_k \times \sP_2(\bR^d)$,
\begin{equation} \label{main-thm:proof:repre:eq0:a}
\rL_{t_k}^{m+1} \tilde \cV (s, t, \mu) = \sum_{(\ell, \upsilon) \in F_{m+1}} \bE \big [ \ell (t_k, Y^{[m+1]}_{t_k}, \mu^n_{t_k}) \upsilon (s, t, Y^{[m+1]}_t, \mu ) \big ] .
\end{equation}
\end{itemize}

To deduce the desired representation of  $\rL_{t_k}^{m+1} \tilde \cV  (s, t_k, \mu^n_{t_k})$, we remark that when $t=t_k$ then $Y^{[m+1]}_t=Y^{[m+1]}_{t_k}=(Y^{(1)}_{t_k},\ldots,Y^{(m+1)}_{t_k})$ with coordinates i.i.d according to $\mu^n_{t_k}$. This ensures
\[
\frac 1{m!}\rL_{t_k}^{m+1} \tilde \cV  (s, t_k, \mu^n_{t_k}) = \varphi_m (s, t_k, \mu^n_{t_k}) ,
\]
where $\varphi_m$ is defined as
\[
\varphi_{m} (s, t, \mu) \coloneq\frac 1{m!} \bE \sum_{(\ell, \upsilon) \in F_{m+1}} \big [ \ell (t, Z_1, \ldots, Z_{m+1}, \mu ) \upsilon (s, t, Z_1, \ldots, Z_{m+1}, \mu ) \big ] ,
\]
with $Z_1, \ldots, Z_{m+1}$ i.i.d according to $\mu$. By \zcref{cal:exp}(1), $\ell \upsilon$ is of class $\sM_b^\infty$. By \zcref{cal:exp}(3), $\varphi_{m}$ is of class $\sM_b^\infty$.

Let us now prove \hyperlink{main-thm:proof:sm-rp}{\textbf{(P)}}. By \zcref{main-thm:proof:diff-oper}, for $(s,t,\mu)\in\Delta^q_{t_{k+1}}\times\rI_k\times\sP_2(\bR^d)$,
\begin{equation}
\rL_{t_k} \tilde \cV (s, t, \mu) = \begin{myaligned}[t]
& \partial_t \tilde \cV (s, t, \mu) + \bE \Big [ \sum_{{i}=1}^d [b]_{i} (t_k, Y^{(1)}_{t_k}, \mu^n_{t_k}) [\partial_\mu \tilde \cV]_{i} (s, t, \mu; Y^{(1)}_t) \\
& + \frac{1}{2} \sum_{{i},{j}=1}^d [a]_{({i}, {j})} (t_k, Y^{(1)}_{t_k}, \mu^n_{t_k}) [\nabla_{v_1} \partial_\mu \tilde \cV]_{({i}, {j})} (s, t, \mu; Y^{(1)}_t) \Big ].
\end{myaligned}
\end{equation}

Then \hyperlink{main-thm:proof:sm-rp}{\textbf{(P)}} holds for $m=0$ with
\begin{align}
F_1 = \{ &( 1  , \partial_t \tilde \cV (s, t, \mu)) ; ( [b]_{i} (t, y, \mu)  ,  [\partial_\mu \tilde \cV]_{i} (s, t, \mu; y_1) ) ;
 \\
&( [a]_{({i}, {j})} (t, y, \mu)  ,  [ \nabla_{v_1} \partial_\mu \tilde \cV]_{(i, {j})} (s, t, \mu; y_1) ) \}_{1 \le i, j \le d} .
\end{align}

Next we proceed by induction. Assume that \hyperlink{main-thm:proof:sm-rp}{\textbf{(P)}} holds for $m$. For $(\ell, \upsilon) \in F_{m+1}$, we define $V_{\ell, \upsilon} \colon \Delta^{q}_{t_{k+1}} \times \rI_k \times \sP_2 (\bR^d)  \to \bR$ by
\[
V_{\ell, \upsilon} (s, t, \mu) \coloneq \bE \big [ \ell (t_k, Y^{[m+1]}_{t_k}, \mu^n_{t_k}) \upsilon (s, t, Y^{[m+1]}_t, \mu ) \big ] .
\]

We denote $\ell (t, y_1, \ldots, y_{m+1}, \mu)$ and $\upsilon (s, t, y_1, \ldots, y_{m+1}, \mu)$. By \zcref{cal:time-diff2},
\begin{equation}
\partial_t V_{\ell, \upsilon} (s, t, \mu) = \begin{myaligned}[t]
& \bE \Big [ \ell (t_k, Y^{[m+1]}_{t_k}, \mu^n_{t_k}) \partial_t \upsilon (s, t, Y^{[m+1]}_t, \mu ) \\
& + \sum_{i=1}^{m+1} \ell (t_k, Y^{[m+1]}_{t_k}, \mu^n_{t_k}) b_k^{(i)} \cdot \nabla_{y_i} \upsilon (s, t, Y^{[m+1]}_t, \mu ) \\
& + \frac{1}{2} \sum_{i=1}^{m+1} \ell (t_k, Y^{[m+1]}_{t_k}, \mu^n_{t_k}) a_k^{(i)} : \nabla_{y_i}^2 \upsilon (s, t, Y^{[m+1]}_t, \mu ) \Big ] .
\end{myaligned}
\end{equation}

By \zcref{cal:leibniz} and the freezing lemma,
\[
\begin{myaligned}[t]
& \bE [ b_k \cdot \partial_\mu V_{\ell, \upsilon} (s, t, \mu; X^n_t) ] \\
& = \bE \Big [ \ell (t_k, Y^{[m+1]}_{t_k}, \mu^n_{t_k})   b_k^{(m+2)} \cdot \partial_\mu \upsilon (s, t, Y^{[m+1]}_t, \mu; Y^{(m+2)}_t)  \Big ] ,
\end{myaligned}
\]
and
\[
\begin{myaligned}[t]
& \bE \Big [ \frac{1}{2} a_k : \nabla_v \partial_\mu V_{\ell, \upsilon} (s, t, \mu; X^n_t) \Big ] \\
& = \bE \Big [ \frac{1}{2} \ell (t_k, Y^{[m+1]}_{t_k}, \mu^n_{t_k})   a_k^{(m+2)} : \nabla_v \partial_\mu \upsilon (s, t, Y^{[m+1]}_t, \mu ; Y^{(m+2)}_t)  \Big ] .
\end{myaligned}
\]

Recall that $b_k^{(i)} = b(t_k, Y^{(i)}_{t_k}, \mu^n_{t_k})$ and $a_k^{(i)} = a(t_k, Y^{(i)}_{t_k} , \mu^n_{t_k})$. Then
\[
\rL_{t_k} V_{\ell, \upsilon} (s, t, \mu) = \sum_{(\tilde \ell, \tilde \upsilon) \in F_{m+2}^{(\ell, \upsilon)}} \bE [ \tilde \ell (t_k, Y^{[m+2]}_{t_k}, \mu^n_{t_k}) \tilde  \upsilon (s, t, Y^{[m+2]}_t, \mu ) ] ,
\]
where
\begin{align}
F_{m+2}^{(\ell, \upsilon)} \coloneq \{ &( \ell (t, y_1, \ldots, y_{m+1}, \mu)  , \partial_t \upsilon (s, t, y_1, \ldots, y_{m+1}, \mu)) ; \\
&( \ell (t, y_1, \ldots, y_{m+1}, \mu) [b]_{i'} (t, y_i, \mu)  ,   [\nabla_{y_i} \upsilon]_{i'} (s, t, y_1, \ldots, y_{m+1}, \mu ) ) ; \\
&( \frac{1}{2} \ell (t, y_1, \ldots, y_{m+1}, \mu) [a]_{(i', j')} (t, y_i, \mu)  ,   [\nabla_{y_i}^2 \upsilon]_{(i', j')} (s, t, y_1, \ldots, y_{m+1}, \mu ) ) ; \\
&( \ell (t, y_1, \ldots, y_{m+1}, \mu) [b]_{i'} (t, y_{m+2}, \mu)  ,   [\partial_\mu \upsilon]_{i'} (s, t, y_1, \ldots, y_{m+1}, \mu;  y_{m+2}) ) ; \\
&( \frac{1}{2} \ell (t, y_1, \ldots, y_{m+1}, \mu) [a]_{(i', j')} (t, y_{m+2}, \mu)  ,   [\nabla_{y_{m+2}} \partial_\mu \upsilon]_{(i', j')} (s, t, y_1, \ldots, y_{m+1}, \mu; y_{m+2} ) )\\& \}_{1 \le i', j' \le d ; 1 \le i \le m+1} .
\end{align}

Then
\[
\rL_{t_k}^{m+2} \tilde \cV (s, t, \mu) = 
\sum_{(\ell, \upsilon) \in F_{m+1}} \rL_{t_k} V_{\ell, \upsilon} (s, t, \mu) .
\]

Thus \hyperlink{main-thm:proof:sm-rp}{\textbf{(P)}} holds for $m+1$ with
\[
F_{m+2} = \bigcup_{(\ell, \upsilon) \in F_{m+1}} F_{m+2}^{(\ell, \upsilon)} .
\]

\item Let $q\ge 1$ and $\tilde s\in\Delta^{q-1}_{t_K}$. We now prove by induction that the following rewriting of \zcref{main-thm:proof:repre:eq1} holds for $m\in\bN$
\begin{myalign} 
& \tilde \cU (\tilde s,t_K, \mu^{n}_{t_K}) - \tilde \cU (\tilde s,t_K, \mu_{t_K}) \\
& = \begin{myaligned}[t]
& \sum_{i=1}^{m} \frac{1}{i!}\int_{0}^{t_K} (\bar \tau^n_t - t)^i \rL_{\tau^n_t}^{i+1} \tilde \cV  (\tilde s,t_K, \tau^n_t, \mu^n_{\tau^n_t}) \diff t \\
& + \frac{1}{(m+1)!} \int_{0}^{t_K} (\bar \tau^n_t - t)^{m+1} \rL_{\tau^n_t}^{m+2} \tilde \cV  (\tilde s,t_K, t, \mu^n_t) \diff t,
\end{myaligned} \label{main-thm:proof:repre:eq1:a}
\end{myalign}
with the first term on the right-hand side equal to $0$ when $m=0$. The proof of \zcref{main-thm:proof:repre:eq1bis} for $q=0$ is the same.

To see that \zcref{main-thm:proof:repre:eq1:a} implies the same formula with $m$ replaced by $m+1$, we remark that for $m\in\{-1\}\cup\bN$ and $s\in\Delta^q_{t_K}$, by applying \zcref{cal:Ito-lemma} with $U(t, \mu)=\rL_{t_k}^{m+2} \tilde \cV (s, t, \mu)$ and then Fubini's theorem,
\begin{equation}
\begin{myaligned}
&  \int_{0}^{t_K} (\bar \tau^n_t - t)^{m+1} \rL_{\tau^n_t}^{m+2} \tilde \cV  (s, t, \mu^n_t) \diff t-\int_{0}^{t_K} (\bar \tau^n_t - t)^{m+1} \rL_{\tau^n_t}^{m+2} \tilde \cV  (s, \tau^n_t, \mu^n_{\tau^n_t}) \diff t\\
&=\sum_{k=0}^{K-1} \int_{\rI_k} (t_{k+1} - t)^{m+1} \{ \rL_{t_k}^{m+2} \tilde \cV  (s, t, \mu^n_t) - \rL_{t_k}^{m+2} \tilde \cV  (s, t_k, \mu^n_{t_k}) \} \diff t\\
&=\sum_{k=0}^{K-1} \int_{\rI_k} (t_{k+1} - t)^{m+1} \int_{t_k}^t \rL_{t_k}^{m+3} \tilde \cV  (s, r, \mu^n_r) \diff r \diff t\\
&= \frac 1{m+2}\sum_{k=0}^{K-1} \int_{\rI_k} (t_{k+1} - r)^{m+2} \rL_{t_k}^{m+3} \tilde \cV  (s, r, \mu^n_r) \diff r\\
&=\frac 1{m+2}\int_{0}^{t_K} (\bar \tau^n_t - t)^{m+2} \rL_{\tau^n_t}^{m+3} \tilde \cV  (s, t, \mu^n_t) \diff t.\label{initrec}
\end{myaligned}
\end{equation}

Recall that $\bar \tau^n_t = t_{k+1}$ if $t \in [t_k, t_{k+1})$ for some $k \in \llbracket 0, n-1  \rrbracket$. To check that \zcref{main-thm:proof:repre:eq1:a} holds for $m=0$, we first remark that  by the definition of $\tilde \cV$, then the application of \zcref{cal:Ito-lemma} with $U(t, \mu)=\tilde \cV (\tilde s,t_K, t, \mu)$ and the equality $\rL_{\tau^n_t} \tilde \cV (s,\tau^n_t, \mu^n_{\tau^n_t})=0$, 
\begin{myalign}\label{order1}
& \tilde \cU (\tilde s,t_K, \mu^{n}_{t_K}) - \tilde \cU (\tilde s,t_K,\mu_{t_K}) \\
& = \tilde \cU (\tilde s,t_K, \lbrbrak X_{t_K}^{t_K, \mu^{n}_{t_K}} \rbrbrak) - \tilde \cU (\tilde s,t_K, \lbrbrak X_{t_K}^{0, \nu} \rbrbrak)\\&=\tilde \cV (\tilde s, t_K,t_K, \mu^n_{t_K}) - \tilde \cV (\tilde s,t_K, 0, \nu) \\&= \sum_{k=0}^{K-1} \bigl \{ \tilde \cV (\tilde s,t_K, t_{k+1}, \mu^n_{t_{k+1}}) - \tilde \cV (\tilde s,t_K, t_{k}, \mu^n_{t_{k}}) \bigr \} \\
& = \sum_{k=0}^{K-1} \int_{\rI_k} \rL_{t_k} \tilde \cV (\tilde s,t_K, t, \mu^n_t) \diff t\\ & = \int_0^{t_K}\rL_{\tau^n_t} \tilde \cV (\tilde s,t_K, t, \mu^n_t)\diff t-\int_0^{t_K}\rL_{\tau^n_t} \tilde \cV (\tilde s,t_K,\tau^n_t, \mu^n_{\tau^n_t})\diff t.
\end{myalign}

With \zcref{initrec} for $m=-1$, we deduce \zcref{main-thm:proof:repre:eq1:a} for $m=0$.  This completes the proof.
\end{enumerate}
\end{proof}

\begin{remark} \label{remark1}
We can get a more explicit form of $C_1$ in \zcref{main-thm:cnst} as follows. By \zcref{main-thm:proof:int5} written with $m=1$, $C_1=T\phi_{(1;0)}$ while by \zcref{defphiik} and since $\beta_0=1$ according to \zcref{main-thm:proof:int}, $\phi_{(1;0)}=\frac{1}{2} \int_0^T \varphi_{(1)} (t,\mu_t) \diff t$. According to \zcref{main-thm:proof:repre}(2) and its \hyperref[main-thm:proof:repre:prf]{proof}, to calculate $\varphi_{(1)}(t,\mu_t)$, it is enough to compute $\rL^2_{\tau^n_t}\cV(t,\mu)$ with $\cV(t,\mu)=\cU(\lbrbrak X_{T}^{t, \mu} \rbrbrak)$ and replace in \zcref{main-thm:proof:repre:eq0:a} both $(Y^{(1)}_{\tau^n_t},Y^{(2)}_{\tau^n_t})$ and $(Y^{(1)}_{t},Y^{(2)}_{t})$ by $(Z_1,Z_2)$ with coordinates i.i.d according to $\mu$. For $k \in \llbracket 0, n-1 \rrbracket$ and $t\in \rI_k$, we have
\begin{align}
\rL_{t_k}\cV(t,\mu)&= \partial_t \cV (t, \mu) + \bE \Big [ b_k \cdot \partial_\mu \cV (t, \mu ; X^n_t) + \frac{1}{2} a_k : \nabla_{v} \partial_\mu \cV (t, \mu ; X^n_t) \Big ], \\ 
\rL^2_{t_k}\cV(t, \mu) &= \partial_t \rL_{t_k}\cV(t, \mu) + \bE \Big [ b_k \cdot \partial_\mu \rL_{t_k}\cV (t, \mu ; X^n_t) + \frac{1}{2} a_k : \nabla_{v} \partial_\mu \rL_{t_k}\cV (t, \mu ; X^n_t) \Big ].
\end{align}

To make $\rL^2_{t_k}\cV(t, \mu)$ explicit, we recall some tensor operations. For $x, y \in (\bR^d)^{\otimes m}$, their inner product is defined as $x \odot_m y \coloneq \sum_{i \in \llbracket 1, d \rrbracket^m} [x]_i [y]_i$. For $x \in (\bR^d)^{\otimes m}$ and $y \in (\bR^d)^{\otimes n}$, their tensor product $x \otimes y \in (\bR^d)^{\otimes (m+n)}$ is defined as $[x \otimes y]_{(i, j)} \coloneq [x]_i [y]_j$ for $i \in \llbracket 1, d \rrbracket^m$ and $j \in \llbracket 1, d \rrbracket^n$.

By \zcref{cal:time-diff2} and Leibniz integral rule,
\[
\partial_t  \rL_{t_k}\cV(t, \mu) = \begin{myaligned}[t]
& \partial_t^2 \cV (t, \mu) + \bE \Big [ b_k^{(1)} \cdot \partial_t \partial_\mu \cV (t, \mu; Y^{(1)}_t) + b_k^{(1)} \otimes b_k^{(1)} : \nabla_{v_1} \partial_\mu \cV (t, \mu; Y^{(1)}_t) \\
& + \frac{1}{2} b_k^{(1)} \otimes a_k^{(1)} \odot_3 \nabla_{v_1}^2 \partial_\mu \cV (t, \mu; Y^{(1)}_t) + \frac{1}{2} a_k^{(1)} : \partial_t \nabla_{v_1} \partial_\mu \cV (t, \mu; Y^{(1)}_t) \\
& + \frac{1}{2} a_k^{(1)} \otimes b_k^{(1)} \odot_3 \nabla_{v_1}^2 \partial_\mu \cV (t, \mu; Y^{(1)}_t) + \frac{1}{4} a_k^{(1)} \otimes a_k^{(1)} \odot_4 \nabla_{v_1}^3 \partial_\mu \cV (t, \mu; Y^{(1)}_t) \Big ] .
\end{myaligned}
\]

By \zcref{cal:leibniz},
\begin{myalign}
&\bE [ b_k \cdot \partial_\mu \rL_{t_k}\cV(t, \mu; X^n_t) ] \\
& = \begin{myaligned}[t]
& \bE \Big [ b_k^{(1)} \cdot \partial_\mu \partial_t \cV (t, \mu; Y^{(1)}_t) + b_k^{(1)} \otimes b_k^{(2)} : \partial_\mu^2 \cV (t, \mu; Y^{[2]}_t) \\
& + \frac{1}{2} a_k^{(1)} \otimes b_k^{(2)} \odot_3 \partial_\mu \nabla_{v_1} \partial_\mu \cV (t, \mu; Y^{[2]}_t) \Big ] ,
\end{myaligned} 
\end{myalign}
and
\begin{myalign}
& \bE \Big [ \frac{1}{2} a_k : \nabla_v \partial_\mu \rL_{t_k}\cV(t, \mu; X^n_t) \Big ] \\
& = \begin{myaligned}[t]
& \bE \Big [ \frac{1}{2} a_k^{(1)} : \nabla_{v_1} \partial_\mu \partial_t \cV (t, \mu; Y^{(1)}_t) + \frac{1}{2} b_k^{(1)} \otimes a_k^{(2)} \odot_3 \nabla_{v_2} \partial_\mu^2 \cV (t, \mu; Y^{[2]}_t) \\
& + \frac{1}{4} a_k^{(1)} \otimes a_k^{(2)} \odot_4 \nabla_{v_2} \partial_\mu \nabla_{v_1} \partial_\mu \cV (t, \mu; Y^{[2]}_t) \Big ] .
\end{myaligned}
\end{myalign}

Recall that $Y^{[2]}_t = (Y^{(1)}_t, Y^{(2)}_t)$ is an i.i.d sample of $X^n_t$ and that $b_k^{(i)} = b(t_k, Y^{(i)}_{t_k}, \mu^n_{t_k})$ and $a_k^{(i)} = a(t_k, Y^{(i)}_{t_k} , \mu^n_{t_k})$. By Schwarz's theorem and \zcref{cal:Clairaut-lm}[(1) and (2)],
\begin{align}
\partial_t \nabla_{v_1} \partial_\mu \cV (t, \mu; Y^{(1)}_t) & = \nabla_{v_1} \partial_t \partial_\mu \cV (t, \mu; Y^{(1)}_t) , \\
\partial_\mu \partial_t \cV (t, \mu; Y^{(1)}_t) & = \partial_t \partial_\mu \cV (t, \mu; Y^{(1)}_t) , \\
\nabla_{v_1} \partial_\mu \partial_t \cV (t, \mu; Y^{(1)}_t) & = \nabla_{v_1} \partial_t \partial_\mu \cV (t, \mu; Y^{(1)}_t) , \\
b_k^{(1)} \otimes a_k^{(1)} \odot_3 \nabla_{v_1}^2 \partial_\mu \cV (t, \mu; Y^{(1)}_t) & = a_k^{(1)} \otimes b_k^{(1)} \odot_3 \nabla_{v_1}^2 \partial_\mu \cV (t, \mu; Y^{(1)}_t) , \\
a_k^{(1)} \otimes b_k^{(2)} \odot_3 \partial_\mu \nabla_{v_1} \partial_\mu \cV (t, \mu; Y^{[2]}_t) & = b_k^{(1)} \otimes a_k^{(2)} \odot_3 \nabla_{v_2} \partial_\mu^2 \cV (t, \mu; Y^{[2]}_t) .
\end{align}

Thus
\begin{equation} \label{main-thm:cnst2}
\varphi_{(1)} (t, \mu) = \begin{myaligned}[t]
& \partial_t^2 \cV (t, \mu) + \bE \Big [ 2 b (t, Z_1, \mu) \cdot \partial_t \partial_\mu \cV (t, \mu; Z_1) \\
& + a (t, Z_1, \mu) : \nabla_{v_1} \partial_t \partial_\mu \cV (t, \mu; Z_1) \\
& + b (t, Z_1, \mu) \otimes b (t, Z_2, \mu) : \partial_\mu^2 \cV (t, \mu; Z_1, Z_2) \\
& + a (t, Z_1, \mu)\otimes b (t, Z_1, \mu)  \odot_3 \nabla_{v_1}^2 \partial_\mu \cV (t, \mu; Z_1) \\
& + \frac{1}{4} a (t, Z_1, \mu) \otimes a (t, Z_1, \mu) \odot_4 \nabla_{v_1}^3 \partial_\mu \cV (t, \mu; Z_1) \\
& + b (t, Z_1, \mu) \otimes b (t, Z_1, \mu) : \nabla_{v_1} \partial_\mu \cV (t, \mu; Z_1) \\
& + a (t, Z_1, \mu) \otimes b (t, Z_2, \mu) \odot_3 \partial_\mu \nabla_{v_1} \partial_\mu \cV (t, \mu; Z_1, Z_2) \\
& + \frac{1}{4} a (t, Z_1, \mu) \otimes a (t, Z_2, \mu) \odot_4 \nabla_{v_2} \partial_\mu \nabla_{v_1} \partial_\mu \cV (t, \mu; Z_1, Z_2) \Big ].
\end{myaligned} 
\end{equation}

Next we are concerned with a sufficient condition to ensure $|\cU (\mu^{n}_T) - \cU (\mu_T)| \lesssim \frac{1}{n}$. We have $|\bar \tau^n_t - t| \le \frac{T}{n}$. By \zcref{main-thm:proof:repre:eq1:a} with $m=0$,
\[
\cU (\mu^{n}_T) - \cU (\mu_T) =  \int_{0}^{T} (\bar \tau^n_t - t) \rL_{\tau^n_t}^{2} \cV  (t, \mu^n_t) \diff t ,
\]
so it is enough that $\rL_{\tau^n_t}^{2} \cV$ is continuous and bounded uniformly in $n$. By \zcref{main-thm:proof:diff-oper}, for $\rL_{\tau^n_t}^{2} \cV = \rL_{\tau^n_t} \circ \rL_{\tau^n_t} \cV$ to be well-defined, it is enough that $(b, \sigma)$ are of class $\sM_b^1$ and $\rL_{\tau^n_t} \cV$ is of class $\sM_b^2$. By \zcref{main-thm:proof:diff-oper} again, for $\rL_{\tau^n_t} \cV$ to be well-defined, it is enough that $(b, \sigma)$ are of class $\sM_b^1$ and $\cV$ is of class $\sM_b^4$. By regularity of the master PDE in \zcref{cal:master-PDE}(2), it suffices that $(b, \sigma)$ are of class $\sM_b^4$.

Finally, we are concerned with a sufficient condition to ensure $\cU (\mu^{n}_T) - \cU (\mu_T) = \frac{C_1}{n} + \cO ( \frac{1}{n^{2}} )$. Such a condition is that \zcref{main-thm:proof:int6} holds with $m=1$. This is equivalent to \zcref{main-thm:proof:repre:eq1:a} holding with $m=1$, i.e.,
\[
\cU (\mu^{n}_T) - \cU (\mu_T) =  \begin{myaligned}[t]
& \int_{0}^{T} (\bar \tau^n_t - t) \rL_{\tau^n_t}^{2} \cV  (\tau^n_t, \mu^n_{\tau^n_t}) \diff t \\
& + \frac{1}{2} \int_{0}^{T} (\bar \tau^n_t - t)^{2} \rL_{\tau^n_t}^{3} \cV  (t, \mu^n_t) \diff t ,
\end{myaligned} 
\]
so it is enough that $\rL_{\tau^n_t}^{3} \cV$ is continuous and bounded uniformly in $n$. With the same reasoning as in the above paragraph, it suffices that $(b, \sigma)$ are of class $\sM_b^6$.
\end{remark}

\subsection{Proof of \zcref{main-thm:proof:int}}

We need an auxiliary result.

\begin{lemma} \label{prd-fnc}
Let $h_n \colon \bT \to \bR$ be bounded and periodic with period $\eps_n$. Let $\overline{h_n} \coloneq \frac{1}{\eps_n} \int_0^{\eps_n} h_n (t) \diff t$ be the mean of $h_n$. We define the operator $\rL$ applied on $h_n$ by $\rL h_n (t) \coloneq \int_0^t \{ \overline{h_n} - h_n (s) \} \diff s$. Then
\begin{enumerate}
\item $\rL h_n \colon  \bT \to \bR$ is periodic with period $\eps_n$ and absolutely continuous with weak derivative equal to $\overline{h_n} - h_n$. Moreover, $\rL h_n (0) = \rL h_n (\tau^n_t)=0$ and $\| \rL h_n \|_\infty \le \eps_n \| h_n \|_\infty$. 

\item If $f \colon \bT \to \bR$ is absolutely continuous with weak derivative $f'$, then it holds for $t \in \bT $ that
\[
\int_0^{\tau^n_t} h_n (s) f (s) \diff s = \overline{h_n} \int_0^{\tau^n_t} f (s) \diff s + \int_0^{\tau^n_t} \rL h_n (s) f'(s) \diff s .
\]

\item If $h_n (t) = \bar \tau^n_t - t$ for $t \in \bT$, then there exist constants $\{\alpha_i\}_{i \in \bN}$ with $\alpha_0 = \frac{1}{2}$ that do not depend on $n$ and such that
\[
\overline{\rL^i h_n} \coloneq \overline{\underbrace{\rL \circ \cdots \circ \rL}_{i \text{ times}} h_n} = \alpha_i \eps_n^{i+1} 
\qtextq{for} i \in \bN .
\]
\end{enumerate}
\end{lemma}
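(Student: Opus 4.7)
The plan is to handle the three assertions in sequence: Part (1) by direct computation from the definition, Part (2) by integration by parts using the boundary identities from Part (1), and Part (3) by a rescaling that reduces everything to a single intrinsic $1$-periodic function.

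For Part (1), I would observe that $\rL h_n$ is the integral of the bounded measurable function $\overline{h_n}-h_n$, hence absolutely continuous with that as weak derivative. Periodicity comes from
\[
\rL h_n(t+\eps_n)-\rL h_n(t)=\int_t^{t+\eps_n}(\overline{h_n}-h_n(s))\diff s=0,
\]
where the last equality uses periodicity of $h_n$ to translate the integral to $[0,\eps_n]$ and then the definition of $\overline{h_n}$. The vanishing of $\rL h_n$ at $0$ and at $\tau^n_t\in\eps_n\bN$ is then immediate, and the sup-norm bound $\eps_n\|h_n\|_\infty$ is obtained on $[0,\eps_n]$ from the two equivalent expressions
\[
\rL h_n(t)=\int_0^t(\overline{h_n}-h_n)(s)\diff s=-\int_t^{\eps_n}(\overline{h_n}-h_n)(s)\diff s,
\]
each bounded by $2\|h_n\|_\infty\min(t,\eps_n-t)\le \eps_n\|h_n\|_\infty$, and extended by periodicity. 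Part (2) is then the integration by parts formula applied to the absolutely continuous functions $\rL h_n$ and $f$, combined with the boundary identities from Part (1), using that $(\rL h_n)'=\overline{h_n}-h_n$ in the weak sense.

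For Part (3), I would introduce the intrinsic $1$-periodic function $h(u)\coloneq 1-u$ on $[0,1)$ and the operator $\rL_1$ defined exactly as in the statement but with period $1$. A direct check gives $h_n(\eps_n u)=\eps_n h(u)$ and hence $\overline{h_n}=\eps_n\overline{h}$. A change of variables $s=\eps_n v$ in the definition of $\rL$ then shows $\rL h_n(\eps_n u)=\eps_n^2\rL_1 h(u)$. A straightforward induction on $i$ extends this to $\rL^i h_n(\eps_n u)=\eps_n^{i+1}\rL_1^i h(u)$, using at each step that the same change of variables yields $\overline{\rL^i h_n}=\eps_n^{i+1}\overline{\rL_1^i h}$. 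Setting $\alpha_i\coloneq\overline{\rL_1^i h}$ (manifestly independent of $n$) gives the required identity, with $\alpha_0=\int_0^1(1-u)\diff u=1/2$. The only non-routine step is this scaling induction in Part (3); the main difficulty is the bookkeeping needed to ensure each application of $\rL$ contributes exactly one factor of $\eps_n$, which the change of variables $s=\eps_n v$ is designed precisely to make transparent.
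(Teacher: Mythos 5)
Your parts (1) and (2) follow the paper's own route: direct computation for periodicity and the boundary values, and integration by parts with the vanishing of $\rL h_n$ at $0$ and $\tau^n_t$ for the identity in (2). (Your two-sided bound $\rL h_n(t)=\int_0^t(\overline{h_n}-h_n)=-\int_t^{\eps_n}(\overline{h_n}-h_n)$ on a single period in fact substantiates the constant $\eps_n\|h_n\|_\infty$ more carefully than the paper's one-line remark.) Part (3), however, is a genuinely different and correct argument. The paper proceeds by an explicit induction showing that on $[0,\eps_n)$ one has $\rL^i h_n(t)=\sum_{j=0}^{i+1}C_{i,j}\,\eps_n^{i-j+1}t^j$, with a recursion for the coefficients $C_{i,j}$ and for $\alpha_i$. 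You instead exploit homogeneity: with the $1$-periodic sawtooth $h(u)=1-u$ on $[0,1)$ and the period-$1$ analogue $\rL_1$ of the operator, the change of variables $s=\eps_n v$ gives $h_n(\eps_n u)=\eps_n h(u)$, $\overline{h_n}=\eps_n\overline{h}$, and by induction $\rL^i h_n(\eps_n u)=\eps_n^{i+1}\rL_1^i h(u)$, hence $\overline{\rL^i h_n}=\eps_n^{i+1}\overline{\rL_1^i h}$, so $\alpha_i\coloneq\overline{\rL_1^i h}$ is manifestly $n$-independent and $\alpha_0=\tfrac12$. (For the induction you do need, and have via your Part (1) applied with period $1$, that $\rL_1$ preserves bounded $1$-periodic functions, so each iterate is again in the operator's domain.) Your scaling argument avoids all polynomial bookkeeping, makes the $n$-independence of the $\alpha_i$ structural rather than computational, and would work verbatim for any bounded periodic profile, not just the piecewise-linear one; the paper's explicit computation, in exchange, yields a concrete recursion for the $\alpha_i$ (and the $C_{i,j}$), which could be used to evaluate higher-order coefficients, although downstream only $\alpha_0=\tfrac12$ (i.e.\ $\beta_1=\tfrac12$) is actually used.
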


\begin{proof}
\begin{enumerate}
\item By periodicity of $h_n$,
\begin{align}
\rL h_n (t+\eps_n) & = \rL h_n (t) + \int_{t}^{t+\eps_n} \{ \overline{h_n} - h_n (s) \} \diff s \\
& = \rL h_n (t) + \int_{0}^{\eps_n} \{ \overline{h_n} - h_n (s) \} \diff s = \rL h_n (t).
\end{align}
    
The periodicity of $\rL h_n$ follows. Because $\frac{\tau^n_t}{\eps_n} \in \bN$, we get $\rL h_n (\tau^n_t)=0$ and $\rL h_n (t) = \int_{\tau^n_t}^t \{ \overline{h_n} - h_n (s) \} \diff s$. Notice that $|t-\tau^n_t| \le \eps_n$. So $\| \rL h_n \|_\infty \le \eps_n \| h_n \|_\infty$.
    
\item The second claim follows from the integration by parts formula.
    
\item Let $h_n (t) = \bar \tau^n_t - t$ for $t \in \bT$. We prove by induction on $i$ that $\rL^i h_n (t) = \sum_{j=0}^{i+1} C_{i, j} \eps_n^{i-j+1}t^j$ for $t \in [0, \eps_n)$ and that $\overline{\rL^i h_n}= \alpha_i \eps_n^{i+1}$. For $i=0$, we have $h_n (t) = -t + \eps_n$ for $t \in [0, \eps_n)$ and $\overline{h_n} = \frac{\eps_n}{2}$. Then $C_{0,0} = - C_{0,1}=1$ and $\alpha_0= \frac{1}{2}$. Assume that the claim holds for $i$. Then
\begin{align}
\rL^{i+1} h_n (t) & = \int_0^t \Big \{ \alpha_i \eps_n^{i+1} - \sum_{j=0}^{i+1} C_{i, j} \eps_n^{i-j+1} s^j \Big \} \diff s \\
& = \alpha_i \eps_n^{i+1} t - \sum_{j=0}^{i+1} \frac{C_{i, j} \eps_n^{i-j+1}}{j+1} t^{j+1} \qtextq{for} t \in [0, \eps_n), \\
\overline{\rL^{i+1} h_n} & = \frac{1}{\eps_n} \int_0^{\eps_n}  \rL^{i+1} h_n (t) \diff t = \frac{\alpha_i \eps_n^{i+2}}{2} - \sum_{j=0}^{i+1} \frac{C_{i, j} \eps_n^{i+2}}{(j+1)(j+2)} .
\end{align}
    
Thus the claim holds for $i+1$ with
\begin{align}
C_{i+1, j} & = 
\begin{cases}
0 & \textq{if} j=0, \\
\alpha_i - C_{i, 0} & \textq{if} j=1, \\
\frac{- C_{i, j-1}}{j} & \textq{if} j \in \llbracket 2, i+2 \rrbracket , \\
\end{cases} \\
\alpha_{i+1} & = \frac{\alpha_i}{2} - \sum_{j=0}^{i+1} \frac{C_{i, j}}{(j+1)(j+2)} .
\end{align}

This completes the proof.
\end{enumerate}
\end{proof}

\begin{proof}[Proof of \zcref{main-thm:proof:int}] \label{main-thm:proof:int:proof}
\textit{Step 1:} We verify that if $(K, i) \in \llbracket 1, n  \rrbracket \times \bN$ and $g \colon \bT \to \bR$ is absolutely continuous, then
\begin{equation} \label{main-thm:proof:int:eq2}
\int_0^{t_K} (\bar \tau^n_t - t)^i g (\tau^n_t) \diff t = \frac{\eps_n^i}{i+1} \Big \{ \int_0^{t_K} g (t) \diff t - \int_0^{t_K} (\bar \tau^n_t - t) g'(t) \diff t \Big \} .
\end{equation}

We denote by $J$ the left-hand side of \zcref{main-thm:proof:int:eq2}. Then
\begin{align}
J & = \sum_{k=0}^{K-1} \int_{\rI_k} (t_{k+1}-t)^i g (t_k) \diff t \\
& = \frac{\eps_n^i}{i+1} \sum_{k=0}^{K-1} g (t_k) \eps_n = \frac{\eps_n^i}{i+1} \int_0^{t_K} g (\tau^n_t) \diff t \\
& = \frac{\eps_n^i}{i+1} \Big \{ \int_0^{t_K} g (t) \diff t - \int_0^{t_K} \int_{\tau^n_t}^{t} g'(s) \diff s \diff t \Big \} \qtext{by the mean value theorem} \\
& = \frac{\eps_n^i}{i+1} \Big \{ \int_0^{t_K} g (t) \diff t - \int_0^{t_K} (\bar \tau^n_t - t) g'(t) \diff t \Big \} \qtext{by Fubini's theorem} .
\end{align}

\textit{Step 2:} We denote by $F$ the left-hand side of \zcref{main-thm:proof:int:eq0}, i.e.,
\[
F \coloneq (i+1) \int_0^{t_K} (\bar \tau^n_t - t)^i f (\tau^n_t) \diff t .
\]

Let $h_n (t) = \bar \tau^n_t - t$ for $t \in \bT$. By \zcref{main-thm:proof:int:eq2},
\[
\frac{F}{\eps_n^i} = \int_0^{t_K} f (t) \diff t - \int_0^{t_K} (\bar \tau^n_t - t) f'(t) \diff t.
\]

With \zcref{prd-fnc}(2), we deduce by induction that for $m\in{\mathbb N}^*$,
\begin{align}
\frac{F}{\eps_n^i} 
& = \begin{myaligned}[t]
& \int_0^{t_K} f (t) \diff t - \sum_{j=1}^{m-1} \overline{\rL^{j-1} h_n} \int_0^{t_K} \partial_t^{j} f (t) \diff t \\
& - \int_0^{t_K} \rL^{m-1} h_n (t) \partial_t^{m} f(t) \diff t .
\end{myaligned}
\end{align}

By \zcref{prd-fnc}(1), $\| \rL^{m-1} h_n \|_\infty = \cO (\eps_n^m)$. By \zcref{prd-fnc}(3), there exist constants $\{\alpha_j\}_{j \in \bN} \subset \bR$ with $\alpha_0 = \frac{1}{2}$ that do not depend of $n$ and such that $\overline{\rL^j h_n} = \alpha_j \eps_n^{j+1}$. The required representation follows. More precisely,
\[
\beta_j =
\begin{dcases}
1 & \qtextq{if} j = 0 , \\
\alpha_{j-1} & \qtextq{if} j \in  \llbracket 1, m-1 \rrbracket .
\end{dcases}
\]

In particular, $\beta_1 = \frac{1}{2}$. This completes the proof.
\end{proof}

\section{Proofs of auxiliary results in \zcref{calculus_on-P2}} \label{calculus_on-P2:proof}

\begin{proof}[Proof of \zcref{cal:Clairaut-lm}] \label{cal:Clairaut-lm:prf}
\begin{enumerate}
\setcounter{enumi}{1}
\item \cite[Lemma 5.1]{buckdahn_mean-field_2017} has the same conclusion. However, they assume the existence of $\partial_\mu \nabla_x U (x, \mu)$, which is not our case here. We will modify their argument, which in turn follows that of Schwarz's theorem. For $x,y \in \bR^r$ and $X, Y \in L^2 (\cF_0; \bR^d)$, we have
\begin{align}
I & \coloneq \{ U (x+y, \lbrbrak X+Y \rbrbrak) - U (x+y, \lbrbrak X \rbrbrak) \} - \{ U (x, \lbrbrak X+Y \rbrbrak) - U (x, \lbrbrak X \rbrbrak) \} \label{sym:eq0} \\
& = \int_0^1 \bE [ \{ \partial_\mu U (x+y, \lbrbrak X+tY \rbrbrak ; X+tY) - \partial_\mu U (x, \lbrbrak X+tY \rbrbrak ; X+tY) \} \cdot Y ] \diff t \label{sym:eq1} \\
& = \int_0^1 \int_0^1 \bE [ \nabla_x \partial_\mu U (x + sy, \lbrbrak X+tY \rbrbrak ; X+tY) y  \cdot Y ] \diff s \diff t \label{sym:eq2} \\
& = y \cdot \bE [ [ \nabla_x \partial_\mu U ]^\top (x, \lbrbrak X \rbrbrak ; X) Y ] + y \cdot R_1 (y, Y) , \label{sym:eq2:a}
\end{align}
where
\[
R_1 (y, Y) \coloneq \begin{myaligned}[t]
& \int_0^1 \int_0^1 \bE [ \{ \nabla_x \partial_\mu U (x + sy, \lbrbrak X+tY \rbrbrak ; X+tY) - \nabla_x \partial_\mu U (x, \lbrbrak X \rbrbrak ; X) \}^\top Y ] \diff s \diff t .
\end{myaligned}
\]

On the other hand,
\begin{align}
I & = \{ U (x+y, \lbrbrak X+Y \rbrbrak) - U (x, \lbrbrak X+Y \rbrbrak)  \} - \{ U (x+y, \lbrbrak X \rbrbrak) - U (x, \lbrbrak X \rbrbrak) \} \\
& = \int_0^1 \{ \nabla_x U (x+sy, \lbrbrak X+Y \rbrbrak) - \nabla_x U (x+sy, \lbrbrak X \rbrbrak) \} \cdot y \diff s \label{sym:eq3} \\
& = y \cdot \{ \nabla_x U (x, \lbrbrak X+Y \rbrbrak) - \nabla_x U (x, \lbrbrak X \rbrbrak) \} + y \cdot R_2 (y, Y) \label{sym:eq3:a} ,
\end{align}
where
\[
R_2 (y, Y) \coloneq \begin{myaligned}[t]
& \int_0^1 \{ \nabla_x U (x+sy, \lbrbrak X+Y \rbrbrak) - \nabla_x U (x, \lbrbrak X+Y \rbrbrak) \} \diff s \\
& - \int_0^1 \{ \nabla_x U (x+sy, \lbrbrak X \rbrbrak) - \nabla_x U (x, \lbrbrak X \rbrbrak) \} \diff s .
\end{myaligned}
\]

Above, \zcref{sym:eq1}, \zcref{sym:eq2} and \zcref{sym:eq3} are due to the mean value theorem. By \zcref{sym:eq2:a} and \zcref{sym:eq3:a}, when $Y \neq 0$,
\begin{equation} \label{sym:eq4}
y \cdot \Big \{ \begin{myaligned}[t]
& \frac{\nabla_x U (x, \lbrbrak X+Y \rbrbrak) - \nabla_x U (x, \lbrbrak X \rbrbrak) - \bE [ [ \nabla_x \partial_\mu U ]^\top (x, \lbrbrak X \rbrbrak ; X) Y ]}{\| Y \|_{L^2}} \\
& + \frac{R_2 (y, Y) - R_1 (y, Y)}{\| Y \|_{L^2}} \Big \} = 0.
\end{myaligned}
\end{equation}

Let $C$ be the Lipschitz constant of $ \nabla_x \partial_\mu U$ and $\nabla_x U$. Then $|R_2 (y, Y)| \le 2C |y|$. Substituting $y/m$ for $y$ in \zcref{sym:eq4} multiplied by $m$ and taking the limit $m \to \infty$, we obtain
\begin{equation} \label{sym:eq5}
\begin{multlined}[t]
y \cdot \Big \{ \frac{\nabla_x U (x, \lbrbrak X+Y \rbrbrak) - \nabla_x U (x, \lbrbrak X \rbrbrak) - \bE [ [ \nabla_x \partial_\mu U ]^\top (x, \lbrbrak X \rbrbrak ; X) Y ]}{\| Y \|_{L^2}} - \frac{R_1 (0, Y)}{\| Y \|_{L^2}} \Big \} = 0.
\end{multlined}
\end{equation}

Because \zcref{sym:eq5} holds for arbitrary $y \in \bR^r$, we get
\begin{equation} \label{sym:eq6}
\frac{\nabla_x U (x, \lbrbrak X+Y \rbrbrak) - \nabla_x U (x, \lbrbrak X \rbrbrak) - \bE [ [ \nabla_x \partial_\mu U ]^\top (x, \lbrbrak X \rbrbrak ; X) Y ]}{\| Y \|_{L^2}} = \frac{R_1 (0, Y)}{\| Y \|_{L^2}} .
\end{equation}

We have $|R_1 (0, Y)| \le 2 C \| Y \|_{L^2} \| Y \|_{L^2}$. Thus
\begin{equation}
\lim_{\| Y \|_{L^2} \to 0} \frac{\nabla_x U (x, \lbrbrak X+Y \rbrbrak) - \nabla_x U (x, \lbrbrak X \rbrbrak) - \bE [ [ \nabla_x \partial_\mu U ]^\top (x, \lbrbrak X \rbrbrak ; X) Y ]}{\| Y \|_{L^2}} = 0 .
\end{equation}

Hence $\mu \mapsto \nabla_x U (x, \mu)$ is L-differentiable with $\partial_\mu \nabla_x U (x, \mu; v) = [ \nabla_x \partial_\mu U ]^\top (x, \mu ; v)$.

\item \textit{Step 1: we will prove that $\partial_\mu U (\cdot, \mu; v)$ is differentiable and $\partial_t \partial_\mu U (t, \mu; v) = \partial_\mu \partial_t U (t, \mu; v)$.} This step is a slight modification of the above. Replacing $I$ in \zcref{sym:eq0} with
\[
I \coloneq \{ U (t+s, \lbrbrak X+Y \rbrbrak) - U (t, \lbrbrak X+Y \rbrbrak)  \} - \{ U (t+s, \lbrbrak X \rbrbrak) - U (t, \lbrbrak X \rbrbrak) \}
\]
for $t, s \in \bT$ and $X, Y \in L^2 (\cF_0; \bR^d)$. We check analogously to \zcref{sym:eq4} that
\begin{equation} \label{sym:eq7:b}
\bE \Big [ Y \cdot \Big \{ \begin{myaligned}[t]
& \frac{\partial_\mu U (t+s, \lbrbrak X \rbrbrak ; X) - \partial_\mu U (t, \lbrbrak X \rbrbrak ; X) - s \partial_\mu \partial_t U (t, \lbrbrak X \rbrbrak; X) }{s} \\
& + \frac{R_2 (s, X + rY, X) - R_1 (s, \lbrbrak X + r Y \rbrbrak, X + r Y, \lbrbrak X \rbrbrak, X)}{s} \Big \} \Big ] = 0 ,
\end{myaligned}
\end{equation}
where
\begin{align}
R_1 (s, \mu_1, Z_1, \mu_2, Z_2) & \coloneq \begin{myaligned}[t]
& s \int_0^1 \int_0^1 \{ \partial_\mu \partial_t U (t+hs, \mu_1; Z_1) - \partial_\mu \partial_t U (t, \mu_2; Z_2) \} \diff r \diff h , \\
\end{myaligned} \\
R_2 (s, Z_1, Z_2) & \coloneq \begin{myaligned}[t]
& \int_0^1 \{ \partial_\mu U (t+s, \lbrbrak Z_1 \rbrbrak ; Z_1) - \partial_\mu U (t+s, \lbrbrak Z_2 \rbrbrak ; Z_2) \} \diff r \\
& + \int_0^1 \{ \partial_\mu U (t, \lbrbrak Z_1 \rbrbrak ; Z_1) - \partial_\mu U (t, \lbrbrak Z_2 \rbrbrak ; Z_2) \} \diff r
\end{myaligned}
\end{align}
for $\mu_1, \mu_2 \in \sP_2 (\bR^d)$ and $Z_1, Z_2 \in L^2 (\cF_0; \bR^d)$.

Notice that $L^2 (\cF_0; \bR^d) \ni Y \mapsto R_1 (s, \lbrbrak X + r Y \rbrbrak, X + r Y, \lbrbrak X \rbrbrak, X)$ is continuous; and $|R_2 (s, X+rY, X)| \le 2 C \| Y \|_{L^2}$. Substituting $Y/m$ for $Y$ in \zcref{sym:eq7:b} and taking the limit $m \to \infty$, we get
\begin{equation} \label{sym:eq8:b}
\bE \Big [ Y \cdot \Big \{ \begin{myaligned}[t]
& \frac{\partial_\mu U (t+s, \lbrbrak X \rbrbrak ; X) - \partial_\mu U (t, \lbrbrak X \rbrbrak ; X) - s \partial_\mu \partial_t U (t, \lbrbrak X \rbrbrak; X) }{s} \\
& - \frac{R_1 (s, \lbrbrak X \rbrbrak, X, \lbrbrak X \rbrbrak, X)}{s} \Big \} \Big ] = 0.
\end{myaligned}
\end{equation}

Because \zcref{sym:eq8:b} holds for arbitrary $Y \in L^2 (\cF_0; \bR^d)$, we get
\begin{equation} \label{sym:eq9:b}
\frac{\partial_\mu U (t+s, \lbrbrak X \rbrbrak ; X) - \partial_\mu U (t, \lbrbrak X \rbrbrak ; X) - s \partial_\mu \partial_t U (t, \lbrbrak X \rbrbrak ; X) }{s} = \frac{R_1 (s, \lbrbrak X \rbrbrak, X, \lbrbrak X \rbrbrak, X)}{s} .
\end{equation}

Thus
\begin{equation} \label{sym:eq10:b}
\frac{\partial_\mu U (t+s, \lbrbrak X \rbrbrak ; v) - \partial_\mu U (t, \lbrbrak X \rbrbrak ; v) - s \partial_\mu \partial_t U (t, \lbrbrak X \rbrbrak ; v) }{s} = \frac{R_1 (s, \lbrbrak X \rbrbrak, v, \lbrbrak X \rbrbrak, v)}{s}
\end{equation}
for $\lbrbrak X \rbrbrak$-a.e. $v \in \bR^d$. Let $\eps >0$ and $Z$ be a standard $d$-dimensional normal random variable, which is independent of $X$. Substituting $X + \eps Z$ for $X$ in \zcref{sym:eq10:b}, we get 
\begin{equation} \label{sym:eq11:b}
\begin{myaligned}[t]
& \frac{\partial_\mu U (t+s, \lbrbrak X + \eps Z \rbrbrak ; v) - \partial_\mu U (t, \lbrbrak X + \eps Z \rbrbrak ; v) - s \partial_\mu \partial_t U (t, \lbrbrak X + \eps Z \rbrbrak ; v) }{s} \\
& = \frac{R_1 (s, \lbrbrak X + \eps Z \rbrbrak, v, \lbrbrak X + \eps Z \rbrbrak, v)}{s}
\end{myaligned}
\end{equation}
for $\lbrbrak X + \eps Z  \rbrbrak$-a.e. $v \in \bR^d$. By continuity of $\partial_\mu U, \partial_\mu \partial_t U$ and $v \mapsto R_1 (s, \lbrbrak X + \eps Z \rbrbrak, v, \lbrbrak X + \eps Z \rbrbrak, v)$, \zcref{sym:eq11:b} holds for every $v \in \bR^d$. Taking the limit $\eps \to 0$ in \zcref{sym:eq11:b}, we obtain
\begin{equation} \label{sym:eq12:b}
\frac{\partial_\mu U (t+s, \lbrbrak X \rbrbrak ; v) - \partial_\mu U (t, \lbrbrak X \rbrbrak ; v) - s \partial_\mu \partial_t U (t, \lbrbrak X \rbrbrak ; v) }{s} = \frac{R_1 (s, \lbrbrak X \rbrbrak, v, \lbrbrak X \rbrbrak, v)}{s}
\end{equation}
for every $v \in \bR^d$. By continuity of $\partial_\mu \partial_t U$, we get
\[
\lim_{s \downarrow 0} \frac{R_1 (s, \lbrbrak X \rbrbrak, v, \lbrbrak X \rbrbrak, v)}{s} = 0 .
\]

Thus
\begin{equation} \label{sym:eq13:b}
\lim_{s \downarrow 0} \frac{\partial_\mu U (t+s, \lbrbrak X \rbrbrak ; v) - \partial_\mu U (t, \lbrbrak X \rbrbrak ; v) - s \partial_\mu \partial_t U (t, \lbrbrak X \rbrbrak ; v) }{s} = 0 .
\end{equation}

It follows that $\partial_\mu U (\cdot, \lbrbrak X \rbrbrak ; v)$ is differentiable at $t$ with
\begin{equation} \label{sym:eq13:c}
\partial_t \partial_\mu U (t, \lbrbrak X \rbrbrak ; v) = \partial_\mu \partial_t U (t, \lbrbrak X \rbrbrak ; v) .
\end{equation}

\textit{Step 2: we will prove that $\nabla_v \partial_\mu U (\cdot, \mu; v)$ is differentiable and $\partial_t \nabla_v \partial_\mu U (t, \mu; v) = \nabla_v \partial_\mu \partial_t U (t, \mu; v)$.} Recall that $\partial_t U (t, \cdot)$ is of class $\sM^2_b$, so $\partial_\mu \partial_t U (t, \mu; \cdot)$ is differentiable. By \zcref{sym:eq13:c}, $\partial_t \partial_\mu U (t, \mu ; \cdot)$ is differentiable and
\begin{equation} \label{sym:eq14}
\nabla_v \partial_t \partial_\mu U (t, \mu ; v) = \nabla_v \partial_\mu \partial_t U (t, \mu ; v) .
\end{equation}

Because $\nabla_v \partial_\mu \partial_t U$ is continuous, so is $\nabla_v \partial_t \partial_\mu U$. Recall that $U$ is of class $\sM^2_b$, so $\partial_\mu U (t, \mu ; \cdot)$ is differentiable. By Schwarz's theorem, $\nabla_v \partial_\mu U (\cdot, \mu ; v)$ is differentiable and
\begin{equation} \label{sym:eq15}
\partial_t \nabla_v \partial_\mu U (t, \mu ; v) = \nabla_v \partial_t \partial_\mu U (t, \mu ; v) .
\end{equation}

By \zcref{sym:eq14,sym:eq15}, $\partial_t \nabla_v \partial_\mu U (t, \mu ; v) = \nabla_v \partial_\mu \partial_t U (t, \mu ; v)$. This completes the proof.
\end{enumerate}
\end{proof}

\begin{proof}[Proof of \zcref{cal:leibniz}]
By Leibniz integral rule,
\begin{itemize}
\item We can assume that $q=r=0$. Thus $U \colon \Omega \times \sP_2 (\bR^d) \to \bR$ and $h=m=0$.
\item It suffices to prove that $V \colon \sP_2 (\bR^d) \to \bR$ is $k$-times L-differentiable with $\partial_\mu^k V (\mu; v) = \bE [ \partial_\mu^k U(\cdot, \mu ; v)]$.
\end{itemize}

\textit{Step 1:} First, we prove the result for $k=1$. For brevity, we denote by $\tilde L^2 \coloneq L^2 (\tilde \Omega, \tilde \cF_0, \tilde \bP ; \bR^d)$ where $(\tilde \Omega, \tilde \cF_0, \tilde \bP)$ is a pointwise copy of $(\Omega, \cF_0, \bP)$. Consider
\begin{alignat}{3}
\tilde U & \colon & \Omega \times \tilde L^2 & \to \bR , & \quad (\omega, X) & \mapsto U (\omega, \lbrbrak X \rbrbrak) ; \\
U_{\omega} & \colon & \sP_2 (\bR^d) & \to \bR , & \mu & \mapsto U (\omega, \mu) ; \\
\tilde U_{\omega} & \colon & \tilde L^2 & \to \bR , & X & \mapsto U (\omega, \lbrbrak X \rbrbrak) .
\end{alignat}

In particular, $\tilde U_{\omega}$ is the lift of $U_{\omega}$. Clearly, $V$ and its lift $\tilde V$ are bounded Lipschitz. We have
\begin{align}
\| \nabla \tilde U_\omega (X) - \nabla \tilde U_\omega (Y) \|_{\tilde L^2} & = \| \partial_\mu U_\omega (\lbrbrak X \rbrbrak, X) - \partial_\mu U_\omega (\lbrbrak Y \rbrbrak, Y) \|_{\tilde L^2} \\
& \le C_\omega \| \sW_2 (\lbrbrak X \rbrbrak, \lbrbrak Y \rbrbrak) + |X-Y| \|_{\tilde L^2} \\
& \le 2 C_\omega \| X-Y \|_{\tilde L^2} .
\end{align}

Then $\nabla \tilde U_\omega$ is bounded Lipschitz. By an unnumbered corollary of \cite[Page 32]{doi:10.1137/1.9781611971309}, $\tilde U_\omega$ is strictly differentiable in the sense of \cite[Section 2.2]{doi:10.1137/1.9781611971309}. By \cite[Proposition 2.3.6]{doi:10.1137/1.9781611971309}, $\tilde U_\omega$ is regular in the sense of \cite[Definition 2.3.4]{doi:10.1137/1.9781611971309}. By \cite[Theorem 2.7.2 and Proposition 2.2.4]{doi:10.1137/1.9781611971309}, $\tilde V$ is strictly differentiable with $\nabla \tilde V (X) = \bE [ \nabla_X \tilde U (\cdot, X)]$. Then $V$ is L-differentiable with $\partial_\mu V (\mu; v) = \bE [\partial_\mu U (\cdot, \mu ; v)]$.

\textit{Step 2:} Next we proceed by induction. The base case $k=1$ has been proved above. Assume that the claim holds for $k$. We now prove it for $k+1$. Let $(v, u) \in (\bR^d)^k \times \bR^d$. By induction hypothesis, $\partial_\mu^k V (\mu; v) = \bE [\partial_\mu^k U (\cdot, \mu ; v)]$. Notice that $\mu \mapsto \partial_\mu^k U (\omega, \mu ; v)$ is L-differentiable with bounded Lipschitz L-derivative. By part (1), $\mu \mapsto \partial_\mu^k V (\mu; v)$ is L-differentiable with $\partial_\mu^{k+1} V (\mu; v, u) = \bE [\partial_\mu^{k+1} U (\cdot, \mu ; v, u)]$. This completes the proof.
\end{proof}

\begin{proof}[Proof of \zcref{cal:exp}]
\begin{enumerate}
\setcounter{enumi}{1}
\item Clearly, $V$ is bounded Lipschitz. First, we prove the result for $k=1$. We denote $L^2 \coloneq L^2 (\cF_0; \bR^d)$. We define $\varphi \colon L^2 \times L^2 \to \bR$ by $\varphi (Y, Z) \coloneq \bE [U(Y, \lbrbrak Z \rbrbrak)]$. By the differentiation rules for product Banach spaces (see e.g. \cite[Theorems 8.4 and 8.11]{jost_postmodern_2005}) and \zcref{cal:leibniz},
\[
\nabla \varphi (Y, Z) = \big ( \nabla_v U (Y, \lbrbrak Z \rbrbrak) , \tilde \bE [\partial_\mu U (\tilde Y, \lbrbrak Z \rbrbrak; Z)] \big ) .
\]

Above, $\tilde Y$ is a pointwise copy of $Y$ defined on a pointwise copy $(\tilde \Omega, \tilde \cA, \tilde \bP, \tilde \bE)$ of $(\Omega, \cA, \bP, \bE)$. Let $\tilde V \colon L^2 \to \bR$ be the lift of $V$. Then $\tilde V (Y) = \varphi (Y, Y)$ and thus
\[
\nabla \tilde V (Y) = \tilde \bE [\partial_\mu U (\tilde Y, \lbrbrak Y \rbrbrak; Y)] + \nabla_v U (Y, \lbrbrak Y \rbrbrak) .
\]

So
\begin{equation} \label{cal:exp:eq1}
\partial_\mu V (\mu; v) = \bE [\partial_\mu U (Y, \mu; v)] + \nabla_v U (v, \mu)
\qtextq{where} \lbrbrak Y \rbrbrak = \mu.
\end{equation}

Next we proceed by induction. The base case $k=1$ has been proved above. Assume that the claim holds for $k$. We now prove it for $k+1$. Let $u \in \bR^d, v=(v_1, \ldots,v_k) \in (\bR^d)^k$ and $w \coloneq (v_1, \ldots, v_k, u) \in (\bR^d)^{k+1}$. By induction hypothesis,
\begin{equation} \label{cal:exp:eq1:a}
\partial_\mu^k V (\mu; v) = \bE[\partial_\mu^k U(Y, \mu; v)] + \sum_{i=1}^k \partial_\mu^{k-i} \nabla_{v_i} \partial_\mu^{i-1} U(v_i, \mu; v_{-i}) .
\end{equation}

Taking the Lions derivative in \zcref{cal:exp:eq1:a}, we obtain
\begin{equation} \label{cal:exp:eq2}
\partial_\mu^{k+1} V (\mu; w) = \begin{myaligned}[t]
& \partial_\mu \{ \bE[\partial_\mu^k U(Y, \cdot ; v)] \} (\mu, u) + \sum_{i=1}^k \partial_\mu^{k-i+1} \nabla_{v_i} \partial_\mu^{i-1} U(v_i, \mu; v_{-i}, u) .
\end{myaligned} 
\end{equation}

Next we use the base case to treat the first term on the right-hand side of \zcref{cal:exp:eq2}. Notice that $(y, \mu) \mapsto \partial_\mu^k U(y, \mu; v)$ is of class $\sM_b^1$. By \zcref{cal:exp:eq1},
\begin{equation} \label{cal:exp:eq3}
\partial_\mu \{ \bE[\partial_\mu^k U(Y, \cdot; v)] \} (\mu, u) = \bE[\partial_\mu^{k+1} U(Y, \mu; w)] + \nabla_{u} \partial_\mu^k U(u, \mu; v).
\end{equation}

Notice that $w_i=v_i$ and $w_{-i} = (v_{-i}, u)$ for $i \in \llbracket 1, k \rrbracket$. By \zcref{cal:exp:eq2} and \zcref{cal:exp:eq3}, the identity \zcref{cal:exp:eq0} holds for $k+1$.

\item By Leibniz integral rule, we can assume that $q=0$. Thus $U \colon (\bR^d)^{r} \times \sP_2 (\bR^d) \to \bR$ and $V \colon \sP_2 (\bR^d) \to \bR$. We define a collection $\{F_0, \ldots, F_r\}$ of functions $F_i \colon (\bR^d)^{r-i} \times \sP_2 (\bR^d) \to \bR$ recursively by $F_0 \coloneq U$ and
\[
F_{i+1} (y_1, \ldots, y_{r-i-1}, \mu) \coloneq \bE [F_i (y_1, \ldots, y_{r-i-1}, Y, \mu)] ,
\]
for all $y_1, \ldots, y_{r-i-1} \in \bR^d$ and $\lbrbrak Y \rbrbrak = \mu \in \sP_2 (\bR^d)$. Notice that $V= F_r$ by Fubini's theorem.

By \zcref{cal:exp}(2), $F_1 (y, \cdot)$ is $k$-times L-differentiable and for $p \in \llbracket 1, k \rrbracket$,
\begin{equation} \label{cal:exp2:eq1}
\partial_\mu^{p} F_1 (y, \mu; v) = \bE[\partial_\mu^{p} U(y, Y, \mu; v)] + \sum_{i=1}^{p} \partial_\mu^{p-i} \nabla_{v_{i}} \partial_\mu^{i-1} U(y, v_{i}, \mu; v_{-i})
\end{equation}
for all $y \in (\bR^d)^{r-1}$ and $\lbrbrak Y \rbrbrak = \mu \in \sP_2 (\bR^d)$.

By the smoothness of $U$, Leibniz integral rule and \zcref{cal:exp2:eq1}, the required smoothness of $\partial_\mu^{p} F_1 (y, \mu; v)$ in $(y, v)$ follows. Thus $F_1$ is of class $\sM_b^k$. By induction, $F_i$ is of class $\sM_b^k$ for every $i \in \llbracket 1, r \rrbracket$.

\item WLOG, we assume $q=2$. Then $V(s, t) = U(s, t, \mu_t)$ for $(s, t) \in \Delta^2$. We fix $(m_1, m_2) \in \bN^q$ such that $2 (m_1 + m_2) \le k$. We fix $s \in \bT$ and define $\varphi \colon [0, s] \times \sP_2  (\bR^d) \to \bR$ by $\varphi (t, \mu) \coloneq \partial^{m_1}_{s}  U(s, t, \mu)$. Clearly, $\varphi$ is of class $\sM_b^{k-2m_1}$ and $\partial^{m_1}_{s} V(s, t) = \varphi (t, \mu_t)$. We define an operator $\rL$ acting on $F \colon [0, s] \times \sP_2 (\bR^d) \to \bR$ of class $\sM_b^2$ by
\begin{equation} \label{cal:exp2:eq3}
\rL F (t, \mu) \coloneq \begin{myaligned}[t]
& \bE \Big [ \partial_t F (t, \mu) + b(t, Y, \mu) \cdot \partial_\mu F (t, \mu; Y) \\
& + \frac{1}{2} a(t, Y, \mu) : \nabla_v \partial_\mu F (t, \mu; Y ) \Big ] 
\qtextq{where} \lbrbrak Y \rbrbrak = \mu .
\end{myaligned} 
\end{equation}

By \zcref{cal:exp}[(1) and (3)], $\rL \varphi$ is of class $\sM_b^{k-2m_1-2}$. By \zcref{cal:Ito-lemma}, we have for all $t \in [0, s)$ that
\[
\partial^{m_1}_{s} V (s, t+h) - \partial^{m_1}_{s} V (s, t) = \int_t^{t+h} \rL \varphi (u, \mu_u) \diff u .
\]

Then for all $t \in [0, s)$,
\[
\lim_{h \downarrow 0} \frac{\partial^{m_1}_{s} V (s, t+h) - \partial^{m_1}_{s} V (s, t)}{h} = \rL \varphi (t, \mu_t).
\]

Similarly, we have for all $t \in (0, s]$ that
\[
\lim_{h \downarrow 0} \frac{ \partial^{m_1}_{s} V (s, t) - \partial^{m_1}_{s} V (s, t-h)}{h} = \rL \varphi (t, \mu_t) .
\]

Thus $[0, s] \ni t \mapsto \partial^{m_1}_{s} V (s, t)$ is differentiable with $\partial_t \partial^{m_1}_{s} V (s, t) = \rL \varphi (t, \mu_t)$. Let $\rL^{m_2} \coloneq \rL \circ \cdots \circ \rL$ be the $m_2$-times composition of $\rL$. Repeating the above procedure, we obtain
\begin{itemize}
\item $\rL^{m_2} \varphi$ is of class $\sM_b^{k-2 (m_1+m_2)}$.

\item $[0, s] \ni t \mapsto \partial^{m_1}_{s} V (s, t)$ is $m_2$-times differentiable with $m_2$-th derivative $\partial_t^{m_2} \partial^{m_1}_{s} V (s, t) = \rL^{m_2} \varphi (t, \mu_t)$. This completes the proof.
\end{itemize}
\end{enumerate}
\end{proof}

\section{Proof of regularity of $\tilde{\cV}$ in \zcref{cal:master-PDE}} \label{cal:master-PDE:prf}
We need an auxiliary lemma:

\begin{lemma} \label{par-form:2}
Let $q, r, k \in \bN$ and $U \colon \Delta^q \times \bR^r \times \sP_2 (\bR^d) \to \bR$ be of class $\sM^k_b$. Let $m \in \bN$ such that $m \ge d$. Let $\pi \colon \bR^m \to \bR^d$ such that $\pi (x_1, \ldots, x_m) = (x_1, \ldots, x_d)$ for every $(x_1, \ldots, x_m) \in \bR^m$. We define $V \colon \Delta^q \times \bR^r \times \sP_2 (\bR^m) \to \bR$ by $V (s, x, \varrho) \coloneq U (s, x, \pi_\sharp \varrho)$. Recall that $\pi_\sharp \varrho$ denotes the push-forward measure of $\varrho$ through $\pi$. Then $V$ is of class $\sM^k_b$.
\end{lemma}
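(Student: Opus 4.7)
The plan is to exploit the identity $V(s,x,\varrho)=U(s,x,\pi_\sharp\varrho)$ at the level of lifts. For $Y\in L^2(\cF_0;\bR^m)$ with $\lbrbrak Y\rbrbrak=\varrho$, the $\bR^d$-valued random variable $\pi(Y)$ satisfies $\lbrbrak\pi(Y)\rbrbrak=\pi_\sharp\varrho$, so the lift of $V(s,x,\cdot)$ evaluated at $Y$ coincides with $\tilde U(s,x,\pi(Y))$, where $\tilde U$ denotes the lift of $U(s,x,\cdot)$. Let $\iota\colon\bR^d\to\bR^m$ be the adjoint of $\pi$, namely $\iota(w_1,\ldots,w_d)=(w_1,\ldots,w_d,0,\ldots,0)$. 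Since $\pi$ is bounded linear, the chain rule applied to $Y\mapsto\tilde U(s,x,\pi(Y))$ gives a Fr\'echet gradient equal to $\iota\,\partial_\mu U(s,x,\pi_\sharp\varrho;\pi(Y))$, hence
\begin{equation}
[\partial_\mu V]_j(s,x,\varrho;y)=\begin{cases}[\partial_\mu U]_j(s,x,\pi_\sharp\varrho;\pi(y))&\text{if }j\le d,\\ 0&\text{if }j>d,\end{cases}
\end{equation}
for every $y\in\bR^m$.

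Then I would proceed by induction on $p\in\llbracket 1,k\rrbracket$ to establish
\begin{equation}
[\partial_\mu^p V]_{(j_1,\ldots,j_p)}(s,x,\varrho;y_1,\ldots,y_p)=\begin{cases}[\partial_\mu^p U]_{(j_1,\ldots,j_p)}(s,x,\pi_\sharp\varrho;\pi(y_1),\ldots,\pi(y_p))&\text{if all }j_i\le d,\\ 0&\text{otherwise}.\end{cases}
\end{equation}
The inductive step applies the first-order formula to the map $\varrho\mapsto[\partial_\mu^p V]_{(j_1,\ldots,j_p)}(s,x,\varrho;y_1,\ldots,y_p)$, which either is identically zero (and so has vanishing L-derivative) or equals $[\partial_\mu^p U]_{(j_1,\ldots,j_p)}(s,x,\pi_\sharp\varrho;\pi(y_1),\ldots,\pi(y_p))$, at which point the first-order argument reintroduces the $\iota$-extension in the new L-direction. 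The partial derivatives in $s$ and the gradients in $x$ commute trivially with the composition since $\pi$ does not involve those variables. The inner gradients $\nabla_{v_i}^{\ell_i}$ are handled by the ordinary chain rule for the linear map $\pi$: on the nontrivial index set, $\nabla_{y_i}^{\ell_i}$ reduces to substituting $\pi(y_i)$ into $\nabla_{v_i}^{\ell_i}[\partial_\mu^p U]_{(j_1,\ldots,j_p)}$, pre-composed with $\iota^{\otimes\ell_i}$ to land in the right tensor space.

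Finally, the boundedness, Lipschitz and continuity requirements of \zcref{cal:space1} for $\rD^\alpha V$ all transfer from those of $\rD^\alpha U$, using $|\pi(y)-\pi(y')|\le|y-y'|$ and the contraction estimate $\sW_2(\pi_\sharp\varrho,\pi_\sharp\varrho')\le\sW_2(\varrho,\varrho')$, obtained by pushing any $\gamma\in\Gamma(\varrho,\varrho')$ forward through $\pi\otimes\pi$ to produce a coupling of $(\pi_\sharp\varrho,\pi_\sharp\varrho')$ whose cost is dominated by that of $\gamma$. Joint continuity and uniform continuity in $s$ pass through the composition in the same way. I expect the main obstacle to be purely notational: the bookkeeping of which components of the higher-order L-derivatives vanish, and making sure the induction correctly threads through the definition of $\partial_\mu^{p+1}$ in \zcref{cal:higher-ld-def}; the underlying analytic content is already contained in the first-order identity derived from the lift.
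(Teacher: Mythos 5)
Your proposal is correct and follows essentially the same route as the paper: the paper also works with the lift, identifies $L^2(\cF_0;\bR^m)$ with a product, applies the chain rule to the linear projection to get $\nabla \tilde V = (\nabla \tilde U\circ\pi, 0)$, and iterates to show that the only nonvanishing components of $\partial_\mu^p V$ are $[\partial_\mu^p U](\,\cdot\,,\pi_\sharp\varrho;\pi(v_1),\ldots,\pi(v_p))$ on indices $\le d$, from which the $\sM_b^k$ bounds transfer. The only cosmetic difference is that the paper first reduces to $q=r=0$ and writes out the case $(m,d)=(2,1)$, whereas you keep the general indices and spell out the $\sW_2$-contraction under $\pi_\sharp$ explicitly.
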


\begin{proof}
By Leibniz integral rule, we can assume that $q=r=0$. We will prove the result for $(m, d) = (2, 1)$. The proof for the general case is similar. Then $U \colon \sP_2 (\bR) \to \bR$ and $V \colon \sP_2 (\bR^2) \to \bR$ with $V (\varrho) = U (\pi_\sharp \varrho)$.

For brevity, we denote $L^2 \coloneq L^2 (\cF_0; \bR)$. Let $\tilde U \colon L^2 \to \bR$ be the lift of $U$. Clearly, $L^2 (\cF_0; \bR^2)$ is isomorphic to $L^2 \times L^2$. Let $\tilde V \colon L^2 \times L^2 \to \bR$ be the lift of $V$. Let $\Psi \colon L^2 \times L^2 \to L^2$ be the projection of $L^2 \times L^2$ onto its first component. Then $\Psi$ is linear continuous and $\tilde V = \tilde U \circ \Psi$. Let $X = (X_1, X_2) \in L^2 \times L^2$. Notice that $\tilde U$ is Fréchet differentiable at $X_1$. By chain rule (see e.g. \cite[Theorem 3.3]{amann_analysis_2008}), $\tilde V$ is Fréchet differentiable at $X$ with $\partial \tilde V (X) = \partial \tilde U (\Psi (X)) \circ \partial \Psi (X)$. Then for all $Y = (Y_1, Y_2) \in L^2 \times L^2$,
\[
\partial \tilde V (X) (Y) = \langle \nabla \tilde U (X_1) , Y_1 \rangle_{L^2} = \langle ( \nabla \tilde U (X_1), 0) , Y \rangle_{L^2 \times L^2} .
\]

Thus
\begin{equation} \label{cal:id-proj:eq0}
\nabla \tilde V (X) = ( \nabla \tilde U (X_1), 0) = (\partial_\mu U (\lbrbrak X_1 \rbrbrak ; X_1), 0) .
\end{equation}

Notice that $\partial_\mu V \colon \sP_2 (\bR^2) \times \bR^2 \to \bR^2$. By \zcref{cal:id-proj:eq0}, we have for all $v_1 \in \bR^2$ that
\begin{align}
[\partial_\mu V]_1 (\mu ; v_1) & = \partial_\mu U (\pi_\sharp \mu ; \pi (v_1) ) , \\
[\partial_\mu V]_2 (\mu ; v_1) & = 0 .
\end{align}

Let $p \in \llbracket 1, k  \rrbracket$ and $\mathbf{1} = (1, \ldots, 1) \in \{ 1, 2 \}^p$. Notice that $\partial_\mu^p V \colon \sP_2 (\bR^2) \times (\bR^2)^p \to (\bR^2)^{\otimes p}$. Repeating the above procedure, we have for all $v_1, \ldots, v_p \in \bR^2$,
\begin{align} \label{cal:id-proj:eq1}
[\partial_\mu^p V]_{\mathbf{1}} (\mu ; v_1, \ldots, v_p) & = \partial_\mu^p U (\pi_\sharp \mu ; \pi (v_1), \ldots, \pi (v_p)) , \\
[\partial_\mu^p V]_{i} (\mu ; v_1, \ldots, v_p) & = 0 \qtextq{for every} i \in \{ 1, 2 \}^p \setminus {\mathbf{1}} .
\end{align}

The required smoothness of $V$ follows from \zcref{cal:id-proj:eq1} and that of $U$. This completes the proof.
\end{proof}

Let us prove the regularity of $\tilde{\cV}$. We will adopt the variational approach as in \cite{buckdahn_mean-field_2017,chassagneux_probabilistic_2022}. Recall that $\tilde \cV (s_1, \ldots, s_{q}, r, \mu) = \tilde \cU (s_1, \ldots, s_{q}, \lbrbrak X_{s_q}^{r, \mu} \rbrbrak)$ for all $(s_1, \ldots, s_{q}, r) \in \Delta^{q+1}$ and $\mu \in \sP_2 (\bR^d)$. If $(m_1, \ldots, m_{q-1}) \in \bN^{q-1}$ such that $2(m_1 + \cdots + m_{q-1}) \le k$, then
\[
\partial^{m_{q-1}}_{s_{q-1}} \ldots \partial^{m_1}_{s_1} \tilde \cV (s_1, \ldots, s_{q-1}, s_{q}, r, \mu) = \partial^{m_{q-1}}_{s_{q-1}} \ldots \partial^{m_1}_{s_1} \tilde \cU (s_1, \ldots, s_{q-1}, s_{q}, \lbrbrak X_{s_q}^{r, \mu} \rbrbrak) .
\]

So the crux of the problem is the regularity of the map
\[
\Delta^2 \times \sP_2 (\bR^d) \ni ((s_q, r), \mu) \mapsto \partial^{m_{q-1}}_{s_{q-1}} \ldots \partial^{m_1}_{s_1} \tilde \cU (s_1, \ldots, s_{q-1}, s_{q}, \lbrbrak X_{s_q}^{r, \mu} \rbrbrak) .
\]

Therefore, we can assume $q=1$. Then $\cV (s, r, \mu) = \tilde \cU (s, \lbrbrak X_{s}^{r, \mu} \rbrbrak)$ for all $(s, r) \in \Delta^2$ and $\mu \in \sP_2 (\bR^d)$.

For $(x, \xi) \in \bR^d \times L^2 (\cF_r; \bR^d)$, let $(X_t^{r, \xi}, Y_t^{r, x, \lbrbrak \xi \rbrbrak})_{t \in  [r, T]}$ be the unique solution to the system
\begin{equation} \label{cal:master-PDE:prf:eq1}
\begin{dcases}
X_t^{r, \xi} & = \xi + \int_r^t b (s, X_s^{r, \xi}, \lbrbrak X_s^{r, \xi} \rbrbrak ) \diff s  + \int_r^t \sigma (s, X_s^{r, \xi}, \lbrbrak X_s^{r, \xi} \rbrbrak ) \diff B_s , \\
Y_t^{r, x, \lbrbrak \xi \rbrbrak} & = x + \int_r^t b (s, Y_s^{r, x, \lbrbrak \xi \rbrbrak}, \lbrbrak X_s^{r, \xi} \rbrbrak ) \diff s + \int_r^t \sigma (s, Y_s^{r, x, \lbrbrak \xi \rbrbrak}, \lbrbrak X_s^{r, \xi} \rbrbrak ) \diff B_s . 
\end{dcases}
\end{equation}

$\{ Y_t^{r, x, \lbrbrak \xi \rbrbrak} \}_{t \in  [r, T]}$ is called the \textit{decoupled} process because it does not depend on its own marginal distribution. By weak uniqueness of the first equation in \zcref{cal:master-PDE:prf:eq1}, $\lbrbrak X_t^{r, \xi} \rbrbrak$ depends on $\xi$ only through its distribution $\lbrbrak \xi \rbrbrak$. As a consequence, $Y_t^{r, x, \lbrbrak \xi \rbrbrak}$ depends on $\xi$ only through $\lbrbrak \xi \rbrbrak$. The idea is to express the derivatives of $\xi \mapsto X_t^{r, \xi}$ through those of $\xi \mapsto Y_t^{r, x, \lbrbrak \xi \rbrbrak}$ and of $x \mapsto Y_t^{r, x, \lbrbrak \xi \rbrbrak}$. With the same reasoning as in \cite[Lemma 13.8]{bonaccorsi2006brownian}, we obtain
\begin{equation} \label{cal:master-PDE:prf:eq4}
\sup_{t \in [r, T]} \big | X_t^{r, \xi} - Y_t^{r, x, \lbrbrak \xi \rbrbrak} |_{x=\xi} \big | = 0 \quad \bP \text{-a.s.},
\end{equation}
where
\[
\{ Y_t^{r, x, \lbrbrak \xi \rbrbrak} |_{x=\xi} \} (\omega) \coloneq Y_t^{r, \xi (\omega), \lbrbrak \xi \rbrbrak} (\omega)
\qtextq{for}
\omega \in \Omega.
\]

We introduce a sequence $(\Omega^{(n)}, \bF^{(n)}, \bP^{(n)}, \bE^{(n)})_{n \in \bN^*}$ of pointwise copies of $(\Omega, \bF, \bP, \bE)$. With any random variable $\eta$ defined on $(\Omega, \bF, \bP)$, we associate a pointwise copy $\eta^{(n)}$ on $(\Omega^{(n)}, \bF^{(n)}, \bP^{(n)})$. For $t \in \bR_+$, we denote by $B_t = \{ B_{t, \ell} \}_{1 \le \ell \le d}$ the components of the Brownian motion.

\begin{enumerate}
\item \textit{First-order derivative of $x \mapsto Y_t^{r, x, \lbrbrak \xi \rbrbrak}$.} By Kunita's theory of stochastic flows (see e.g. \cite[Theorem 3.3.2]{kunita_stochastic_2019} or \cite[Lemma 4.1]{buckdahn_mean-field_2017}), for each $(x, \xi) \in \bR^d \times L^2 (\cF_r; \bR^d)$, there exists an $\bR^d \otimes \bR^d$-valued process $\{ \nabla_x Y_t^{r, x, \lbrbrak \xi \rbrbrak} \}_{t \in [r, T]}$ such that
\[
\lim_{\bR^d \ni y \to 0} \bE \Big [ \sup_{t \in [r, T]} \Big | \frac{Y_t^{r, x+y, \lbrbrak \xi \rbrbrak} - Y_t^{r, x, \lbrbrak \xi \rbrbrak} - \nabla_{x} Y_t^{r, x, \lbrbrak \xi \rbrbrak} y }{|y|}  \Big |^2 \Big ] = 0 .
\]

We remind that the term $\nabla_{x} Y_t^{r, x, \lbrbrak \xi \rbrbrak} y$ denotes the multiplication of the matrix $\nabla_{x} Y_t^{r, x, \lbrbrak \xi \rbrbrak}$ and the column vector $y$. Moreover, $\nabla_x Y_t^{r, x, \lbrbrak \xi \rbrbrak} = \{ \partial_{x_j} Y_{t, i}^{r, x, \lbrbrak \xi \rbrbrak} \}_{1 \le i, j \le d}$ is the unique solution of the SDE
\begin{equation} \label{cal:master-PDE:prf:eq:-1}
\partial_{x_j} Y_{t, i}^{r, x, \lbrbrak \xi \rbrbrak} = \begin{myaligned}[t]
& \delta_{i, j} + \sum_{k=1}^d \int_r^t \diff_k b_{i} (s, Y_s^{r, x, \lbrbrak \xi \rbrbrak}, \lbrbrak X_s^{r, \xi} \rbrbrak) \partial_{x_j} Y_{s, k}^{r, x, \lbrbrak \xi \rbrbrak}  \diff s \\
& + \sum_{k, \ell=1}^d \int_r^t \diff_k \sigma_{i, \ell} (s, Y_s^{r, x, \lbrbrak \xi \rbrbrak}, \lbrbrak X_s^{r, \xi} \rbrbrak) \partial_{x_j} Y_{s, k}^{r, x, \lbrbrak \xi \rbrbrak} \diff B_{s, \ell} .
\end{myaligned}
\end{equation}
where $t \in [r, T]$ and $1 \le i, j \le d$.

Above, $\delta_{i, j}$ denotes the Kronecker symbol, i.e., $\delta_{i, j} = 1$ if $i=j$ and $0$ otherwise; $\diff_k b_{i} (s, y, \mu)$ (resp. $\diff_k \sigma_{i, \ell} (s, y, \mu)$) denotes the partial derivative of $b_i$ (resp. $\sigma_{i, \ell}$) w.r.t $y_k$. In comparison to \cite[Equation (4.1)]{buckdahn_mean-field_2017}, \zcref{cal:master-PDE:prf:eq:-1} additionally has time component in the coefficients $(b, \sigma)$ and their derivatives. By \cite[Inequality (4.2)]{buckdahn_mean-field_2017}, for each $p \in [2, \infty)$, there exists a constant $C_p \in \bR$ such that for all $r \in [0, T]; x, x' \in \bR^d$ and $\xi, \xi' \in L^2 (\cF_r; \bR^d)$:
\begin{align}
\bE \Big [ \sup_{t \in [r, T]} | \nabla_x Y_t^{r, x, \lbrbrak \xi \rbrbrak} |^p \Big ] & \le C_p, \\
\bE \Big [ \sup_{t \in [r, T]} | \nabla_x Y_t^{r, x, \lbrbrak \xi \rbrbrak} - \nabla_x Y_t^{r, x', \lbrbrak \xi' \rbrbrak} |^p \Big ] & \le C_p \{|x-x'|^p + \sW_2^p (\lbrbrak \xi \rbrbrak, \lbrbrak \xi' \rbrbrak)\} .
\end{align}

\item \textit{First-order derivative of $\xi \mapsto Y_t^{r, x, \lbrbrak \xi \rbrbrak}$.} This is accomplished in \cite[Section 4]{buckdahn_mean-field_2017} which we summarize in the following. Let $\cL (L^2 (\cF_r; \bR^d) ; L^2 (\cF_t; \bR^d))$ be the space of bounded linear operators from $L^2 (\cF_r; \bR^d)$ to $L^2 (\cF_t; \bR^d)$. By \cite[Theorem 4.1]{buckdahn_mean-field_2017},
\[
L^2 (\cF_r; \bR^d) \to L^2 (\cF_t; \bR^d) , \, \xi \mapsto Y_t^{r, x, \lbrbrak \xi \rbrbrak}
\]
is Fréchet differentiable with its Fréchet derivative of the form
\begin{equation} \label{cal:master-PDE:prf:eq3:a}
\begin{array}{lrcl}
& L^2 (\cF_r; \bR^d) & \longrightarrow & \cL (L^2 (\cF_r; \bR^d) ; L^2 (\cF_t; \bR^d)) \\
& \xi & \longmapsto & \{ \eta \mapsto \bE^{(1)} [ U^{r, x, \lbrbrak \xi \rbrbrak}_t (\xi^{(1)}) \eta^{(1)} ] \}
\end{array} ,
\end{equation}
for some $\bR^d \otimes \bR^d$-valued process $\{U^{r, x, \lbrbrak \xi \rbrbrak}_t (y) \}_{ t \in [r, T]}$ adapted to $\{ \cF_t \}_{ t \in [r, T]}$. We remind that the term $U^{r, x, \lbrbrak \xi \rbrbrak}_t (\xi^{(1)}) \eta^{(1)}$ denotes the multiplication of the matrix $U^{r, x, \lbrbrak \xi \rbrbrak}_t (\xi^{(1)})$ and the column vector $\eta^{(1)}$.

By \cite[Lemma 4.3]{buckdahn_mean-field_2017}, for each $p \in [2, \infty)$, there exists a constant $C_p \in \bR$ such that for all $r \in [0, T]; x, x', y, y' \in \bR^d$ and $\xi, \xi' \in L^2 (\cF_r; \bR^d)$:
\begin{align}
\bE \Big [ \sup_{t \in [r, T]} | U_t^{r, x, \lbrbrak \xi \rbrbrak} (y) |^p \Big ] & \le C_p, \\
\bE \Big [ \sup_{t \in [r, T]} | U_t^{r, x, \lbrbrak \xi \rbrbrak} (y) - U_t^{r, x', \lbrbrak \xi' \rbrbrak} (y') |^p \Big ] & \le C_p \{|x-x'|^p + |y-y'|^p + \sW_2^p (\lbrbrak \xi \rbrbrak, \lbrbrak \xi' \rbrbrak)\} .
\end{align}

By \cite[Theorem 4.1]{buckdahn_mean-field_2017}, $U_t^{r, x, \lbrbrak \xi \rbrbrak} (y) = \{ U_{t, i, j}^{r, x, \lbrbrak \xi \rbrbrak} (y) \}_{1 \le i, j \le d}$ is the unique solution to the SDE
\begin{equation} 
U_{t, i, j}^{r, x, \lbrbrak \xi \rbrbrak} (y) = \begin{myaligned}[t]
& \sum_{k, \ell=1}^d \int_r^t \diff_k \sigma_{i, \ell} (s, Y_s^{r, x, \lbrbrak \xi \rbrbrak}, \lbrbrak X_s^{r, \xi} \rbrbrak) U_{s, k, j}^{r, x, \lbrbrak \xi \rbrbrak} (y) \diff B_{s, \ell} \\
& + \sum_{k=1}^d \int_r^t \diff_k b_{i} (s, Y_s^{r, x, \lbrbrak \xi \rbrbrak}, \lbrbrak X_s^{r, \xi} \rbrbrak) U_{s, k, j}^{r, x, \lbrbrak \xi \rbrbrak} (y) \diff s \\
& + \sum_{k, \ell=1}^d \int_r^t \bE \Big [ (\partial_\mu \sigma_{i, \ell})_k (s, z, \lbrbrak X_s^{r, \xi} \rbrbrak; Y_s^{r, y, \lbrbrak \xi \rbrbrak}) \partial_{x_j} Y_{s, k}^{r, y, \lbrbrak \xi \rbrbrak} \\
& + (\partial_\mu \sigma_{i, \ell})_k (s, z, \lbrbrak X_s^{r, \xi} \rbrbrak; X_s^{r, \xi}) U_{s, k, j}^{r, \xi} (y) \Big ] \Big |_{z=Y_s^{r, x, \lbrbrak \xi \rbrbrak}} \diff B_{s, \ell} \\
& + \sum_{k=1}^d \int_r^t \bE \Big  [ (\partial_\mu b_{i})_k (s, z, \lbrbrak X_s^{r, \xi} \rbrbrak; Y_s^{r, y, \lbrbrak \xi \rbrbrak}) \partial_{x_j} Y_{s, k}^{r, y, \lbrbrak \xi \rbrbrak} \\
& + (\partial_\mu b_{i})_k (s, z, \lbrbrak X_s^{r, \xi} \rbrbrak; X_s^{r, \xi}) U_{s, k, j}^{r, \xi} (y) \Big ] \Big |_{z=Y_s^{r, x, \lbrbrak \xi \rbrbrak}} \diff s ,
\end{myaligned} \label{cal:master-PDE:prf:eq2}
\end{equation}
where $t \in [r, T]$ and $1 \le i, j \le d$; and the process $U_t^{r, \xi} (y) = \{ U_{t, i, j}^{r, \xi} (y) \}_{1 \le i, j \le d}$, which appears in the 4-th and last lines of \zcref{cal:master-PDE:prf:eq2}, is the unique solution to the SDE
\begin{equation} 
U_{t, i, j}^{r, \xi} (y) = \begin{myaligned}[t]
& \sum_{k, \ell=1}^d \int_r^t \diff_k \sigma_{i, \ell} (s, X_s^{r, \xi}, \lbrbrak X_s^{r, \xi} \rbrbrak) U_{s, k, j}^{r, \xi} (y) \diff B_{s, \ell} \\
& + \sum_{k=1}^d \int_r^t \diff_k b_{i} (s, X_s^{r, \xi}, \lbrbrak X_s^{r, \xi} \rbrbrak) U_{s, k, j}^{r, \xi} (y) \diff s \\
& + \sum_{k, \ell=1}^d \int_r^t \bE \Big [ (\partial_\mu \sigma_{i, \ell})_k (s, z, \lbrbrak X_s^{r, \xi} \rbrbrak; Y_s^{r, y, \lbrbrak \xi \rbrbrak}) \partial_{x_j} Y_{s, k}^{r, y, \lbrbrak \xi \rbrbrak} \\
& + (\partial_\mu \sigma_{i, \ell})_k (s, z, \lbrbrak X_s^{r, \xi} \rbrbrak; X_s^{r, \xi}) U_{s, k, j}^{r, \xi} (y) \Big ] \Big |_{z=X_s^{r, \xi}} \diff B_{s, \ell} \\
& + \sum_{k=1}^d \int_r^t \bE \Big  [ (\partial_\mu b_{i})_k (s, z, \lbrbrak X_s^{r, \xi} \rbrbrak; Y_s^{r, y, \lbrbrak \xi \rbrbrak}) \partial_{x_j} Y_{s, k}^{r, y, \lbrbrak \xi \rbrbrak} \\
& + (\partial_\mu b_{i})_k (s, z, \lbrbrak X_s^{r, \xi} \rbrbrak; X_s^{r, \xi}) U_{s, k, j}^{r, \xi} (y) \Big ] \Big |_{z=X_s^{r, \xi}} \diff s ,
\end{myaligned}
\end{equation}
where $t \in [r, T]$ and $1 \le i, j \le d$.

Notice that $U_{t, i, j}^{r, x, \lbrbrak \xi \rbrbrak} (y)$ depends on $\xi$ only through $\lbrbrak \xi \rbrbrak$. In comparison to \cite[Equation (4.48)]{buckdahn_mean-field_2017}, \zcref{cal:master-PDE:prf:eq2} additionally has time component in the coefficients $(b, \sigma)$ and their derivatives.

The derivative of $Y_t^{r, x, \lbrbrak \xi \rbrbrak}$ in measure is defined as
\[
\partial_\mu Y_t^{r, x, \lbrbrak \xi \rbrbrak} (y) \coloneq U^{r, x, \lbrbrak \xi \rbrbrak}_t (y)
\qtextq{for} t \in [r, T]; x, y \in \bR^d .
\]

For $p \in \bN$ with $p \ge 2$, the $p$-th derivative of $Y_t^{r, x, \lbrbrak \xi \rbrbrak}$ in measure is defined inductively as
\[
\partial_\mu^p Y_t^{r, x, \lbrbrak \xi \rbrbrak} (v_1, \ldots, v_p) \coloneq \partial_\mu \{ \partial_\mu^{p-1} Y_t^{r, x, \lbrbrak \xi \rbrbrak} (v_1, \ldots, v_{p-1}) \} (v_p)
\]
for all $v_1, \ldots, v_p \in \bR^d$ and $t \in [r, T]$, provided that they actually exist. For $d=1$, the readers can find the formula for $\partial_\mu^2 Y_t^{r, x, \lbrbrak \xi \rbrbrak} (v_1, v_2)$ in \cite[Equation (5.15)]{buckdahn_mean-field_2017}, which is one-page long.

\item \textit{First-order derivative of $\mu \mapsto \tilde \cV (t, r, \mu)$ where $(t, r) \in \Delta^2$}. We define the directional derivatives of $\xi \mapsto Y_t^{r, x, \lbrbrak \xi \rbrbrak}$ by
\begin{equation} \label{cal:master-PDE:prf:eq3}
\rD_\xi Y_t^{r, x, \lbrbrak \xi \rbrbrak} (\eta) \coloneq \lim_{\bR \ni h \to 0} \frac{Y_{t}^{r, x, \lbrbrak \xi + h \eta \rbrbrak} - Y_{t}^{r, x, \lbrbrak \xi \rbrbrak}}{h} 
\qtextq{for} \xi, \eta \in L^2 (\cF_r; \bR^d) .
\end{equation}

The limit in \zcref{cal:master-PDE:prf:eq3} is taken in $L^2 (\cF_t; \bR^d)$. We denote
\begin{align} \label{cal:master-PDE:prf:eq:-3}
\nabla_x X_t^{r, \xi, \lbrbrak \xi \rbrbrak} & \coloneq \nabla_x Y_t^{r, x, \lbrbrak \xi \rbrbrak} |_{x=\xi} , \\
\partial_\mu X_t^{r, \xi, \lbrbrak \xi \rbrbrak} (y) & \coloneq \partial_\mu Y_t^{r, x, \lbrbrak \xi \rbrbrak} (y) |_{x=\xi} .
\end{align}

By \zcref{cal:master-PDE:prf:eq4} and then formal chain rule, we have
\begin{align} 
\partial_h X_t^{r, \xi + h \eta} |_{h=0} & = \partial_h \{ Y_t^{r, x, \lbrbrak \xi + h \eta \rbrbrak} |_{x = \xi + h \eta} \} |_{h=0} \\
& = \{ \nabla_x Y_t^{r, x, \lbrbrak \xi \rbrbrak} |_{x=\xi} \} \eta + \rD_\xi Y_t^{r, x, \lbrbrak \xi \rbrbrak} (\eta) |_{x=\xi} \\
& = \nabla_x X_t^{r, \xi, \lbrbrak \xi \rbrbrak} \eta + \rD_\xi Y_t^{r, x, \lbrbrak \xi \rbrbrak} (\eta) |_{x=\xi} . \label{cal:master-PDE:prf:eq4:a}
\end{align}

By \zcref{cal:master-PDE:prf:eq3:a} and \zcref{cal:master-PDE:prf:eq4:a},
\[
\Phi^r_t \colon L^2 (\cF_r; \bR) \to L^2 (\cF_t; \bR) , \, \xi \mapsto X_t^{r, \xi}
\]
is Fréchet differentiable with Fréchet derivative of the form
\begin{equation} \label{cal:master-PDE:prf:eq5}
\begin{array}{lrcl}
& \rD \Phi^r_t  \colon L^2 (\cF_r; \bR) & \longrightarrow & \cL (L^2 (\cF_r; \bR) ; L^2 (\cF_t; \bR)) \\
& \xi & \longmapsto & \{ \eta \mapsto \nabla_x X_t^{r, \xi, \lbrbrak \xi \rbrbrak} \eta + \bE^{(1)} [ \partial_\mu X_t^{r, \xi, \lbrbrak \xi \rbrbrak} (\xi^{(1)}) \eta^{(1)} ] \}
\end{array} .
\end{equation}

Let $\hat \sV_{t, r} \colon L^2 (\cF_{r}; \bR^d) \to \bR$ be the lift of $\tilde \sV (t, r, \cdot)$. Let $\hat \sU_t \colon L^2 (\cF_{t}; \bR^d) \to \bR$ be the lift of $\tilde \sU (t, \cdot)$. This means $\hat \sV_{t, r} (\xi) = \tilde \sV (t, r, \lbrbrak \xi \rbrbrak)$ and $\hat \sU_t (\xi) = \tilde \sU (t, \lbrbrak \xi \rbrbrak)$. Then $\hat \sV_{t, r} (\xi) = \hat \sU_t (\Phi^r_t (\xi))$.

By the chain rule for Fréchet derivatives,
\[
\rD \hat \cV_{t, r} (\xi) = \rD \hat \sU_t (X_t^{r, \xi}) \circ \rD \Phi^r_t (\xi) .
\]

By \zcref{cal:master-PDE:prf:eq5}, we have for all $\xi, \eta \in L^2 (\cF_{r}; \bR^d)$:
\begin{align}
\rD \hat \cV_{t, r} (\xi) (\eta) & = \bE [ \partial_\mu \tilde \sU (t, \lbrbrak X_{t}^{r, \xi} \rbrbrak ; X_{t}^{r, \xi} ) \cdot \{ \nabla_x X_t^{r, \xi, \lbrbrak \xi \rbrbrak} \eta + \bE^{(1)} [ \partial_\mu X_t^{r, \xi, \lbrbrak \xi \rbrbrak} (\xi^{(1)}) \eta^{(1)} ] \} ] \\
& = \begin{myaligned}[t] 
& \bE [ \eta \cdot [ \{ \nabla_x X_t^{r, \xi, \lbrbrak \xi \rbrbrak} \}^\top \partial_\mu \tilde \sU (t, \lbrbrak X_{t}^{r, \xi} \rbrbrak ; X_{t}^{r, \xi} ) ]] \\
& + \bE \bE^{(1)} [ \eta^{(1)} \cdot [ \{ \partial_\mu X_t^{r, \xi, \lbrbrak \xi \rbrbrak} (\xi^{(1)}) \}^\top \partial_\mu \tilde \sU (t, \lbrbrak X_{t}^{r, \xi} \rbrbrak ; X_{t}^{r, \xi} ) ] ] .
\end{myaligned}
\end{align}

By \zcref{cal:master-PDE:prf:eq4}, \zcref{cal:master-PDE:prf:eq:-3} and the freezing lemma,
\begin{myalign}
&  \bE [ \eta \cdot [ \{ \nabla_x X_t^{r, \xi, \lbrbrak \xi \rbrbrak} \}^\top \partial_\mu \tilde \sU (t, \lbrbrak X_{t}^{r, \xi} \rbrbrak ; X_{t}^{r, \xi} ) ]  ] \\
& = \bE [ \eta \cdot \bE^{(1)} [ \{ \nabla_x (Y^{(1)})_t^{r, x, \lbrbrak \xi \rbrbrak} \}^\top \partial_\mu \tilde \sU (t, \lbrbrak X_{t}^{r, \xi} \rbrbrak ; (Y^{(1)})_{t}^{r, x, \lbrbrak \xi \rbrbrak} ) ] |_{x = \xi} ] .
\end{myalign}

By Fubini's theorem,
\[
\begin{myaligned}[t]
& \bE \bE^{(1)} [ \eta^{(1)} \cdot [ \{ \partial_\mu X_t^{r, \xi, \lbrbrak \xi \rbrbrak} (\xi^{(1)}) \}^\top \partial_\mu \tilde \sU (t, \lbrbrak X_{t}^{r, \xi} \rbrbrak ; X_{t}^{r, \xi} ) ] ] \\
& = \bE [ \eta \cdot \bE^{(1)} [ \{ \partial_\mu (X^{(1)})_{t}^{r, \xi^{(1)}, \lbrbrak \xi \rbrbrak} ( \xi ) \}^\top \partial_\mu \tilde \sU (t, \lbrbrak X_{t}^{r, \xi} \rbrbrak ; (X^{(1)})_{t}^{r, \xi^{(1)}} )  ] ]  \\
& = \bE [ \eta \cdot \bE^{(1)} [ \{ \partial_\mu (X^{(1)})_{t}^{r, \xi^{(1)}, \lbrbrak \xi \rbrbrak} ( x ) \}^\top \partial_\mu \tilde \sU (t, \lbrbrak X_{t}^{r, \xi} \rbrbrak ; (X^{(1)})_{t}^{r, \xi^{(1)}} )  ] |_{x = \xi} ]  .
\end{myaligned}
\]

Hence we have for all $x \in \bR^d$,
\begin{equation} \label{cal:master-PDE:prf:value-function}
\partial_\mu \tilde \sV (t, r, \lbrbrak \xi \rbrbrak; x ) = \begin{myaligned}[t] 
& \bE^{(1)} [ \{ \nabla_x (Y^{(1)})_t^{r, x, \lbrbrak \xi \rbrbrak} \}^\top \partial_\mu \tilde \sU (t, \lbrbrak X_{t}^{r, \xi} \rbrbrak ; (Y^{(1)})_{t}^{r, x, \lbrbrak \xi \rbrbrak} ) \\
& + \{ \partial_\mu (X^{(1)})_{t}^{r, \xi^{(1)}, \lbrbrak \xi \rbrbrak} ( x ) \}^\top \partial_\mu \tilde \sU (t, \lbrbrak X_{t}^{r, \xi} \rbrbrak ; (X^{(1)})_{t}^{r, \xi^{(1)}} )  ] .
\end{myaligned}
\end{equation}
\end{enumerate}

\begin{remark}
The readers probably recognize that higher-order Lions differentiation will unavoidably lead to complicated expressions. Therefore, in the rest of the proof, we assume that at least one of the following two simplifications holds:
\begin{itemize}
\item In the first setting called \MakeLinkTarget*{cal:master-PDE:prf:setting:A}\textbf{Setting A}, important in view of applications, $d=1, b(t, x, \mu) = b(t, x)$ and $\sigma (t, x, \mu) = \sigma (t, x)$.
\item In the second setting called \MakeLinkTarget*{cal:master-PDE:prf:setting:B}\textbf{Setting B}, as in \cite[Theorem 2.18]{chassagneux_weak_2022}, $d=1, b=0$ and $\sigma (t, x, \mu) = \sigma (t, \mu)$.
\end{itemize}
\end{remark}

\begin{theorem} \label{cal:master-PDE:prf:long-thm}
Let $(b, \sigma, \tilde \cU)$ be of class $\sM^k_b$ and $0 \le r \le t \le T$. Let either of \hyperlink{cal:master-PDE:prf:setting:A}{\textbf{Setting A}} or \hyperlink{cal:master-PDE:prf:setting:B}{\textbf{Setting B}} hold. Then $\sP_2 (\bR) \times \bR^p \ni (\mu, v) \mapsto \tilde \sV (t, r, \mu; v)$ is of class $\sM^k_b$. Moreover, if $p \in \llbracket 1, k \rrbracket$ and $\ell = (\ell_1, \ldots, \ell_p) \in \bN^p$ such that $|\ell|_1 \coloneq \ell_1 + \cdots + \ell_p \le k-p$, there exist functions $(F_{(1, p, \ell)}, F_{(2, p, l)}, H_{(p, \ell)})$ with the following properties:
\begin{itemize}
\item $(F_{(1, p, \ell)}, F_{(2, p, l)}, H_{(p, \ell)})$ depend only on $(b, \sigma, \tilde \cU, p, \ell)$ and are of class $\sM^{k-p-|\ell|_1}_b$.
\item For all $(\xi, v) \in L^2 (\cF_r; \bR) \times \bR^p$,
\[
\rD^{(p, 0, 0, \ell)} \tilde \sV (t, r, \lbrbrak \xi \rbrbrak; v ) = H_{(p, \ell)} (t, \lbrbrak \mathbf{Y}^{r}_t (\xi; v)  \rbrbrak ) ,
\]
where
$\{ \mathbf{Y}^{r}_t (\xi; v) \}_{t \in [r, T]}$ is the unique solution to the multi-dimensional SDE
\begin{equation}
\mathbf{Y}^{r}_t (\xi; v) = \begin{myaligned}[t]
& \mathbf{Y}^{r}_r (\xi; v) + \int_r^t F_{(1, p, \ell)} (s, \mathbf{Y}^{r}_s (\xi; v), \lbrbrak \mathbf{Y}^{r}_s (\xi; v) \rbrbrak) \diff s \\
& + \int_r^t F_{(2, p, \ell)} (s, \mathbf{Y}^{r}_s (\xi; v), \lbrbrak \mathbf{Y}^{r}_s (\xi; v) \rbrbrak ) \diff \begin{bmatrix}
B^{(1)}_s \\
\vdots \\
B^{(p)}_s
\end{bmatrix} ,
\end{myaligned}
\end{equation}
and the map $(\xi, v) \mapsto \mathbf{Y}^{r}_r (\xi; v)$ is affine continuous and does not depend on $r$.
\end{itemize}
\end{theorem}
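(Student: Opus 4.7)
The strategy is induction on $p+|\ell|_1$, the size of the multi-index, with the inductive hypothesis jointly asserting the representation and the $\sM^{k-p-|\ell|_1}_b$-regularity of the coefficients $(F_{(1,p,\ell)},F_{(2,p,\ell)},H_{(p,\ell)})$. The representation is designed so that each additional Lions or spatial derivative can be absorbed by enlarging the state vector $\mathbf{Y}^r$ with one more copy of the system together with its derivative processes, and by updating $H_{(p,\ell)}$ according to the rule given by \zcref{cal:exp}(2). This keeps the form of the SDE invariant along the induction.

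\textbf{Base case $p=1$, $\ell=0$.} The formula \zcref{cal:master-PDE:prf:value-function} is the starting point. In \hyperlink{cal:master-PDE:prf:setting:A}{\textbf{Setting A}}, $b$ and $\sigma$ do not depend on $\mu$, so $\partial_\mu X^{r,\xi,\lbrbrak\xi\rbrbrak}_t(x)=0$, and only the first term survives. I introduce the augmented process $\mathbf{Y}^r_t(\xi;v)=(X^{r,\xi}_t,Y^{r,v}_t,\nabla_x Y^{r,v}_t)$ driven by one Brownian motion, with affine initial condition $(\xi,v,1)$; by \zcref{cal:master-PDE:prf:eq:-1}, the triple solves a closed SDE whose coefficients are polynomials in $b,\sigma$ and their first derivatives, hence of class $\sM^{k-1}_b$. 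I then set $H_{(1,0)}(t,\pi)\coloneqq\int (g^\top\partial_\mu\tilde\sU(t,\pi^1;y))\diff\pi(x,y,g)$ where $\pi^1$ is the first marginal, which is of class $\sM^{k-1}_b$ by \zcref{cal:exp}(1)-(3). In \hyperlink{cal:master-PDE:prf:setting:B}{\textbf{Setting B}}, both terms of \zcref{cal:master-PDE:prf:value-function} must be kept; I append $\partial_\mu X$ (which satisfies the linear SDE inherited from \zcref{cal:master-PDE:prf:eq2} with $b=0$ and $\sigma$ depending only on $(t,\mu)$) to the state and combine both contributions into a single $H_{(1,0)}$.

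\textbf{Inductive step.} Assuming the formula holds at order $(p,\ell)$, I obtain order $(p+1,\ell)$ (or $(p,\ell+e_i)$) by applying, respectively, a Lions or a spatial derivative. The differentiation commutes with the expectation via \zcref{cal:leibniz} and with time derivation of the processes via \zcref{cal:Clairaut-lm}. Two things happen: the outer function $H_{(p,\ell)}$ is differentiated, producing an additional variable $v_{p+1}$; and an independent copy of the full system is introduced to carry the new Lions-derivation variable, so the driving Brownian motion becomes $(B^{(1)},\dots,B^{(p+1)})$. The derivative processes $\nabla_x Y$, $\partial_\mu Y$, $\partial_\mu X$ all satisfy linear SDEs whose coefficients involve only derivatives of $b,\sigma$ up to order $p+|\ell|_1+1$; stacking them onto $\mathbf{Y}^r$ yields the enlarged system with coefficients $(F_{(1,p+1,\ell)},F_{(2,p+1,\ell)})$ of class $\sM^{k-(p+1)-|\ell|_1}_b$. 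The updated $H_{(p+1,\ell)}$ is built from $H_{(p,\ell)}$, its partial derivatives, and products with $\partial_\mu\tilde\sU$ via the formula of \zcref{cal:exp}(2); its regularity is read off from \zcref{cal:exp}(1)-(3) and \zcref{par-form:2} (the latter to turn joint laws into marginal laws when necessary). The initial conditions of the appended components are constants (identities or zeros), so affinity and independence of $r$ are preserved. Finally, \zcref{cal:leibniz} applied to the expectation and the $L^p$-bounds recalled in items (1)-(2) of the discussion preceding the theorem give the uniform $\sM^k_b$-bounds required in \zcref{cal:space1}.

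\textbf{Main obstacle.} The bookkeeping is delicate in \hyperlink{cal:master-PDE:prf:setting:B}{\textbf{Setting B}}: because $\sigma$ depends on the current law, a Lions derivative of order $p$ produces derivative processes that are coupled simultaneously to $X^{r,\xi}_t$ and to the independent copies $(X^{(j)})^{r,\xi^{(j)}}_t$ appearing through the Lions derivations, through the freezing lemma structure in \zcref{cal:master-PDE:prf:eq2}. One must carefully verify that, after enlarging the state vector, the resulting system still closes into a single multi-dimensional McKean-Vlasov SDE whose drift and diffusion matrices retain class $\sM^{k-p-|\ell|_1}_b$. This is where the restriction $d=1$ and the simple structure of $\sigma$ (either $x$-independent or $\mu$-independent) is essential: it prevents combinatorial explosion of mixed derivatives $\diff^\alpha_x\partial_\mu^\beta(\sigma\sigma^\top)$ that would otherwise fail to fit into the scalar template of $(F_{(1,p,\ell)},F_{(2,p,\ell)})$. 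Once the combinatorics is handled in these two cases, the continuity of time-derivatives follows by the same Itô-based argument as in the proof of \zcref{cal:time-diff2}, completing the verification that $\tilde\sV$ is of class $\sM^k_b$ in all its variables.
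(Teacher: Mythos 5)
Your proposal takes essentially the same route as the paper: both represent each mixed derivative $\rD^{(p,0,0,\ell)} \tilde \sV$ as a smooth functional $H_{(p,\ell)}$ of the law of a stacked SDE built from the decoupled flow and its derivative processes ($\nabla_x Y$ in \hyperlink{cal:master-PDE:prf:setting:A}{\textbf{Setting A}}, $\partial_\mu^i X$ in \hyperlink{cal:master-PDE:prf:setting:B}{\textbf{Setting B}}), enlarging the state vector with each differentiation and reading off the $\sM^{k-p-|\ell|_1}_b$ regularity from \zcref{cal:exp}, \zcref{par-form:2} and \zcref{cal:leibniz}. The only difference is organizational: you interleave measure and space derivatives in one induction on $p+|\ell|_1$, whereas the paper first iterates the measure derivative via the explicit recursions imported from the literature and then handles the $v$-derivatives by differentiating the stacked SDE, which does not change the substance.
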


In the below proof of \zcref{cal:master-PDE:prf:long-thm}, the formula of $\mathbf{Y}^{r}_r (\xi; v)$ is explicit and we repeatedly use the following strategy. We will express the quantity of interest, e.g., $\partial_\mu^p \tilde \sV (t, r, \lbrbrak \xi \rbrbrak; v )$ in \zcref{cal:master-PDE:prf:eq:-8:a,cal:master-PDE:prf:eq6:a:b}; and $\rD^{(p, 0, 0, \ell)} \tilde \sV (t, r, \lbrbrak \xi \rbrbrak; v )$ in \zcref{cal:master-PDE:prf:system:e:a,cal:master-PDE:prf:system:e:b}, as a function of the marginal distribution of a (high-dimensional) SDE. This function as well as the coefficients of the SDE have a specific degree of smoothness due to their explicit constructions. For the notations of mixed derivatives, we refer the readers to \zcref{multi-index}.

\begin{proof}
First, we consider \hyperlink{cal:master-PDE:prf:setting:A}{\textbf{Setting A}} where $d=1, b(t, x, \mu) = b(t, x)$ and $\sigma (t, x, \mu) = \sigma (t, x)$. In this setting, we only start assuming $d=1$ at the end of part (1A).

\begin{itemize}
\item Notice that $( Y_t^{r, x, \lbrbrak \xi \rbrbrak} , \nabla_x Y_t^{r, x, \lbrbrak \xi \rbrbrak} )_{t \in  [r, T]}$ does not depend on $\xi$. Thus we write $Y_t^{r, x} \coloneq Y_t^{r, x, \lbrbrak \xi \rbrbrak}$ and $\nabla_x Y_t^{r, x} \coloneq \nabla_x Y_t^{r, x, \lbrbrak \xi \rbrbrak}$.

\item Also, $\partial_\mu X_t^{r, \xi, \lbrbrak \xi \rbrbrak} (y) = \partial_\mu Y_t^{r, x, \lbrbrak \xi \rbrbrak} (y) = 0$ for all $(t, x, \xi) \in [r, T] \times \bR^d \times L^2 (\cF_r; \bR^d)$.
\end{itemize}

First, \zcref{cal:master-PDE:prf:eq1} becomes
\begin{equation} 
\begin{dcases}
X_t^{r, \xi} & = \xi + \int_r^t b (s, X_s^{r, \xi} ) \diff s + \int_r^t \sigma (s, X_s^{r, \xi} ) \diff B_s , \\ 
Y_t^{r, x} & = x + \int_r^t b (s, Y_s^{r, x} ) \diff s + \int_r^t \sigma (s, Y_s^{r, x} ) \diff B_s . \label{cal:master-PDE:prf:eq1:a}
\end{dcases}
\end{equation}

Second, \zcref{cal:master-PDE:prf:eq:-1} becomes
\begin{equation} \label{cal:master-PDE:prf:eq:-1:a}
\partial_{x_j} Y_{t, i}^{r, x} = \begin{myaligned}[t]
& \delta_{i, j} + \sum_{k=1}^d \int_r^t \diff_k b_{i} (s, Y_s^{r, x}) \partial_{x_j} Y_{s, k}^{r, x}  \diff s + \sum_{k, \ell=1}^d \int_r^t \diff_k \sigma_{i, \ell} (s, Y_s^{r, x}) \partial_{x_j} Y_{s, k}^{r, x} \diff B_{s, \ell} .
\end{myaligned}
\end{equation}

It follows from \zcref{cal:master-PDE:prf:eq1:a,cal:master-PDE:prf:eq:-1:a} that
\begin{equation} \label{cal:master-PDE:prf:eq:-2:a}
\mathbf{U}^{r}_t (\xi; x) \coloneq \Big ( X^{r, \xi}_t,  Y^{r, x}_{t}, \{ \partial_{x_j} Y_{t, i}^{r, x} \}_{1 \le i, j \le d} \Big ) ,
\end{equation}
where $(\xi, x) \in L^2 (\cF_r; \bR^d) \times \bR^d$, can be written as the unique solution of the SDE
\begin{equation} \label{cal:master-PDE:prf:eq:-4:a}
\mathbf{U}^{r}_t (\xi; x) = \mathbf{U}^{r}_r (\xi; x) + \int_r^t F_1 (s, \mathbf{U}^{r}_s (\xi; x)  ) \diff s + \int_r^t F_2 (s, \mathbf{U}^{r}_s (\xi; x)  ) \diff B_s ,
\end{equation}
for some functions $(F_1, F_2)$ of class $\sM^{k-1}_b$.

\begin{enumerate}[label=(\theenumi A)]
\item \textit{Higher-order derivatives of $\mu \mapsto \tilde \cV (t, r, \mu)$ where $(t, r) \in \Delta^2$.} Notice that \zcref{cal:master-PDE:prf:value-function} becomes
\begin{equation} \label{cal:master-PDE:prf:value-function:a}
\partial_\mu \tilde \sV (t, r, \lbrbrak \xi \rbrbrak; x ) = \bE^{(1)} [ \{ \nabla_x (Y^{(1)})_t^{r, x} \}^\top \partial_\mu \tilde \sU (t, \lbrbrak X_{t}^{r, \xi} \rbrbrak ; (Y^{(1)})_{t}^{r, x} ) ] .
\end{equation}

Hence we have for all $(i_1, v_1) \in \llbracket 1, d \rrbracket \times \bR^d$ that
\begin{equation}
[\partial_\mu \tilde \sV]_{i_1} (t, r, \lbrbrak \xi \rbrbrak; v_1 ) = \sum_{j_1=1}^d \bE^{(1)} \Big [ \partial_{x_{i_1}} (Y^{(1)})_{t, j_1}^{r, v_1} [\partial_\mu \tilde \sU]_{j_1} (t, \lbrbrak X_{t}^{r, \xi} \rbrbrak ; (Y^{(1)})_{t}^{r, v_1} ) \Big ] .
\end{equation}

We denote
\[
\bE^{[p]} \coloneq \bE^{(1)} \ldots \bE^{(p)} .
\]

Iterating the above procedure, we obtain for all $\xi \in L^2 (\cF_r; \bR^d), p \in \llbracket 1, k \rrbracket, (i_1, \ldots, i_p) \in \llbracket 1, d \rrbracket^p$ and $v=(v_1, \ldots, v_p) \in (\bR^d)^p$ that
\begin{myalign} 
& [\partial_\mu^p \tilde \sV]_{(i_1, \ldots, i_p)} (t, r, \lbrbrak \xi \rbrbrak; v ) \\
& = \begin{myaligned}[t] \label{cal:master-PDE:prf:eq6:a}
& \sum_{j_1, \ldots, j_p=1 }^d \bE^{[p]} \Big [ \prod_{\ell=1}^p \partial_{x_{i_\ell}} (Y^{(\ell)})_{t, j_\ell}^{r, v_\ell} [\partial_\mu^p \tilde \sU]_{(j_1, \ldots, j_p)} (t, \lbrbrak X_{t}^{r, \xi} \rbrbrak ; (Y^{(1)})_{t}^{r, v_1} , \ldots, (Y^{(p)})_{t}^{r, v_p} ) \Big ] .
\end{myaligned}
\end{myalign}

Let us put the right-hand side of \zcref{cal:master-PDE:prf:eq6:a} into a more convenient form. This is done in three steps.
\begin{itemize}
\item \textit{Step 1:} For $\xi \in L^2 (\cF_r; \bR^d)$ and $v = (v_1, \ldots, v_p) \in (\bR^d)^p$, we denote
\[
\mathbf{X}^{r}_t (\xi; v) \coloneq \{ (\mathbf{U}^{(i)})^{r}_t (\xi^{(i)}; v_i) \}_{1 \le i \le p} .
\]

Recall that $(\mathbf{U}^{(i)})^{r}_t (\xi^{(i)}; v_i)$ is a pointwise copy of $\mathbf{U}^{r}_t (\xi; v_i)$ which is defined by \zcref{cal:master-PDE:prf:eq:-2:a}. In particular, $\mathbf{U}^{r}_t (\xi; v_i)$ takes values in
\begin{equation} \label{cal:master-PDE:prf:eq:-5:a}
\mathrm{R}_d \coloneq \bR^d \times \bR^d \times (\bR^d \otimes \bR^d) .
\end{equation}

Then $\mathbf{X}^{r}_t (\xi; v)$ takes values in $(\mathrm{R}_d)^p$. It follows from \zcref{cal:master-PDE:prf:eq:-4:a} that the process $\{\mathbf{X}^{r}_t (\xi; v)\}_{r \in [r, T]}$, where $(\xi, v) \in L^2 (\cF_r; \bR^d) \times (\bR^d)^p$, can be written as the unique solution of an SDE
\begin{equation} \label{cal:master-PDE:prf:system:a}
\mathbf{X}^{r}_t (\xi; v) = \mathbf{X}^{r}_r (\xi; v) + \int_r^t F_{(1, p)} (s, \mathbf{X}^{r}_s (\xi; v)  ) \diff s + \int_r^t F_{(2, p)} (s, \mathbf{X}^{r}_s (\xi; v)  ) \diff \begin{bmatrix}
B^{(1)}_s \\
\vdots \\
B^{(p)}_s
\end{bmatrix} ,
\end{equation}
for some functions $(F_{(1, p)}, F_{(2, p)})$ of class $\sM^{k-1}_b$.

\item \textit{Step 2:} Notice that all the random variables on the right-hand side of \zcref{cal:master-PDE:prf:eq6:a} (except for $X_{t}^{r, \xi}$ which only appears through its distribution $\lbrbrak X_{t}^{r, \xi} \rbrbrak$) are components of $\mathbf{X}^{r}_t (\xi; v)$. For $(i_1, \ldots, i_p) \in \llbracket 1, d \rrbracket^p$, we define
\[
G_{(i_1, \ldots, i_p)} \colon \bT \times \sP_2 ( \bR^d ) \times (\mathrm{R}_d)^p \to \bR
\]
by
\begin{myalign}
& G_{(i_1, \ldots, i_p)} \Big (t, \mu, \big ( x^{(\ell)}, y^{(\ell)}, \{z^{(\ell)}_{(j, k)}\}_{1\le j, k \le d} \big )_{1 \le \ell \le p} \Big ) \\
& \coloneq  \sum_{j_1, \ldots, j_p=1 }^d \prod_{\ell=1}^p z^{(\ell)}_{(j_\ell, i_\ell)} [\partial_\mu^p \tilde \sU]_{(j_1, \ldots, j_p)} (t, \mu ; y^{(1)} , \ldots , y^{(p)} ) .
\end{myalign}

Then \zcref{cal:master-PDE:prf:eq6:a} can be re-written as
\begin{equation} \label{cal:master-PDE:prf:eq:-6:a}
[\partial_\mu^p \tilde \sV]_{(i_1, \ldots, i_p)} (t, r, \lbrbrak \xi \rbrbrak; v ) = \bE^{[p]} [ G_{(i_1, \ldots, i_p)} (t, \lbrbrak X_{t}^{r, \xi} \rbrbrak, \mathbf{X}^{r}_t (\xi; v) ) ] .
\end{equation} 

It follows from the smoothness of $\tilde \sU$ that $G_{(i_1, \ldots, i_p)}$ is of class $\sM^{k-p}_b$.

\item \textit{Step 3:} Notice that $\mathbf{X}^{r}_t (\xi; v)$ contains pointwise copies of $X_{t}^{r, \xi}$, so $\lbrbrak X_{t}^{r, \xi} \rbrbrak$ is the distribution of the first copy and can thus be expressed as a function of the distribution of $\mathbf{X}^{r}_t (\xi; v)$. For $(i_1, \ldots, i_p) \in \llbracket 1, d \rrbracket^p$, we define
\[
H_{(i_1, \ldots, i_p)} \colon \bT \times \sP_2 ( (\mathrm{R}_d)^p ) \to \bR
\]
by
\[
H_{(i_1, \ldots, i_p)} (t, \bm{\mu}) \coloneq \bE^{[p]} [ G_{(i_1, \ldots, i_p)} (t, \pi_\sharp \bm{\mu}, \mathbf{Y})] ,
\]
where $\lbrbrak \mathbf{Y} \rbrbrak = \bm{\mu}$ and the projection $\pi \colon (\mathrm{R}_d)^p \to \bR^d$ is defined as
\[
\pi \Big ( \big ( x^{(\ell)}, y^{(\ell)}, \{z^{(\ell)}_{(j, k)}\}_{1\le j, k \le d} \big )_{1 \le \ell \le p} \Big ) \coloneq x^{(1)} .
\]

Then \zcref{cal:master-PDE:prf:eq:-6:a} can be re-written as
\begin{equation} \label{cal:master-PDE:prf:eq:-7:a}
[\partial_\mu^p \tilde \sV]_{(i_1, \ldots, i_p)} (t, r, \lbrbrak \xi \rbrbrak; v )  = H_{(i_1, \ldots, i_p)} (t, \lbrbrak \mathbf{X}^{r}_t (\xi; v) \rbrbrak ) .
\end{equation} 

By the smoothness of $G_{(i_1, \ldots, i_p)}$, \zcref{par-form:2} and \zcref{cal:exp}(3), $H_{(i_1, \ldots, i_p)}$ is of class $\sM^{k-p}_b$.
\end{itemize}

Higher-order derivatives of $(\bR^d)^p \ni v \mapsto [\partial_\mu^p \tilde \sV]_{(i_1, \ldots, i_p)} (t, r, \lbrbrak \xi \rbrbrak; v )$ will lead to complicated expressions. To reduce the burden of heavy notations, we assume $d=1$ in the rest of the proof for \hyperlink{cal:master-PDE:prf:setting:A}{\textbf{Setting A}}. Then $\partial_\mu^p \tilde \sV$ is a real-valued function and $\mathbf{X}^{r}_t (\xi; v)$ takes values in $(\mathrm{R}_1)^p$ where $\mathrm{R}_1$ defined by \zcref{cal:master-PDE:prf:eq:-5:a} is equal to $\bR^3$. For convenience, we denote
\[
H_p \coloneq H_{(1, 1, \ldots, 1)} ,
\]
where $(1, 1, \ldots, 1)$ is the $p$-dimensional vector whose all elements are equal to $1$.

Then \zcref{cal:master-PDE:prf:eq:-7:a} becomes
\begin{equation} \label{cal:master-PDE:prf:eq:-8:a}
\partial_\mu^p \tilde \sV (t, r, \lbrbrak \xi \rbrbrak; v )  = H_{p} (t, \lbrbrak \mathbf{X}^{r}_t (\xi; v) \rbrbrak )
\end{equation} 
for any $(p, \xi, v) \in \llbracket 1, k \rrbracket \times L^2 (\cF_r; \bR) \times \bR^p$.

\item \textit{Derivatives of $\bR^p \ni v \mapsto \partial_\mu^p \tilde \sV (t, r, \lbrbrak \xi \rbrbrak; v )$ where $(t, r) \in \Delta^2$.} For $i = (i_1, \ldots, i_p)$ and $j = (j_1, \ldots, j_p)$ in $\bN^p$, we write $i \le j$ if and only if $i_1 \le j_1, i_2 \le j_2, \ldots, i_p \le j_p$. Let $\ell = (\ell_1, \ldots, \ell_p) \in \bN^p$ with $|\ell|_1 \coloneq \ell_1 + \cdots + \ell_p \le k-p$. For $(\xi, v) \in L^2 (\cF_r; \bR) \times \bR^p$ , we denote
\[
\mathbf{Y}^{r}_t (\xi; v) \coloneq \{ \partial_{v_p}^{i_p} \ldots \partial_{v_1}^{i_1} \mathbf{X}^{r}_t (\xi; v) \}_{(i_1, \ldots, i_p) \le \ell} .
\]

Recall that $\{ \mathbf{X}^{r}_t (\xi; v) \}_{t \in [r, T]}$ is the unique solution to the SDE \zcref{cal:master-PDE:prf:system:a} whose coefficients are of class $\sM^{k-1}_b$. Repeatedly applying Kunita's theory of stochastic flows (see e.g. \cite[Theorem 3.3.2]{kunita_stochastic_2019}), we get functions $(F_{(1, p, \ell)}, F_{(2, p, \ell)})$ of class $\sM_b^{k-1-|\ell|_1}$ such that $\{ \mathbf{Y}^{r}_t (\xi; v) \}_{t \in [r, T]}$ is the unique solution to the SDE
\begin{equation} \label{cal:master-PDE:prf:system:c:a}
\mathbf{Y}^{r}_t (\xi; v) = \begin{myaligned}[t]
& \mathbf{Y}^{r}_r (\xi; v) + \int_r^t F_{(1, p, \ell)} (s, \mathbf{Y}^{r}_s (\xi; v) ) \diff s + \int_r^t F_{(2, p, \ell)} (s, \mathbf{Y}^{r}_s (\xi; v) ) \diff \begin{bmatrix}
B^{(1)}_s \\
\vdots \\
B^{(p)}_s
\end{bmatrix} .
\end{myaligned}
\end{equation}

Differentiating \zcref{cal:master-PDE:prf:eq:-8:a} w.r.t $v_1$, we get
\begin{equation} \label{cal:master-PDE:prf:system:d:a}
\partial_{v_1} \partial_\mu^p \tilde \sV (t, r, \lbrbrak \xi \rbrbrak; v ) = \bE^{[p]} [ \partial_\mu H_p (t, \lbrbrak \mathbf{X}^{r}_t (\xi; v) \rbrbrak ; \mathbf{X}^{r}_t (\xi; v) ) \cdot \partial_{v_1} \mathbf{X}^{r}_t (\xi; v) ] .
\end{equation}

The right-hand side of \zcref{cal:master-PDE:prf:system:d:a} can be written compactly as the evaluation of a function on $\bT \times \sP_2 ((\mathrm{R}_1)^{2p})$ at time $t$ and the joint distribution of $(\mathbf{X}^{r}_t (\xi; v) , \partial_{v_1} \mathbf{X}^{r}_t (\xi; v))$.  By the smoothness of $H_p$, \zcref{par-form:2} and \zcref{cal:exp}[(1) and (3)], this function is of class $\sM_b^{k-p-1}$. Repeating the previous procedure, we get a function $H_{(p, \ell)}$ of class $\sM_b^{k-p-|\ell|_1}$ such that for all $(t, r) \in \Delta^2$ and $(\xi, v) \in L^2 (\cF_r; \bR) \times \bR^p$:
\begin{align}
\rD^{(p, 0, 0, \ell)} \tilde \sV (t, r, \lbrbrak \xi \rbrbrak; v ) & \coloneq \partial_{v_p}^{\ell_p} \ldots \partial_{v_1}^{\ell_1} \partial_\mu^p \tilde \sV (t, r, \lbrbrak \xi \rbrbrak; v ) \\
& = H_{(p, \ell)} (t, \lbrbrak \mathbf{Y}^{r}_t (\xi; v)  \rbrbrak ). \label{cal:master-PDE:prf:system:e:a}
\end{align}
\end{enumerate}

This completes the proof for \hyperlink{cal:master-PDE:prf:setting:A}{\textbf{Setting A}}.

Next we consider \hyperlink{cal:master-PDE:prf:setting:B}{\textbf{Setting B}} where $d=1, b=0$ and $\sigma (t, x, \mu) = \sigma (t, \mu)$. Despite this simplification, we still face the burden of heavy notations. First, \zcref{cal:master-PDE:prf:eq1} becomes
\begin{equation} \label{cal:master-PDE:prf:eq1:b}
\begin{dcases}
X_t^{r, \xi} & = \xi + \int_r^t \sigma (s, \lbrbrak X_s^{r, \xi} \rbrbrak ) \diff B_s , \\
Y_t^{r, x, \lbrbrak \xi \rbrbrak} & = x + \int_r^t \sigma (s, \lbrbrak X_s^{r, \xi} \rbrbrak ) \diff B_s . 
\end{dcases}
\end{equation}

Second, \zcref{cal:master-PDE:prf:eq:-1} becomes $\partial_x Y_t^{r, x, \lbrbrak \xi \rbrbrak} = 1$. Third, \zcref{cal:master-PDE:prf:eq2} becomes
\begin{align}
U_t^{r, x, \lbrbrak \xi \rbrbrak} (y) & = \int_r^t \bE [ \partial_\mu \sigma (s, \lbrbrak X_s^{r, \xi} \rbrbrak ; Y_s^{r, y, \lbrbrak \xi \rbrbrak} ) + \partial_\mu \sigma (s, \lbrbrak X_s^{r, \xi} \rbrbrak ; X_s^{r, \xi} ) U_s^{r, \xi} (y) ] \diff B_s \\
& = U_t^{r, \xi} (y) \label{cal:master-PDE:prf:measure-derivative}
\end{align}
for all $(x, \xi) \in \bR \times L^2 (\cF_r; \bR)$.

\begin{enumerate}[label=(\theenumi B)]
\item \textit{Higher-order derivatives of $\xi \mapsto Y_t^{r, x, \lbrbrak \xi \rbrbrak}$}. This is accomplished in \cite[Section 2.4]{chassagneux_weak_2022} which we summarize in the following.  By \zcref{cal:master-PDE:prf:measure-derivative}, $U_t^{r, x, \lbrbrak \xi \rbrbrak} (y)$ does not depend on $x$. Hence we can define
\[
\partial_\mu^p X_t^{r, \lbrbrak \xi \rbrbrak} (v_1, \ldots, v_p) \coloneq \partial_\mu^p Y_t^{r, x, \lbrbrak \xi \rbrbrak} (v_1, \ldots, v_p) .
\]
for all $v_1, \ldots, v_p \in \bR$ and $t \in [r, T]$.

We need to introduce some notations:

\begin{itemize}
\item For $p \in \bN^*$ and $i \in \llbracket 1, p \rrbracket$, we denote by $\Lambda_{i, p}$ the set of all strictly increasing functions $\theta \colon \llbracket 1, i \rrbracket \to \llbracket 1, p \rrbracket$.

\item For $p \in \bN^*$, we denote by $\mathrm{R}_p$ the set of all $y = \{ y_{(j, \ell)} \}_{1 \le j, \ell \le p}$ where $y_{(j, \ell)} \in \bR$.

\item For $p \in \bN^*$, we denote by $\mathrm{T}_p$ the set of all $z = \{ z_{(j, i, \theta)} \}_{\substack{1 \le j, i \le p \\ \theta \in \Lambda_{i, p}}}$ where $z_{(j, i, \theta)} \in \bR$.
\item For a function
\[
f \colon \bT \times \sP_2 (\bR) \times \bR^p \times \mathrm{R}_p \times \mathrm{T}_p \to \bR, \, (s, \mu, x, y, z) \mapsto f (s, \mu, x, y, z) ,
\]
we denote by $\partial_{x_j} f$ the partial derivative of $f$ w.r.t $x_j$, by $\partial_{y_{(j, \ell)}} f$ the partial derivative of $f$ w.r.t $y_{(j, \ell)}$, and by $\partial_{z_{(j, i, \theta)}} f$ the partial derivative of $f$ w.r.t $z_{(j, i, \theta)}$.

\item For $v = (v_1, \ldots, v_p) \in \bR^p$ and $\theta \in \Lambda_{i, p}$, we denote $v_{\theta} \coloneq (v_{\theta(1)}, \ldots, v_{\theta(i)}) \in \bR^i$.
\end{itemize}

It is straightforward to see that $\dim \mathrm{R}_p = p^2$ and $\dim \mathrm{T}_p = p \sum_{i=1}^p \binom{p}{i} = p(2^p - 1)$.  Then $\bR^p \times \mathrm{R}_p \times \mathrm{T}_p$ is isomorphic to $\bR^{p(p+2^p)}$. 

By \cite[Theorem 3.4]{chassagneux_weak_2022}, for each $p \in \llbracket 1, k \rrbracket$ and $v \in \bR^p$, the $p$-th derivative in measure $\partial_\mu^p X_t^{r, \lbrbrak \xi \rbrbrak} (v)$ exists and is the unique solution to the SDE
\begin{equation} \label{cal:master-PDE:prf:der-in-mea:b}
\partial_\mu^p X_t^{r, \lbrbrak \xi \rbrbrak} (v) = \begin{myaligned}[t]
& \int_r^t \bE^{[p]} [ \varphi_p (s, \lbrbrak X_s^{r, \xi} \rbrbrak , \mathbf{X}^{r}_s (\xi; v) ) ] \diff B_s ,
\end{myaligned}
\end{equation}
where
\begin{align} \label{cal:master-PDE:prf:master-process:b}
\bE^{[p]} & \coloneq \bE^{(1)} \bE^{(2)} \ldots \bE^{(p)} , \\
\mathbf{X}^{r}_s (\xi; v) & \coloneq \begin{myaligned}[t]
& \Big ( \{ ( X^{(j)} )_s^{r, \xi^{(j)}} \}_{1 \le j \le p} ; \{ ( Y^{(j)} )_s^{r, v_\ell, \lbrbrak \xi \rbrbrak} \}_{1 \le j, \ell \le p} ; \{ \partial_\mu^i (X^{(j)})^{r, \lbrbrak \xi \rbrbrak}_s (v_{\theta}) \}_{\substack{1 \le j, i \le p \\ \theta \in \Lambda_{i, p}}} \Big ) ,
\end{myaligned} 
\end{align}
and the sequence $\{ \varphi_p \}_{p \in \llbracket 1, k \rrbracket}$ of functions
\[
\varphi_p \colon \bT \times \sP_2 (\bR) \times \bR^p \times \mathrm{R}_p \times \mathrm{T}_p \to \bR
\]
is defined as
\[
\varphi_1 (s, \mu, x, y, z) \coloneq \partial_\mu \sigma (s, \mu; y) + \partial_\mu \sigma (s, \mu; x) z
\]
and the recurrence relation
\begin{myalign}
& \varphi_{p+1} (s, \mu, x, y, z) \\
& \coloneq \begin{myaligned}[t]
& \partial_\mu \varphi_p ( s, \mu, x^{[p]}, y^{[p]}, z^{[p]} ; y_{(p+1, p+1)} ) \\
& + \partial_\mu \varphi_p ( s, \mu, x^{[p]}, y^{[p]}, z^{[p]} ; x_{p+1} ) z_{(p+1, 1, P_{p+1})} \\
& + \sum_{j=1}^p \partial_{x_j} \varphi_p (s, \mu, (x_1, \ldots, x_{j-1}, y_{(j, p+1)}, x_{j+1}, \ldots, x_p), y^{[p]}, z^{[p]} ) \\
& + \sum_{j=1}^p \partial_{x_j} \varphi_p (s, \mu, x^{[p]}, y^{[p]} , z^{[p]} ) z_{(j, 1 , P_{p+1})} \\
& + \sum_{j, \ell=1}^p \partial_{y_{(j, \ell)}} \varphi_p (s, \mu, x^{[p]}, y^{[p]} , z^{[p]} ) z_{(j, 1 , P_{p+1})} \\
& + \sum_{j, i=1}^p \sum_{\theta \in \Lambda_{i, p}} \partial_{z_{(j, i, \theta)}} \varphi_p (s, \mu, x^{[p]}, y^{[p]}, z^{[p]} ) z_{(j, i+1 , \theta_{i+1, p+1})} .
\end{myaligned} \label{cal:master-PDE:prf:recurrence:b}
\end{myalign}

In \zcref{cal:master-PDE:prf:recurrence:b}, we use the notations
\begin{itemize}
\item the truncations of $(x, y, z) \in \bR^{p+1} \times \mathrm{R}_{p+1} \times \mathrm{T}_{p+1}$ are defined as
\begin{align}
x^{[p]} & \coloneq \{ x_j \}_{1 \le j \le p} \in \bR^p, \\
y^{[p]} & \coloneq \{ y_{(j, \ell)} \}_{1 \le j, \ell \le p} \in \mathrm{R}_p, \\
z^{[p]} & \coloneq \{ z_{(j, i, \theta)} \}_{\substack{1 \le j, i \le p \\ \theta \in \Lambda_{i, p}}} \in \mathrm{T}_p ,
\end{align}

\item $P_{p+1} \in \Lambda_{1, p+1}$ is defined as $P_{p+1} (1) \coloneq p+1$.

\item $\theta_{i+1, p+1} \in \Lambda_{i+1, p+1}$ is defined as $\theta_{i+1, p+1} \restriction \llbracket 1, i \rrbracket \coloneq \theta$ and $\theta_{i+1, p+1} (i+1) \coloneq p+1$.
\end{itemize}

By the smoothness of $\sigma$ and \zcref{cal:exp}(1), $\varphi_p$ is of class $\sM^{k-p}_b$. Let us put the right-hand side of \zcref{cal:master-PDE:prf:der-in-mea:b} into a more convenient form. For this purpose, we define
\[
\psi_p \colon \bT \times \sP_2 (\bR^p \times \mathrm{R}_p \times \mathrm{T}_p) \to \bR
\]
by
\[
\psi_p (s, \bm{\mu}) \coloneq \bE[\varphi_p (s, \pi_\sharp \bm{\mu}, \mathbf{Y}_1, \mathbf{Y}_2, \mathbf{Y}_3)] ,
\]
where $\lbrbrak (\mathbf{Y}_1, \mathbf{Y}_2, \mathbf{Y}_3) \rbrbrak = \bm{\mu}$ and $\pi \colon \bR^p \times \mathrm{R}_p \times \mathrm{T}_p \to \bR$ such that $\pi (x, y, z) \coloneq x_1$.

Then \zcref{cal:master-PDE:prf:der-in-mea:b} can be re-written as
\begin{equation} \label{cal:master-PDE:prf:der-in-mea:a:b}
\partial_\mu^p X_t^{r, \lbrbrak \xi \rbrbrak} (v) = \int_r^t \psi_p (s, \lbrbrak \mathbf{X}^{r}_s (\xi; v) \rbrbrak ) \diff B_s .
\end{equation}

By the smoothness of $\varphi_p$, \zcref{par-form:2} and \zcref{cal:exp}(3), $\psi_p$ is of class $\sM^{k-p}_b$.

\item \textit{Higher-order derivatives of $\mu \mapsto \tilde \cV (t, r, \mu)$ where $(t, r) \in \Delta^2$.} This is accomplished in \cite{chassagneux_weak_2022} which we summarize in the following. We have that \zcref{cal:master-PDE:prf:value-function} becomes
\begin{myalign}
& \partial_\mu \tilde \sV (t, r, \lbrbrak \xi \rbrbrak; x ) \\
& = \bE^{(1)} [ \partial_\mu \tilde \sU (t, \lbrbrak X_{t}^{r, \xi} \rbrbrak ; (Y^{(1)})_{t}^{r, x, \lbrbrak \xi \rbrbrak} ) + \partial_\mu (X^{(1)})_{t}^{r, \xi^{(1)}} (x) \partial_\mu \tilde \sU (t, \lbrbrak X_{t}^{r, \xi} \rbrbrak ; (X^{(1)})_{t}^{r, \xi^{(1)}} ) ] .
\end{myalign}
for every $(x, \xi) \in \bR \times L^2 (\cF_r; \bR)$. Iterating the previous procedure, we obtain for any $(p, \xi, v) \in \llbracket 1, k \rrbracket \times L^2 (\cF_r; \bR) \times \bR^p$ that
\begin{equation} \label{cal:master-PDE:prf:eq6:b}
\partial_\mu^p \tilde \sV (t, r, \lbrbrak \xi \rbrbrak; v ) = \begin{myaligned}[t]
& \bE^{[p]} [ G_p (t, \lbrbrak X_t^{r, \xi} \rbrbrak , \mathbf{X}^{r}_t (\xi; v) ) ] .
\end{myaligned}
\end{equation}

Recall that $\bE^{[p]}$ and $\mathbf{X}^{r}_t (\xi; v)$ are defined by \zcref{cal:master-PDE:prf:master-process:b}. Above, the sequence $\{ G_p \}_{p \in \llbracket 1, k \rrbracket}$ of functions
\[
G_p \colon \bT \times \sP_2 (\bR) \times \bR^p \times \mathrm{R}_p \times \mathrm{T}_p \to \bR
\]
satisfies \zcref{cal:master-PDE:prf:recurrence:b} with $G_1$ defined as
\[
G_1 (t, \mu, x, y, z) \coloneq \partial_\mu \tilde \sU (t, \mu; y) + \partial_\mu \tilde \sU (t, \mu; x) z .
\]

It follows from the smootheness of $\tilde \sU$ that $G_p$ is of class $\sM^{k-p}_b$. As for the right-hand side of \zcref{cal:master-PDE:prf:der-in-mea:b}, let us put that of \zcref{cal:master-PDE:prf:eq6:b} into a more convenient form. For this purpose, we define
\[
H_p \colon \bT \times \sP_2 (\bR^p \times \mathrm{R}_p \times \mathrm{T}_p) \to \bR
\]
by
\[
H_p (t, \bm{\mu}) \coloneq \bE [ G_p (t, \pi_\sharp \bm{\mu}, \mathbf{Y}_1, \mathbf{Y}_2, \mathbf{Y}_3)] ,
\]
where $\lbrbrak  (\mathbf{Y}_1, \mathbf{Y}_2, \mathbf{Y}_3) \rbrbrak = \bm{\mu}$ and $\pi \colon \bR^p \times \mathrm{R}_p \times \mathrm{T}_p \to \bR$ such that $\pi (x, y, z) \coloneq x_1$.

Then \zcref{cal:master-PDE:prf:eq6:b} can be re-written as
\begin{equation} \label{cal:master-PDE:prf:eq6:a:b}
\partial_\mu^p \tilde \sV (t, r, \lbrbrak \xi \rbrbrak; v ) = H_p (t, \lbrbrak \mathbf{X}^{r}_t (\xi; v) \rbrbrak ) .
\end{equation}

By the smoothness of $G_p$, \zcref{par-form:2} and \zcref{cal:exp}(3), $H_p$ is of class $\sM^{k-p}_b$.

\item \textit{Derivatives of $\bR^p \ni v \mapsto \partial_\mu^p \tilde \sV (t, r, \lbrbrak \xi \rbrbrak; v )$ where $(t, r) \in \Delta^2$.} By \zcref{cal:master-PDE:prf:eq1:b} and \zcref{cal:master-PDE:prf:der-in-mea:a:b}, given $(p, \xi) \in \llbracket 1, k \rrbracket \times L^2 (\cF_r; \bR)$ and $v=(v_1, \ldots, v_p) \in \bR^p$, the process $\{ \mathbf{X}^{r}_t (\xi; v) \}_{t \in [r, T]}$ defined by \zcref{cal:master-PDE:prf:master-process:b} is the unique solution to the system
\begin{equation} \label{cal:master-PDE:prf:system:b}
\begin{dcases}
( X^{(j)} )_t^{r, \xi^{(j)}} & = \xi^{(j)} + \int_r^t \sigma (s, \lbrbrak X_s^{r, \xi} \rbrbrak ) \diff B_s^{(j)} , \\
( Y^{(j)} )_t^{r, v_\ell, \lbrbrak \xi \rbrbrak} & = v_\ell + \int_r^t \sigma (s, \lbrbrak X_s^{r, \xi} \rbrbrak ) \diff B_s^{(j)} , \\
\partial_\mu^i ( X^{(j)} )_t^{r, \lbrbrak \xi \rbrbrak} (v_{\theta}) & = \begin{myaligned}[t]
& \int_r^t \psi_i (s, \lbrbrak \mathbf{X}^{r}_s (\xi; v_{\theta}) \rbrbrak ) \diff B_s^{(j)} ,
\end{myaligned}
\end{dcases}
\end{equation}
where $1 \le j, i \le p; \theta \in \Lambda_{i, p}$ and $v_{\theta} \coloneq (v_{\theta(1)}, \ldots, v_{\theta(i)})$.

Then \zcref{cal:master-PDE:prf:system:b} can be re-written as
\begin{equation} \label{cal:master-PDE:prf:system:a:b}
\mathbf{X}^{r}_t (\xi; v) = \mathbf{X}^{r}_r (\xi; v) + \int_r^t F_{(2, p)} (s, \lbrbrak \mathbf{X}^{r}_s (\xi; v) \rbrbrak ) \diff \begin{bmatrix}
B^{(1)}_s \\
\vdots \\
B^{(p)}_s
\end{bmatrix}
\end{equation}
for a function
\[
F_{(2, p)} \colon \bT \times \sP_2 (\bR^p \times \mathrm{R}_p \times \mathrm{T}_p) \to (\bR^p \times \mathrm{R}_p \times \mathrm{T}_p) \otimes \bR^p
\]
that we specify in the following. Recall that $\bR^p \times \mathrm{R}_p \times \mathrm{T}_p$ is isomorphic to $\bR^{p(p+2^p)}$. Each row of $F_{(2, p)} (t, \bm{\mu})$ contains at most one non-zero entry.
\begin{itemize}
\item For the rows corresponding to $( X^{(j)} )_t^{r, \xi^{(j)}}$ and $( Y^{(j)} )_t^{r, v_\ell, \lbrbrak \xi \rbrbrak}$, such non-zero entry is located at the $j$-th coordinate with value $\sigma (t, \pi_\sharp \bm{\mu} )$. Here the projection $\pi \colon \bR^p \times \mathrm{R}_p \times \mathrm{T}_p \to \bR$ is such that $\pi (x, y, z) \coloneq x_1$.

\item For $\theta \in \Lambda_{i, p}$, we define the projection $\pi^\theta \colon \bR^p \times \mathrm{R}_p \times \mathrm{T}_p \to \bR^i \times R_i \times T_i$ by
\[
\pi^\theta (x, y, z) \coloneq \Big ( \{ x_{j'} \}_{1 \le j' \le i} ; \{ y_{(j', \ell')} \}_{1 \le j', \ell' \le i} ; \{ z_{(j', i', \theta \circ \theta')} \}_{\substack{1 \le j', i' \le i \\ \theta' \in \Lambda_{i', i}}} \Big ) .
\]

\item For the row corresponding to $\partial_\mu^i ( X^{(j)} )_t^{r, \lbrbrak \xi \rbrbrak} (v_{\theta})$, such non-zero entry is located at the $j$-th coordinate with value $\psi_i (t, \pi^\theta_\sharp \bm{\mu})$.
\end{itemize}

Recall the notation that $\pi_\sharp \bm{\mu}$ and $\pi^\theta_\sharp \bm{\mu}$ are the push-forward measures of $\bm{\mu}$ through $\pi, \pi^\theta$ respectively.

By the smoothness of $(\sigma, \{\psi_i\}_{1 \le i \le p})$ and \zcref{par-form:2}, $F_{(2, p)}$ is of class $\sM^{k-p}_b$. It follows from \zcref{cal:master-PDE:prf:system:b} that $\mathbf{X}^{r}_r (\xi; v)$ is a column vector with a specific form. For the row corresponding to $( X^{(j)} )_t^{r, \xi^{(j)}}$, its entry is $\xi^{(j)}$. For the row corresponding to $( Y^{(j)} )_t^{r, v_\ell, \lbrbrak \xi \rbrbrak}$, its entry is $v_\ell$. For the row corresponding to $\partial_\mu^i ( X^{(j)} )_t^{r, \lbrbrak \xi \rbrbrak} (v_{\theta})$, its entry is $0$.

Differentiating \zcref{cal:master-PDE:prf:system:a:b} w.r.t $v_1$, we get
\begin{equation} \label{cal:master-PDE:prf:system:b:b}
\partial_{v_1} \mathbf{X}^{r}_t (\xi; v) = \partial_{v_1} \mathbf{X}^{r}_r (\xi; v) + \int_r^t \bE^{[p]} [ \partial_\mu F_{(2, p)} (s, \lbrbrak \mathbf{X}^{r}_s (\xi; v) \rbrbrak ; \mathbf{X}^{r}_s (\xi; v) ) \partial_{v_1} \mathbf{X}^{r}_s (\xi; v) ] \diff \begin{bmatrix}
B^{(1)}_s \\
\vdots \\
B^{(p)}_s
\end{bmatrix} .
\end{equation}

The term inside the stochastic integral of \zcref{cal:master-PDE:prf:system:b:b} can be written compactly as the evaluation of a function on $\bT \times \sP_2 ((\bR^p \times \mathrm{R}_p \times \mathrm{T}_p)^2)$ at time $s$ and the joint distribution of $(\mathbf{X}^{r}_s (\xi; v) , \partial_{v_1} \mathbf{X}^{r}_s (\xi; v))$.  By the smoothness of $F_{(2, p)}$, \zcref{par-form:2} and \zcref{cal:exp}[(1) and (3)], this function is of class $\sM_b^{k-p-1}$.

For $i = (i_1, \ldots, i_p)$ and $j = (j_1, \ldots, j_p)$ in $\bN^p$, we write $i \le j$ if and only if $i_1 \le j_1, i_2 \le j_2, \ldots, i_p \le j_p$. We denote
\[
\mathbf{Y}^{r}_t (\xi; v) \coloneq \{ \partial_{v_p}^{i_p} \ldots \partial_{v_1}^{i_1} \mathbf{X}^{r}_t (\xi; v) \}_{(i_1, \ldots, i_p) \le \ell} .
\]

Repeating the previous procedure, we get a function $F_{(2, p, \ell)}$ of class $\sM_b^{k-p-|\ell|_1}$ such that $\{ \mathbf{Y}^{r}_t (\xi; v) \}_{t \in [r, T]}$ is the unique solution to the SDE
\begin{equation} \label{cal:master-PDE:prf:system:c:b}
\mathbf{Y}^{r}_t (\xi; v) = \mathbf{Y}^{r}_r (\xi; v) + \int_r^t F_{(2, p, \ell)} (s, \lbrbrak \mathbf{Y}^{r}_s (\xi; v)  \rbrbrak ) \diff \begin{bmatrix}
B^{(1)}_s \\
\vdots \\
B^{(p)}_s
\end{bmatrix} .
\end{equation}

Differentiating \zcref{cal:master-PDE:prf:eq6:a:b} w.r.t $v_1$, we get
\begin{equation} \label{cal:master-PDE:prf:system:d:b}
\partial_{v_1} \partial_\mu^p \tilde \sV (t, r, \lbrbrak \xi \rbrbrak; v ) = \bE^{[p]} [ \partial_\mu H_p (t, \lbrbrak \mathbf{X}^{r}_t (\xi; v) \rbrbrak ; \mathbf{X}^{r}_t (\xi; v) ) \partial_{v_1} \mathbf{X}^{r}_t (\xi; v) ] .
\end{equation}

The right-hand side of \zcref{cal:master-PDE:prf:system:d:b} can be written compactly as the evaluation of a function on $\bT \times \sP_2 ((\bR^p \times \mathrm{R}_p \times \mathrm{T}_p)^2)$ at time $t$ and the joint distribution of $(\mathbf{X}^{r}_t (\xi; v) , \partial_{v_1} \mathbf{X}^{r}_t (\xi; v))$.  By the smoothness of $H_p$, \zcref{par-form:2} and \zcref{cal:exp}[(1) and (3)], this function is of class $\sM_b^{k-p-1}$. Repeating the previous procedure, we get a function $H_{(p, \ell)}$ of class $\sM_b^{k-p-|\ell|_1}$ such that
\begin{align}
\rD^{(p, 0, 0, \ell)} \tilde \sV (t, r, \lbrbrak \xi \rbrbrak; v ) & \coloneq \partial_{v_p}^{\ell_p} \ldots \partial_{v_1}^{\ell_1} \partial_\mu^p \tilde \sV (t, r, \lbrbrak \xi \rbrbrak; v ) \\
& = H_{(p, \ell)} (t, \lbrbrak \mathbf{Y}^{r}_t (\xi; v)  \rbrbrak ). \label{cal:master-PDE:prf:system:e:b}
\end{align}
\end{enumerate}

This completes the proof for \hyperlink{cal:master-PDE:prf:setting:B}{\textbf{Setting B}}.
\end{proof}

To our best knowledge, the final two parts about higher-order derivatives in time are new in the literature.

\begin{enumerate}
\item \textit{Derivatives of $[r, T] \ni t \mapsto \rD^{(p, 0, 0, \ell)}  \tilde \sV (t, r, \mu; v )$ where $p \in \llbracket 1, k \rrbracket$ and $\ell = (\ell_1, \ldots, \ell_p) \in \bN^p$ such that $p + |\ell|_1 \le k$.} Recall that $|\ell|_1 \coloneq \ell_1 + \cdots + \ell_p$. Let $\beta \coloneq (p, 0, 0, \ell)$. Let the functions $(F_{(1, p, \ell)}, F_{(2, p, l)}, H_{(p, \ell)})$ be given by \zcref{cal:master-PDE:prf:long-thm}. Then
\begin{itemize}
\item $(F_{(1, p, \ell)}, F_{(2, p, l)}, H_{(p, \ell)})$ are of class $\sM^{k-p-|\ell|_1}_b$.
\item For all $(\xi, v) \in L^2 (\cF_r; \bR) \times \bR^p$,
\begin{equation} \label{cal:master-PDE:prf:system:e:a:1}
\rD^{\beta} \tilde \sV (t, r, \lbrbrak \xi \rbrbrak; v ) = H_{(p, \ell)} (t, \lbrbrak \mathbf{Y}^{r}_t (\xi; v)  \rbrbrak ) ,
\end{equation}
where
$\{ \mathbf{Y}^{r}_t (\xi; v) \}_{t \in [r, T]}$ is the unique solution to the multi-dimensional SDE
\begin{equation}
\mathbf{Y}^{r}_t (\xi; v) = \begin{myaligned}[t] \label{cal:master-PDE:prf:system:c:a:1}
& \mathbf{Y}^{r}_r (\xi; v) + \int_r^t F_{(1, p, \ell)} (s, \mathbf{Y}^{r}_s (\xi; v), \lbrbrak \mathbf{Y}^{r}_s (\xi; v) \rbrbrak) \diff s \\
& + \int_r^t F_{(2, p, \ell)} (s, \mathbf{Y}^{r}_s (\xi; v), \lbrbrak \mathbf{Y}^{r}_s (\xi; v) \rbrbrak ) \diff \begin{bmatrix}
B^{(1)}_s \\
\vdots \\
B^{(p)}_s
\end{bmatrix} .
\end{myaligned}
\end{equation}
\end{itemize}

WLOG, we assume $r < t < T$. Let $h>0$ such that $t-h \ge 0$ and $t+h \le T$. We define an operator $\rL_{(p, \ell)}$ acting on $f$ of class $\sM^2_b$ by
\[
\rL_{(p, \ell)} f (s, \bm{\mu}) \coloneq \begin{myaligned}[t]
& \partial_s f (s, \bm{\mu}) + \bE \Big [ F_{(1, p, \ell)} (s, \mathbf{Z}, \bm{\mu}) \cdot \partial_\mu  f(s, \bm{\mu}; \mathbf{Z}) \\
& + \frac{1}{2} \{ F_{(2, p, \ell)} F_{(2, p, \ell)}^\top \} (s, \mathbf{Z}, \bm{\mu}) : \nabla_v \partial_\mu  f(s, \bm{\mu}; \mathbf{Z}) \Big ] ,
\end{myaligned}
\]
where $\lbrbrak \mathbf{Z} \rbrbrak = \bm{\mu}$.

By \zcref{cal:master-PDE:prf:system:e:a:1}, \zcref{cal:master-PDE:prf:system:c:a:1} and \zcref{cal:Ito-lemma},
\begin{equation} \label{cal:master-PDE:prf:time:eq1:a}
\begin{myaligned} 
& \rD^{\beta} \tilde \sV (t+h, r, \lbrbrak \xi \rbrbrak; v) - \rD^{\beta} \tilde \sV (t, r, \lbrbrak \xi \rbrbrak; v)  \\
& = \int_{t}^{t+h} \rL_{(p, \ell)} H_{(p, \ell)} (s, \lbrbrak \mathbf{Y}^{r}_s (\xi; v) \rbrbrak ) \diff s .
\end{myaligned}
\end{equation}

By \zcref{cal:exp}[(1) and (3)] and the smoothness of $(F_{(1, p, \ell)}, F_{(2, p, l)}, H_{(p, \ell)})$, the map $\rL_{(p, \ell)} H_{(p, \ell)}$ is of class $\sM^{k-p-|\ell|_1-2}_b$. Applying \zcref{cal:exp}(4) to the SDE \zcref{cal:master-PDE:prf:system:c:a:1}, we obtain that
\[
[r, T] \ni s \mapsto \rL_{(p, \ell)} H_{(p, \ell)} (s, \lbrbrak \mathbf{Y}^{r}_s (\xi; v) \rbrbrak )
\]
is continuous. By \zcref{cal:master-PDE:prf:time:eq1:a},
\[
\begin{myaligned}
& \lim_{h \downarrow 0} \frac{\rD^{\beta}  \tilde \sV (t+h, r, \lbrbrak \xi \rbrbrak; v) - \rD^{\beta}  \tilde \sV (t, r, \lbrbrak \xi \rbrbrak; v)}{h}  \\
& = \rL_{(p, \ell)} H_{(p, \ell)} (t, \lbrbrak \mathbf{Y}^{r}_t (\xi; v) \rbrbrak ) .
\end{myaligned}
\]

Similarly,
\[
\begin{myaligned}
& \lim_{h \downarrow 0} \frac{\rD^{\beta}  \tilde \sV (t, r, \lbrbrak \xi \rbrbrak; v) - \rD^{\beta}  \tilde \sV (t-h, r, \lbrbrak \xi \rbrbrak; v)}{h}  \\
& = \rL_{(p, \ell)} H_{(p, \ell)} (t, \lbrbrak \mathbf{Y}^{r}_t (\xi; v) \rbrbrak ) .
\end{myaligned}
\]

Therefore,
\begin{equation} \label{cal:master-PDE:prf:time:eq2:a}
\partial_t \rD^{\beta}  \tilde \sV (t, r, \lbrbrak \xi \rbrbrak; v) = \rL_{(p, \ell)} H_{(p, \ell)} (t, \lbrbrak \mathbf{Y}^{r}_t (\xi; v) \rbrbrak) .
\end{equation}

Let $m \in \bN$ such that $p + |\ell|_1 + 2m \le k$. Repeating the above procedure, we get that
\[
t \mapsto \rD^{\beta} \tilde \sV (t, r, \lbrbrak \xi \rbrbrak; v)
\]
is $m$-times continuously differentiable and that for $\alpha \coloneq (p, 0, (m, 0), \ell)$:
\begin{align}  
\rD^{\alpha} \tilde \sV (t, r, \lbrbrak \xi \rbrbrak; v) & \coloneq  \partial_t^{m} \rD^{\beta} \tilde \sV (t, r, \lbrbrak \xi \rbrbrak; v) \\
& = H_{(p, \ell, m)} (t, \lbrbrak \mathbf{Y}^{r}_t (\xi; v) \rbrbrak ) , \label{cal:master-PDE:prf:time:eq1:a:a}
\end{align}
where
\[
H_{(p, \ell, m)} \coloneq \rL_{(p, \ell)} \circ \cdots \circ \rL_{(p, \ell)} H_{(p, \ell)}
\]
is the $m$-times composition of $\rL_{(p, \ell)}$ applied on $H_{(p, \ell)}$. Clearly, $H_{(p, \ell, m)}$ is of class $\sM_b^{k-p-|\ell|_1-2m}$. By the smootheness of $(F_{(1, p, \ell)}, F_{(2, p, \ell)}, H_{(p, \ell, m)})$ and classical stability estimates for \zcref{cal:master-PDE:prf:system:c:a:1}, we have the following properties that will be called \MakeLinkTarget*{cal:master-PDE:prf:pro-P}$(\mathbf{P})$:
\begin{itemize}
\item the function $\rD^{\alpha}  \tilde \sV$ is continuous.
\item there exists a constant $C$ (depending only on $(b, \sigma, \tilde \cU, k)$) such that for all $(t, r) \in \Delta^2; \mu, \mu' \in \sP_2 (\bR)$ and $v, v' \in \bR^p$:
\begin{equation} \label{cal:master-PDE:prf:stability-bound:a}
\begin{aligned}
| \rD^{\alpha}  \tilde \sV (t, r, \mu; v ) | & \le C , \\
| \rD^{\alpha}  \tilde \sV (t, r, \mu; v ) - \rD^{\alpha}  \tilde \sV (t, r, \mu'; v' ) | & \le C \{ |v-v'| + \sW_2(\mu, \mu') \} .
\end{aligned}
\end{equation}
\end{itemize}

With all above preparation, we are in the position to prove the smoothness of $\tilde \sV$. Recall that $d=1$ in both \hyperlink{cal:master-PDE:prf:setting:A}{\textbf{Setting A}} and \hyperlink{cal:master-PDE:prf:setting:B}{\textbf{Setting B}}.

\item \textit{Mixed derivatives of $\tilde \sV (t, r, \mu)$}. By \cite[Theorem 2.18]{chassagneux_weak_2022}, the map
\[
[0, t] \times \sP_2 (\bR) \ni (r, \mu) \mapsto \tilde \sV(t, r, \mu)
\]
is of class $\sM^2_b$ and satisfies the PDE
\begin{equation} \label{cal:master-PDE:prf:recur-time-diff:1:a}
- \partial_r \tilde \sV (t, r, \mu) =  \begin{myaligned}[t]
& \bE \Big [ b (r, Z, \mu) \partial_\mu \tilde \sV (t, r, \mu; Z) + \frac{1}{2} \sigma^2 (r, Z, \mu) \partial_v \partial_\mu \tilde \sV (t, r, \mu; Z) \Big ]
\end{myaligned}
\end{equation}
where $\lbrbrak Z \rbrbrak = \mu$. That being said, $(t, r, \mu) \mapsto \partial_r \tilde \sV (t, r, \mu)$ can be expressed through the derivatives of $b(r, v, \mu), \sigma^2 (r, v, \mu)$ and $\tilde \sV (t, r, \mu)$ w.r.t $(v, \mu)$. On the other hand, \hyperlink{cal:master-PDE:prf:pro-P}{$(\mathbf{P})$} provides good properties of the derivatives of $(t, \mu) \mapsto \tilde \sV (t, r, \mu)$. Therefore, $(t, r, \mu) \mapsto \partial_r \tilde \sV (t, r, \mu)$ inherits these good properties.

More precisely, by \zcref{cal:master-PDE:prf:recur-time-diff:1:a}, \hyperlink{cal:master-PDE:prf:pro-P}{$(\mathbf{P})$} and \zcref{cal:exp}[(1) and (3)],
\begin{itemize}
\item $[r, T] \times \sP_2 (\bR) \ni (t, \mu) \mapsto \partial_r \tilde \sV (t, r, \mu)$ exists and is of class $\sM_b^{k-2}$ with the constant $C$ in \zcref{cal:space1:ineq1} uniform in $r$.
\item for any multi-index $\alpha = (p, 0, (m, 0), \ell)$ of order $|\alpha| =p+|\ell|_1+2m \le k-2$, the map $\rD^\alpha \partial_r \tilde \sV$ is continuous; and there exists a constant $C$ (depending only on $(b, \sigma, \tilde \cU, k)$) such that for all $(t, r) \in \Delta^2; \mu, \mu' \in \sP_2 (\bR)$ and $v, v' \in \bR^p$:
\begin{equation}
\begin{aligned}
| \rD^\alpha \partial_r \tilde \sV (t, r, \mu; v ) | & \le C , \\
| \rD^\alpha \partial_r \tilde \sV (t, r, \mu; v ) - \rD^\alpha \partial_r \tilde \sV (t, r, \mu'; v' ) | & \le C \{ |v-v'| + \sW_2(\mu, \mu') \} .
\end{aligned}
\end{equation}
\end{itemize}

We can take the derivative w.r.t $r$ in \zcref{cal:master-PDE:prf:recur-time-diff:1:a}. Then $(t, r, \mu) \mapsto \partial_r^2 \tilde \sV (t, r, \mu)$ can be expressed through the derivatives of $b(r, v, \mu), \sigma^2 (r, v, \mu), \tilde \sV (t, r, \mu)$ and $\partial_r \tilde \sV (t, r, \mu)$ w.r.t $(v, \mu)$. So the above procedure can be repeated. More precisely, we will prove by induction that for $i \in \llbracket 1, \lfloor k/2 \rfloor \rrbracket$ that
\begin{itemize}
\item $[0, t] \times \sP_2 (\bR) \ni (r, \mu) \mapsto \partial_r^{i-1}  \tilde \sV (t, r, \mu)$ exists and is of class $\sM^2_b$.
\item $[r, T] \times \sP_2 (\bR) \ni (t, \mu) \mapsto \partial_r^{i} \tilde \sV (t, r, \mu)$ exists and is of class $\sM_b^{k-2i}$ with the constant $C_i$ in \zcref{cal:space1:ineq1} uniform in $r$. It has the form
\begin{equation} \label{cal:master-PDE:prf:recur-time-diff:2:a}
- \partial_r^i \tilde \sV (t, r, \mu) = \begin{myaligned}[t]
& \sum_{j=0}^{i-1} \begin{pmatrix} i-1 \\ j \end{pmatrix} \bE \Big [ \partial_r^{i-j-1} b (r, Z, \mu) \partial_\mu \partial_r^{j} \tilde \sV (t, r, \mu; Z) \\
& + \frac{1}{2} \partial_r^{i-j-1} \sigma^2 (r, Z, \mu) \partial_v \partial_\mu \partial_r^{j} \tilde \sV (t, r, \mu; Z) \Big ] ,
\end{myaligned}
\end{equation}
where $\lbrbrak Z \rbrbrak = \mu$.
\item for any multi-index $\alpha = (p, 0, (m, 0), \ell)$ of order $|\alpha| =p+|\ell|_1+2m \le k-2i$, if $i < \lfloor k/2 \rfloor$, the map $\rD^\alpha \partial_r^i \tilde \sV$ is continuous; and there exists a constant $C$ (depending only on $(b, \sigma, \tilde \cU, k)$) such that for all $(t, r) \in \Delta^2; \mu, \mu' \in \sP_2 (\bR)$ and $v, v' \in \bR^p$:
\begin{equation} \label{cal:master-PDE:prf:stability-bound:2:a}
\begin{aligned}
| \rD^\alpha \partial_r^i \tilde \sV (t, r, \mu; v ) | & \le C , \\
| \rD^\alpha \partial_r^i \tilde \sV (t, r, \mu; v ) - \rD^\alpha \partial_r^i \tilde \sV (t, r, \mu'; v' ) | & \le C \{ |v-v'| + \sW_2(\mu, \mu') \} .
\end{aligned}
\end{equation}
\end{itemize}

The base case $i=1$ is already proved. Let $i \in \bN^*$ with $i < \lfloor k/2 \rfloor$. Assume that the claim holds for all $j \in \llbracket 1, i \rrbracket$. We will prove that the claim holds for $i+1$.

\begin{itemize}
\item By induction hypothesis, $\mu \mapsto \partial_r^{i} \tilde \sV (t, r, \mu)$ exists and is of class $\sM^2_b$ with the constant in \zcref{cal:space1:ineq1} uniform in $r$.

\item By induction hypothesis and \zcref{cal:Clairaut-lm}(3), we get for $j \in \llbracket 0, i-1 \rrbracket$ that $r \mapsto \partial_\mu \partial_r^j \tilde \sV  (t, r, \mu, v)$ and $r \mapsto \partial_v \partial_\mu \partial_r^j \tilde \sV  (t, r, \mu, v)$ are differentiable; and
\begin{align}
\partial_r \partial_\mu \partial_r^j \tilde \sV  (t, r, \mu, v) & = \partial_\mu \partial_r^{j+1} \tilde \sV  (t, r, \mu, v)  , \\
\partial_r \partial_v \partial_\mu \partial_r^j \tilde \sV  (t, r, \mu, v) & = \partial_v \partial_\mu \partial_r^{j+1} \tilde \sV  (t, r, \mu, v) .
\end{align}
\end{itemize}

This allows us to take the derivative w.r.t $r$ in \zcref{cal:master-PDE:prf:recur-time-diff:2:a}. Then $r \mapsto \tilde \sV (t, r, \mu)$ is $(i+1)$-th times differentiable. By the product rule of differentiation applied to \zcref{cal:master-PDE:prf:recur-time-diff:2:a} and as in the proof of binomial theorem, we obtain
\begin{equation} \label{cal:master-PDE:prf:recur-time-diff:4:a}
- \partial_r^{i+1} \tilde \sV  (t, r, \mu) = \begin{myaligned}[t]
& \sum_{j=0}^{i} \begin{pmatrix} i \\ j \end{pmatrix} \bE \Big [ \partial_r^{i-j} b (r, Z, \mu) \partial_\mu \partial_r^{j} \tilde \sV  (t, r, \mu; Z) \\
& + \frac{1}{2} \partial_r^{i-j} \sigma^2 (r, Z, \mu) \partial_v \partial_\mu \partial_r^{j} \tilde \sV  (t, r, \mu; Z) \Big ] ,
\end{myaligned}
\end{equation}
where $\lbrbrak Z \rbrbrak = \mu$. In particular, $- \partial_r^{i+1} \tilde \sV  (t, r, \mu)$ has the desired form. By induction hypothesis, $(t, \mu) \mapsto \partial_r^{j} \tilde \sV  (t, r, \mu)$ is of class $\sM^{k-2j}_b$ for every $j \in \llbracket 0, i \rrbracket$. Notice that $(r, v, \mu) \mapsto \partial_r^{i-j} b (r, v, \mu)$ and $(r, v, \mu) \mapsto \partial_r^{i-j} \sigma^2 (r, v, \mu)$ are of class $\sM^{k-2(i-j)}_b$ for every $j \in \llbracket 0, i \rrbracket$. This combined with \zcref{cal:exp}[(1) and (3)] and \zcref{cal:master-PDE:prf:recur-time-diff:4:a} implies the required smoothness of $(t, \mu) \mapsto \partial_r^{i+1} \tilde \sV  (t, r, \mu)$. Therefore, the claim holds for $i+1$. Hence $\tilde \sV$ is of class $\sM^k_b$. This completes the proof.
\end{enumerate}

\printbibliography
\end{document}